\documentclass[a4paper,UKenglish,cleveref, autoref, thm-restate]{lipics-v2021}

\hideLIPIcs  %

\title{On Occurrence-Preserving Morphisms} %

\author{Kaisei Kishi}
{
Department of Information Science and Technology, 
Kyushu University,
Fukuoka, Japan 
}{kishi.kaisei.216@s.kyushu-u.ac.jp}{}{}

\newcommand{\isct}{%
M\&D Data Science Center, Institute of Integrated Research, 
Institute of Science Tokyo, Japan
}

\author{Peaker Guo}
{\isct}
{peakerguo@gmail.com}
{https://orcid.org/0000-0002-9098-1783}{}

\author{Cristian Urbina}
{Faculty of Mathematics, Informatics and Mechanics, University of Warsaw, Warsaw, Poland \and Center for Biotechnology and Bioengineering (CeBiB), Santiago, Chile}
{crurbina1997@gmail.com}
{https://orcid.org/0000-0001-8979-9055}{Polish National Science Center, grant no. 2022/46/E/ST6/00463; Basal Funds FB0001 and AFB240001, ANID, Chile; and FONDECYT Project 1-230755, ANID, Chile.}

\author{Hideo Bannai}{\isct}
{hdbn.dsc@tmd.ac.jp}
{https://orcid.org/0000-0002-6856-5185}{JSPS KAKENHI Grant Number JP24K02899}

\authorrunning{K. Kishi, P. Guo, C. Urbina, and H. Bannai} 

\Copyright{Kaisei Kishi, Peaker Guo, Cristian Urbina, and Hideo Bannai}

\ccsdesc[100]{
Mathematics of computing  $\rightarrow$ Combinatorics on words,
Theory of computation $\rightarrow$ Design and analysis of algorithms
} %

\keywords{
Property-preserving morphisms, interference-free morphisms, recognizable morphisms, injective morphisms, Fibonacci words, Thue-Morse words, minimal unique substrings (MUSs), net occurrences
} %

\category{} %

\relatedversion{} %

\nolinenumbers %

\EventEditors{Philip Bille and Nicola Prezza}
\EventNoEds{2}
\EventLongTitle{37th Annual Symposium on Combinatorial Pattern Matching (CPM 2026)}
\EventShortTitle{CPM 2026}
\EventAcronym{CPM}
\EventYear{2026}
\EventDate{June 15--17, 2026}
\EventLocation{Copenhagen, Denmark}
\EventLogo{}
\SeriesVolume{369}
\ArticleNo{22}

\usepackage{amsmath}
\usepackage{amssymb}
\usepackage{mathtools}
\usepackage[noadjust]{cite}
\usepackage{caption}

\usepackage[capitalise, noabbrev]{cleveref}
\usepackage[dvipsnames]{xcolor}

\usepackage{xspace}
\xspaceaddexceptions{]\}}

\renewcommand{\a}{\mathtt{a}}
\renewcommand{\b}{\mathtt{b}}
\renewcommand{\c}{\mathtt{c}}

\newcommand{\tm}{\ensuremath{\mathcal{T}}\xspace} 
\newcommand{\ltm}{\ensuremath{\tau}\xspace}
\newcommand{\flip}[1]{\ensuremath{\overline{#1}}\xspace}

\NewDocumentCommand{\occ}{m o}{%
  \IfNoValueTF{#2}
    {\ensuremath{\operatorname{\#occ}(#1)}}
    {\ensuremath{\operatorname{\#occ}_{#2}(#1)}}
}

\NewDocumentCommand{\Occ}{m o}{%
  \IfNoValueTF{#2}
    {\ensuremath{\operatorname{occ}(#1)}}
    {\ensuremath{\operatorname{occ}_{#2}(#1)}}
}

\newcommand{\mus}[1]{\ensuremath{\textsf{MUS}\left(#1\right)}\xspace}

\newcommand{\ext}[1]{\ensuremath{\textsf{EXT}\left(#1\right)}\xspace}

\newcommand{\no}[1]{\ensuremath{\textsf{NO}\left(#1\right)}\xspace}

\newcommand{\IF}{interference-free\xspace}
\newcommand{\SIF}{strongly interference-free\xspace}

\newcommand{\images}[1]{\ensuremath{\textsf{Im}\left(#1\right)}\xspace}

\newcommand{\pref}[1]{\ensuremath{#1^\mathit{pref}}\xspace}
\newcommand{\suf}[1]{\ensuremath{#1^\mathit{suf}}\xspace}

\usepackage[linesnumbered, ruled, vlined]{algorithm2e}

\SetKwInOut{Input}{Input}
\SetKwInOut{Output}{Output}
\SetKw{Not}{not}

\SetCommentSty{commentfont}
\DontPrintSemicolon

\newcommand{\str}[1]{\ensuremath{\textsl{str}(#1)}\xspace}
\newcommand{\ppref}{\ensuremath{P_{\textsl{pref}}}\xspace}
\newcommand{\psuf}{\ensuremath{P_{\textsl{suf}}}\xspace}

\newcommand{\duetopagelimit}[1]{#1}

\begin{document}

\maketitle

\begin{abstract}
A \emph{morphism} is a mapping that transforms words through letter-wise substitution, 
where each symbol is consistently replaced by a fixed word.
In the field of combinatorics on words,
one topic that has attracted considerable attention 
is the characterization of morphisms that preserve specific properties,
such as overlap-freeness, square-freeness, lexicographic order, and primitivity. 
Continuing this direction, we initiate the study on \emph{occurrence-preserving morphisms},
which address the following fundamental question: 
given a morphism $\phi$, two words $u$ and $v$, and $k \geq 1$,
under what conditions does the number of occurrences of $u$ in $v$ equal 
the number of occurrences of  $\phi^k(u)$ in  $\phi^k(v)$?
To answer this question, we introduce the notion of \emph{interference-free morphisms},
examine their properties, 
develop an efficient algorithm for deciding interference-freeness,
and uncover a connection to \emph{recognizable morphisms}.
We then present a precise characterization of occurrence-preserving morphisms in terms of interference-freeness.
As applications of our characterization, 
we first show that there exists a bijection between the starting positions of the occurrences of $u$ in $v$ and those of $\phi^k(u)$ in $\phi^k(v)$.
We then apply the characterization to the Fibonacci and Thue-Morse words
to identify their \emph{minimal unique substrings~(MUSs)}.
Finally, we exploit the connection between MUSs and \emph{net occurrences} to simplify existing proofs on net occurrences in these words.
\end{abstract}

\clearpage
\setcounter{page}{1}

\section{Introduction}

A morphism is a structure-preserving mapping, 
where the preserved structure is concatenation in the context of combinatorics on words.
More precisely, it is a mapping that transforms words through letter-wise substitution, with each symbol consistently replaced by a fixed image word.
Morphisms are fundamental objects that 
have been used to 
define infinite sequences and generate repetitive patterns \cite{book/Rozenberg1976,journal/tcs/2025/navarrourbina}.
Notable classes of morphisms include 
\emph{injective morphisms}, which play an important role in coding theory, and
\emph{recognizable morphisms},
which have been extensively studied in dynamical systems and formal language theory~\cite{journal/etds/2023/beal}.
More recently, the study of morphisms in relation to repetitiveness measures has gained considerable attention~\cite{conf/dlt/2022/frosini, conf/mfcs/2025/FRSU, conf/cpm/2023/fici, conf/iwoca/2019/brlek}.

A recurring theme in the literature on morphisms 
is identifying  properties that 
remain preserved under repeated applications of a morphism.
Such studies are particularly useful for generating infinite words that
guarantee desired combinatorial properties.
For example, Berstel and S{\'{e}}{\'{e}}bold~\cite{journal/dam/1993/berstel} showed that 
a morphism $h$ maps overlap-free words to overlap-free words
if and only if $h(\texttt{abbabaab})$ is overlap-free.
Subsequent work on overlap-free morphisms includes~\cite{journal/dam/1999/richomme, journals/dam/2004/richomme}.
Other morphism-preserved properties have also been studied, such as
lexicographic order~\cite{journal/dmtcs/2007/richomme},
palindromic richness~\cite{journal/fuin/2022/dolce},
Abelian power-freeness~\cite{journal/ijac/1993/carpi, journal/tcs/2009/keranen},
square-freeness~\cite{hsiao2003square},
primitivity~\cite{journal/afp/2023/holub,journal/dm/2008/huang},
BWT runs~\cite{conf/mfcs/2025/FRSU}, and
Sturmian words~\cite{conf/mfcs/1993/berstel}.
Continuing this line of research, we initiate the study of \emph{occurrence-preserving morphisms},
which address the following fundamental question: 
given a morphism $\phi$, two words $u$ and $v$, and $k \geq 1$,
\begin{center}
\begin{minipage}{0.7\textwidth}
\centering
\emph{
under what conditions does the number of occurrences of $u$ in $v$ equal 
the number of occurrences of $\phi^k(u)$ in $\phi^k(v)$?
}
\end{minipage}
\end{center}

Understanding this question has direct applications in 
proving properties that are constrained by the number of occurrences of factors (substrings),
such as \emph{minimal unique substrings (MUSs)} and \emph{net occurrences}.
A MUS is a unique substring whose every proper substring is repeated,
whereas  a net occurrence is an occurrence of a repeated substring whose every proper super-string is unique.
Both notions have been extensively studied for their combinatorial properties and efficient algorithms: see, for example,
\cite{journal/fuin/2011/ilie, journal/algorithmica/2022/mieno, conf/cpm/2017/mieno, conf/mfcs/2016/mieno} for MUSs,
and \cite{conf/cpm/2024/guo, conf/spire/2024/ohlebusch, conf/spire/2024/guo, DBLP:journals/corr/abs-2410-06837, conf/cpm/2025/mieno, conf/cpm/2026/KI} for net occurrences.

\subparagraph*{Our results.}
In this work, we make three main contributions.
First, to answer the above question, we introduce the notion of \emph{interference-free morphisms},
analyze their properties, 
develop an efficient algorithm for deciding interference-freeness,
and uncover a connection to recognizable morphisms  (\cref{thm:if-implies-rec}).
Second, we  provide a precise characterization of occurrence-preserving morphisms in terms of interference-freeness (\cref{thm:occ-preserve}).
Third, as an application, we apply this characterization to the Fibonacci and Thue-Morse words
to identify their \emph{minimal unique substrings~(MUSs)} (\cref{thm:mus-fib} and \cref{thm:mus-tm}).
We further exploit the connection between MUSs and \emph{net occurrences}~\cite{conf/cpm/2025/mieno} to simplify existing proofs on net occurrences in these words~\cite{conf/cpm/2025/guo}.
In the process, we establish new properties of these
morphisms and  words that may have independent interest.

\section{Preliminaries}\label{sec:preliminaries}

\subparagraph*{Basics.}
Let $\Sigma$ be an ordered alphabet.
We assume $\Sigma = \{ \a, \b\}$ when $|\Sigma|=2$.
A \emph{word (or string)} is an element of $\Sigma^*$. 
The length of a word $w$ is denoted as $|w|$.
Let $\varepsilon$ denote the \emph{empty word} of length 0.
We use $w[i]$ to denote the $i^{\text{th}}$ character of a word $w$.
Let $u \cdot v = u \ v$ denote the \emph{concatenation} of two words,~$u$ and~$v$.
A \emph{factor} (or \emph{substring}) of a word~$w$ of length $n$, 
starting at position~$i$ 
and ending at position~$j$, is written as $w[i \ldots j]$. 
A factor $w[1\ldots j]$ is called a \emph{prefix} of~$w$, 
while $w[i \ldots n]$ is called a \emph{suffix} of $w$.
A factor $u$ of $w$ is a \emph{proper} factor if $u \neq w$.
For two words $u$ and $w$,
let $\Occ{u}[w] = \{ i \; | \; w[i \ldots i+|u|-1] = u \}$ be the set of (starting positions of) \emph{occurrences} of $u$ in $w$,
and let $\occ{u}[w] = |\Occ{u}[w]|$.
When convenient, we identify an occurrence $i \in \Occ{u}[w]$ with its corresponding interval $[i, i+|u|-1]$ in $w$.
For convenience, we treat the empty word $\varepsilon$ as occurring $|w| + 1$ times in $w$: 
before position $1$, between positions $i$ and $i + 1$ for $1 \leq i \leq |w|$, and after position $|w|$. 
We represent the $i^\text{th}$ such occurrence by the interval $[i, i-1]$ for $1 \leq i \leq |w|+1$.
For a word $w$, let $w^R = w[|w|]\cdots w[1] $ denote the reverse of $w$.
For a non-empty word $w$, 
a sequence of non-empty words $(  x_k )^m_{k=1}$ %
is referred to as a \emph{factorization} of $w$ if
$w = x_1 \cdots x_m $.
For a word $w$, a word of the form $w[i \ldots |w|] \cdot w[1 \ldots i-1]$, for some $1 \leq i \leq |w|$,
is called a \emph{rotation} of $w$.  
Let $\mathcal{R}(w)$ denote the multiset of all $|w|$ rotations of $w$.
For a word $w$ with $|w| \geq 2$, let $w^\lhd = w[1 \ldots |w|-1]$ denote its longest proper prefix.

\subparagraph*{Morphisms.}
Let $\Sigma$ and $\Gamma$ be two alphabets.
A \emph{morphism} is a map $\phi : \Sigma^* \to \Gamma^*$ such that
$\phi(uv) = \phi(u) \ \phi(v)$ for all $u,v \in \Sigma^*$.
For each $c \in \Sigma$, $\phi(c)$ is called an \emph{image}\footnote{
In this paper, we use the term \emph{image} only for $\phi(c)$ with $c\in \Sigma$,
and not for $\phi(u)$ with $u \in \Sigma^*$.
} of $\phi$,
and let $\images{\phi} = \{ \phi(c) \; | \; c \in \Sigma \}$ be the set of images of $\phi$.
Letting $\Sigma = \{\alpha_1, \ldots, \alpha_{\sigma}\}$,
$\phi$ can be equivalently specified by 
$\alpha_1 \mapsto \phi(\alpha_1), \ldots, \alpha_{\sigma} \mapsto \phi(\alpha_{\sigma})$.
Let $\phi^0(u) = u$ and let $\phi^i(u) = \phi(\phi^{i-1}(u))$ for each $i \geq 1$.
Further,
$\phi$ is \emph{non-erasing} if $\phi(a) \neq \varepsilon$ for all $a \in \Sigma$;
$\phi$ is \emph{injective}\footnote{
Note that this defines a stronger notion of injectivity than simply requiring that
for all $c \neq c' \in \Sigma$, $\phi(c) \neq \phi(c')$.
} 
if $\phi(u) \neq \phi(v)$ for all $u \neq v \in \Sigma^*$;
$\phi$ is \emph{$\ell$-uniform} if $|\phi(a)| = \ell$ for all $a \in \Sigma$.
For a binary word $w$,
let $\flip{w}$ denote the word obtained by
applying the morphism $\a \mapsto \b, \b \mapsto \a$.

\subparagraph*{Fibonacci and Thue-Morse morphisms and words.}
Let $\varphi$ denote the \emph{Fibonacci morphism} defined by
$\varphi(\texttt{a}) = \texttt{ab}$ and $\varphi(\texttt{b}) = \texttt{a}$.
Let $\mu$ denote the \emph{Thue-Morse morphism} defined by
$\mu(\texttt{a}) = \texttt{ab}$ and $\mu(\texttt{b}) = \texttt{ba}$.
For each $i \geq 1$,
let $F_i = \varphi^{i-1}(\texttt{b})$ be the (finite) \emph{Fibonacci word of order $i$}.
For each $i \geq 1$,
let $\tm_i = \mu^{i-1}(\texttt{a})$ be the (finite) \emph{Thue-Morse word of order $i$}.
The Fibonacci and Thue-Morse words can also be obtained as follows:
$F_1 = \texttt{b}, F_2 = \texttt{a}$, and 
$F_i = F_{i-1} \ F_{i-2}$ for each $i \geq 3$;
$\tm_1 = \texttt{a}$ and
$\tm_i = \tm_{i-1} \ \flip{ \tm_{i-1}}$ for each $i \geq 2$.
Further, for each $i \geq 1$, let $f_i = |F_i|$, which equals the $i^{\text{th}}$ Fibonacci number,
and let $\ltm_i = |\tm_i| = 2^{i-1}$.
We next review the following known properties of Fibonacci words.

\begin{observation}[\cite{journal/tit/2021/navarro, conf/cpm/2025/guo}]\label{obs:fib-properties}
The following hold for $F_i$.
\begin{itemize}
\item Neither \texttt{aaa} nor \texttt{bb} occurs in any $F_i$.
\item 
Let $\Delta_0 = \texttt{ba}$, $\Delta_1 = \texttt{ab}$, 
and $\Delta_i = \Delta_{(i \bmod 2)}$ for $i \geq 2$. 
Define $G_i = F_i[1 \ldots f_i-2]$ for each $i \geq 3$.
Then, $F_{i} = G_{i} \ \Delta_i$
and $\Occ{G_{i-1}}[F_i] = \{ 1, f_{i-2}+1 \}$ for each $i \geq 7$.
\end{itemize}
\end{observation}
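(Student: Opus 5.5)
The two items of the observation are independent, so I would prove them separately by induction on $i$, using only $F_i = F_{i-1} F_{i-2}$ and $F_i = \varphi^{i-1}(\b)$.

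\emph{First item.} I would argue through the morphism. Every image of $\varphi$ begins with $\a$, so in $\varphi(w)$ every $\b$ is immediately followed by $\a$ (or is the last letter). Hence $\b\b$ never occurs in $\varphi(w)$, and so in no $F_i = \varphi(F_{i-1})$ for $i \ge 2$; $F_1 = \b$ is trivial. For $\a\a\a$: an $\a\a$ in $\varphi(w)$ can only arise as $\varphi(\b)\varphi(\cdot)$, since $\varphi(\a) = \a\b$. Therefore $\a\a\a$ would need $\varphi(\b)\varphi(\b)$, i.e.\ an occurrence of $\b\b$ in $F_{i-1}$, contradicting the first part. An alternative check uses the recurrence: junctions $F_{i-1}F_{i-2}$ only glue a suffix $\Delta$ to a prefix $\a\b$ or $\a\a\b$, which is a finite case check.

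\emph{Second item, decomposition $F_i = G_i\,\Delta_i$.} By definition $G_i$ is $F_i$ minus its last two letters, so the claim amounts to saying that $F_i$ ends with $\Delta_i$. I would induct: $F_3 = \a\b$ and $F_4 = \a\b\a$ have suffixes $\Delta_1 = \a\b$ and $\Delta_0 = \b\a$. For $i \ge 5$, $F_i$ ends with $F_{i-2}$, and $|F_{i-2}| \ge 2$, so the suffix of $F_i$ equals that of $F_{i-2}$, which is $\Delta_{i-2} = \Delta_i$.

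\emph{Second item, occurrence set.} This is the main work. First, $1 \in \Occ{G_{i-1}}[F_i]$ trivially, since $F_i = F_{i-1}F_{i-2}$ and $G_{i-1}$ is a prefix of $F_{i-1}$. For $f_{i-2}+1$, I would use $F_i = F_{i-2}F_{i-3}F_{i-2}$ together with the classical near-commutation $F_{i-2}F_{i-3} = G\,\Delta$ and $F_{i-3}F_{i-2} = G\,\Delta'$, where $G$ is the common prefix of length $f_{i-1}-2$ and $\{\Delta, \Delta'\} = \{\a\b, \b\a\}$. Applied here, this shows that $F_{i-3}F_{i-2}$, which starts at position $f_{i-2}+1$, has $G_{i-1}$ as a prefix. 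This identity itself follows by a short induction from the recurrence.

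The hard part is excluding every other position $p$ with $1 < p \le f_i - f_{i-1} + 2 = f_{i-2}+2$. Only $p = 2, \ldots, f_{i-2}$ and $p = f_{i-2}+2$ remain.
\begin{itemize}
\item If $G_{i-1}$ occurred at such a $p$, then $G_{i-1}$, of length $f_{i-1}-2$, would overlap its occurrence at $1$ or at $f_{i-2}+1$ with shift $d$, where $1 \le d \le f_{i-2}-1$, or $d = 1$.
\item Such an overlap gives $G_{i-1}$ a period $d$, or a period $f_{i-2}-d+\ldots$.
\item But $G_{i-1}$ is known to have periods $f_{i-2}$ and $f_{i-3}$. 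Since $|G_{i-1}| = f_{i-1}-2 \ge d + f_{i-2} - \gcd$, Fine–Wilf yields a small period $\gcd(d, f_{i-2})$ or $\gcd(d, f_{i-3})$.
\item Iterating this down the Fibonacci periods forces $G_{i-1}$ to have a period that is not a Fibonacci number, or period $1$ or $2$.
\end{itemize}
That contradicts the first item, since no $\a\a\a$ or $\b\b$ can occur, given that $G_{i-1}$ for $i \ge 7$ contains $\a\b\a\a\b$. The condition $i \ge 7$ is what makes the lengths large enough for Fine–Wilf.

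I expect this period bookkeeping to be the main obstacle. As a fallback, I would use the fact that $F_i$ is a factor of the Sturmian Fibonacci word, where each length-$n$ factor's right-special structure is known, and count occurrences directly.
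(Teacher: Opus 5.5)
Your proofs of the first item, of $F_i = G_i\,\Delta_i$, and of the two occurrences at $1$ and $f_{i-2}+1$ (via near-commutation) are correct. The gap is in excluding every other position, which is the only substantial part. The paper just cites this observation, so your argument has to carry it on its own.

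First, your range is off by one. Since $|G_{i-1}| = f_{i-1}-2$, start positions run up to $f_i-(f_{i-1}-2)+1 = f_{i-2}+3$, and you never treat $p=f_{i-2}+3$. That is exactly where the statement breaks for $i=6$: $G_5=\a\b\a$ occurs in $F_6=\a\b\a\a\b\a\b\a$ at positions $1,4,6$, and $6=f_4+3$. So the hypothesis $i\ge 7$ is not what makes Fine--Wilf applicable. It ensures $|G_{i-1}|\ge 4$, so that $G_{i-1}$ begins with $\a\b\a\a$. Then it cannot have period $2$ (shift $2$ from the occurrence at $f_{i-2}+1$) or period $1$ (shift $1$). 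The contradiction there comes from $G_{i-1}$ containing $\a\a$ and $\a\b$, not from the first item: $\a\b\a\b\a$ has period $2$ but contains neither $\a\a\a$ nor $\b\b$.

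Second, the Fine--Wilf step for $2\le p\le f_{i-2}$ does not go through as written. The inequality $f_{i-1}-2\ge d+f_{i-2}-\gcd(d,f_{i-2})$ is false for most admissible $d$; for $i=8$, $d=7$ it reads $11\ge 14$. Pairing $d$ with $f_{i-3}$ instead does yield the period $\gcd(d,f_{i-3})$ and then, together with $f_{i-2}$, period $1$. The exception is $d=f_{i-3}$: the gcd is then $f_{i-3}$, a period $G_{i-1}$ already has. A single overlap with the occurrence at $1$ is then fully consistent with every known period, so $p=f_{i-3}+1$ is not excluded. Also, \emph{a period that is not a Fibonacci number} is no contradiction in itself.

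The repair is to use both overlaps at once. An occurrence at $2\le p\le f_{i-2}$ overlaps those at $1$ and at $f_{i-2}+1$, giving periods $p-1$ and $f_{i-2}+1-p$. Their sum $f_{i-2}$ is at most $|G_{i-1}|$, so Fine--Wilf gives the period $g=\gcd(p-1,f_{i-2})$, a proper divisor of $f_{i-2}$. Since $F_{i-2}$ is a prefix of $G_{i-1}$, it would be a proper power. That is impossible because $F_{i-2}$ has $f_{i-3}$ occurrences of $\a$ and $f_{i-4}$ of $\b$, which are coprime. Alternatively, combine $g$ with the period $f_{i-3}$ to get period $1$.

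Finally, the periods $f_{i-2}$ and $f_{i-3}$ of $G_{i-1}$ that you call known deserve a line of proof. They follow because $G_{i-1}$ is a common prefix of $F_{i-2}F_{i-3}$ and $F_{i-3}F_{i-2}$.
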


\subparagraph*{MUSs and net occurrences.}
Consider a string $w$ and a unique substring $u$ of $w$.
Let $[i, j]$ be the only occurrence of $u$ in $w$.
We say $u$ is a \emph{minimal unique substring (MUS)} of $w$
if both strings $w[i+1 \ldots j]$ and $w[i \ldots j-1]$ are repeated in $w$.
Let $\mus{w}$ denote the set of MUSs of $w$.
An occurrence $[i, j]$ in $w$ is a \emph{net occurrence} if
the corresponding string 
$w[i   \ldots j  ]$ is repeated,
while both left extension 
$w[i-1 \ldots j  ]$ and right extension
$w[i   \ldots j+1]$ are unique.
When $i=1$, $w[i-1 \ldots j  ] $ is assumed to be unique;
when $j=|w|$, $w[i   \ldots j+1] $ is assumed to be unique.
Let $\no{w}$ denote the set of net occurrences in $w$.

\section{Interference-Free Morphisms}\label{sec:if-morphisms}

To fully characterize occurrence-preserving morphisms,
we first introduce the notion of \emph{\IF} morphisms and establish some of their key properties in this section.
To motivate the definition, we begin with an example illustrating that,
after applying a morphism $\phi$ to words $u$ and $v$,
the occurrences of $u$ in $v$ may fail to be preserved
when certain forms of ``interference'' take place between $\phi(u)$ and images of $\phi$.

\begin{example}\label{example:fib-tm-morphisms}
In (1) and (2) of \cref{fig:abaab-example},
we have $\occ{u}[v] = \occ{\phi(u)}[\phi(v)] = 2$,
and $\occ{u}[v] = 2 < 3 = \occ{\phi(u)}[\phi(v)]$.
A third occurrence of $\phi(u)$ (underlined in red in (2)) emerges in $\phi(v)$
because $\phi(u) = \a\b \cdot \a$ and $\a$ is a proper prefix of $\phi(\a) = \a\b$.
In (3) and (4) of  \cref{fig:abaab-example},
we have $\occ{u}[v] = 1 < 2  = \occ{\phi(u)}[\phi(v)]$,
and $\occ{u}[v] =  \occ{\phi(u)}[\phi(v)] = 1$.
A second occurrence of $\phi(u)$ (underlined in red in (3)) emerges in $\phi(v)$ 
because $\phi(u) = \a \cdot \b \a \cdot \b$, $\a$ is a proper suffix of $\phi(\b) = \b \a$, and $\b$ is a proper prefix of $\phi(\b) = \b\a$. 
\end{example}

\begin{figure}[t]
\centering
\includegraphics[width=0.9\linewidth]{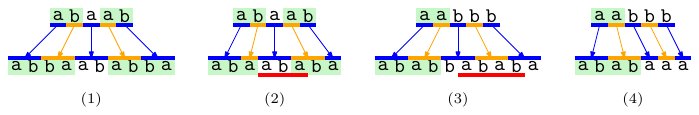}
\caption{
Illustration of \cref{example:fib-tm-morphisms}.
In each of (1)--(4), we show $v$ at the top, $\phi(v)$ at the bottom, 
and in the middle, arrows indicate the mapping from each $v[i]$ to $\phi(v[i])$;
the substring $u$ in $v$, and the corresponding $\phi(u)$ in $\phi(v)$, are highlighted in green.
Specifically, 
in (1) and (2), $u = \a \b$ and $v = \texttt{abaab}$, whereas
in (3) and (4), $u = \a \a$ and $v = \texttt{aabbb}$.
Moreover, $\phi: \a \mapsto \a\b, \b \mapsto \b\a$  %
in (1) and (3);
$\phi: \a \mapsto \a\b, \b \mapsto \a$ %
in (2) and (4).
}
\label{fig:abaab-example}
\end{figure}

To start formalizing the idea from the motivating example, we first define the following
and review a key property of injective morphisms. 

\begin{definition}[Image Factorizations]
Let $\phi: \Sigma^* \to \Gamma^*$ be a morphism and let $w \in \Gamma^*$ be a word.
We say $w$ admits a \emph{$\phi$-image factorization} if
$w = X_1 \cdots X_n$, where each $X_i \in \images{\phi}$.
When the morphism is clear from context, we simply say that $w$ admits an \emph{image factorization}.
\end{definition}

\begin{lemma}[\cite{book/daglib/0025093}]\label{lem:inj-uniq-fac}
Let $\phi : \Sigma^* \to \Gamma^*$ be a  morphism and let $u \in \Sigma^*$ be a word.
If $\phi$ is injective, then $\phi(u)$ admits a unique image factorization.
\end{lemma}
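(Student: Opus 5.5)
Existence comes straight from the definitions. Write $u = c_1 c_2 \cdots c_m$ with each $c_j \in \Sigma$. Since $\phi$ is a morphism, $\phi(u) = \phi(c_1)\phi(c_2)\cdots\phi(c_m)$, and each $\phi(c_j) \in \images{\phi}$. This is an image factorization of $\phi(u)$. (When $u=\varepsilon$ we get the empty factorization, which is trivially unique, so assume $m \ge 1$.) The real content is uniqueness.

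For uniqueness, take an arbitrary image factorization $\phi(u) = X_1 \cdots X_n$ with $X_i \in \images{\phi}$. For each $i$ choose a letter $d_i \in \Sigma$ with $\phi(d_i) = X_i$, and set $u' = d_1 \cdots d_n$. Then $\phi(u') = X_1\cdots X_n = \phi(u)$, so injectivity of $\phi$ gives $u' = u$. Hence $n = m$ and $d_i = c_i$ for all $i$, and therefore $X_i = \phi(d_i) = \phi(c_i)$. Every image factorization of $\phi(u)$ thus coincides with the canonical one, term by term.

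Two points need care.

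\begin{itemize}
\item The letter $d_i$ may not be uniquely determined by $X_i$ a priori. This does not matter: any choice works, and in fact injectivity on single letters forces the choice to be unique.
\item If $\phi$ were erasing, some image could equal $\varepsilon$ and factorizations could absorb arbitrary copies of it. This case is also excluded by injectivity, since $\phi(c)=\varepsilon$ would give $\phi(cc)=\phi(c)$ with $cc \neq c$. So all images are non-empty, consistent with the paper's convention that factorization terms are non-empty.
\end{itemize}

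I expect no real obstacle. The only subtle step is being careful that ``unique factorization'' means uniqueness of the sequence $(X_i)$, and the argument above establishes exactly that.
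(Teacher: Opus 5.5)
Your proof is correct and is essentially the standard argument. The paper gives no proof of its own and only cites Berstel, Perrin, and Reutenauer, where this same lifting argument is what shows that the image set of an injective morphism is a code: choose a preimage letter $d_i$ for each factor $X_i$, note that $u' = d_1 \cdots d_n$ satisfies $\phi(u') = \phi(u)$, and invoke injectivity. Your side remarks (preimage letters are unique, and injectivity forces $\phi$ to be non-erasing) are correct but not needed, since the lifting argument alone already gives $n = m$ and $X_i = \phi(c_i)$, including when $u = \varepsilon$.
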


\duetopagelimit{
A set $X$ of words is a \emph{code} if each non-empty word $w \in X^*$ admits a unique factorization into words of $X$.
The above lemma implies that if $\phi$ is injective, then $\images{\phi}$ forms a code.
}

We now give precise definitions of what was previously referred to as ``interference''.

\begin{definition}[Interfered Image Factorizations]
Let $\phi: \Sigma^* \to \Gamma^*$ be a morphism and let $w \in \Gamma^*$ be a word.
We say $w$ admits an \emph{interfered $\phi$-image factorization} 
if $w = x \cdot y \cdot z$, where
$x$ is a \emph{proper} suffix of some image in $\images{\phi}$,  
$y$ admits an image factorization, 
$z$ is a \emph{proper} prefix of some image in $\images{\phi}$, and
$x \cdot z \neq \varepsilon$.
When the morphism is clear from context, we simply say that $w$ admits an \emph{interfered image factorization}.
\end{definition}

In \cref{example:fib-tm-morphisms}, 
$\a\b \cdot \a$ is an interfered image factorization of $\varphi(\a\b)$ and 
$\a \cdot \b\a \cdot \b$ is an interfered image factorization of $\mu(\a\a)$.
Beyond the cases illustrated in \cref{example:fib-tm-morphisms} and formalized in \cref{def:if-morph}, another type of ``interference'' can occur. 
We give an example and then formalize this notion below.

\begin{example}\label{example:var-tm}
Let $\phi$ be a variant of the Thue-Morse morphism~\cite[A036577]{oeis} defined as 
 $\phi(\a) = \a\b\c$, $\phi(\b) = \a\c$, $\phi(\c) = \b$;
let $u = \c$, and $v = \c\a$.
Then, $\phi(u) = \b$, $\phi(v) = \b \a \b \c$, 
and $\occ{u}[v] = 1 <  2 = \occ{\phi(u)}[\phi(v)]$.
\end{example}

\begin{definition}[Inner Image Factor]
Let $\phi: \Sigma^* \to \Gamma^*$ be a morphism and let $w \in \Gamma^*$ be a word.
We say $w$ is an \emph{inner $\phi$-image factor}
if $w$ is a proper factor of an image $\phi(c)$ for some $c \in \Sigma$, but is neither a prefix nor a suffix of $\phi(c)$. 
When the morphism is clear from context, we simply say that $w$ is an \emph{inner image factor}.
\end{definition}

\noindent
In \cref{example:var-tm}, $\b$ is a proper factor of $\phi(\a) = \a\b\c$,
but is neither a prefix nor a suffix of $\a\b\c$.
Hence $\b$ is an inner image factor.
Having formalized the two types of ``interference'', we are now ready to define \IF morphisms.

\begin{definition}[Interference-Free Morphisms]\label{def:if-morph}
Let $\phi: \Sigma^* \to \Gamma^*$ be an injective morphism and 
let $\mathcal{L} \subseteq \Sigma^*$.
We say $\phi$ is \emph{interference-free on $\mathcal{L}$} 
if for every non-empty word $u \in \mathcal{L}$, 
\begin{itemize}
\item 
$\phi(u)$ does \emph{not} admit an interfered image factorization, and
\item 
$\phi(u)$ is \emph{not} an inner image factor.
\end{itemize}
If $\mathcal{L} = \Sigma^*$, we say that $\phi$ is \emph{strongly} interference-free.
\end{definition}

The definition is illustrated on the left of \cref{fig:inj-vs-if}.
(The case where $\phi(u)$ is an inner image factor can be regarded as an edge case 
that does not affect the intuition, and thus omitted from the figure.)
We now observe the following on the Fibonacci morphism.

\begin{observation}\label{obs:fib-not-if-odd}
$\varphi$ is \emph{not} \IF on $\{ F_i : i \geq 5 \text{~and~} i \text{~is odd} \}$
\end{observation}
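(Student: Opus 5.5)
To prove the observation it suffices to exhibit, for every odd $i \geq 5$, an interfered image factorization of $\varphi(F_i)$. The plan is to take $u = F_i$, which gives $\varphi(u) = \varphi(\varphi^{i-1}(\texttt{b})) = \varphi^{i}(\texttt{b}) = F_{i+1}$, and then split off a trailing $\texttt{a}$.

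First, \cref{def:if-morph} only applies to injective morphisms, so I check that $\varphi$ is injective. Every $\texttt{b}$ in $\varphi(w)$ comes from an image $\texttt{ab}$. So $\varphi(w)$ is parsed uniquely: each $\texttt{a}$ immediately followed by $\texttt{b}$ is read as $\varphi(\texttt{a})$, and each other $\texttt{a}$ is read as $\varphi(\texttt{b})$. Hence $\varphi(w)$ determines $w$.

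Second, I use the shape of $F_{i+1}$. Since $i$ is odd, $i+1 \geq 6$ is even. By \cref{obs:fib-properties}, $F_{i+1} = G_{i+1}\,\Delta_{i+1}$ with $\Delta_{i+1} = \Delta_0 = \texttt{ba}$, so $F_{i+1}$ ends in $\texttt{a}$. I would set $x = \varepsilon$, $z = \texttt{a}$ and $y = F_{i+1}[1 \ldots f_{i+1}-1]$. Then $z$ is a proper prefix of $\varphi(\texttt{a}) = \texttt{ab}$, and $x \cdot z \neq \varepsilon$.

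The main step is showing that $y$ admits a $\varphi$-image factorization over $\{\texttt{ab}, \texttt{a}\}$. Note that $y$ begins with $\texttt{a}$, since $F_{i+1} = \varphi^{i-1}(\texttt{a})$ and $\varphi(\texttt{a})$ starts with $\texttt{a}$. Also $y$ contains no $\texttt{bb}$, by \cref{obs:fib-properties}. Therefore every $\texttt{b}$ in $y$ is immediately preceded by an $\texttt{a}$. Scanning $y$ greedily, I cut each $\texttt{a}$ followed by $\texttt{b}$ as the block $\texttt{ab}$ and every other $\texttt{a}$ as the block $\texttt{a}$. This yields a factorization of $y$ into images.

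For example, with $i = 5$ we get $F_6 = \texttt{abaababa} = (\texttt{ab}\cdot\texttt{a}\cdot\texttt{ab}\cdot\texttt{ab})\cdot\texttt{a}$. So $\varphi(F_i) = x\,y\,z$ is an interfered image factorization, and $\varphi$ is not \IF on the stated set. I expect no real obstacle here; the only care needed is the parity bookkeeping that puts $\Delta_0 = \texttt{ba}$ at the end of $F_{i+1}$.
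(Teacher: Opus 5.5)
Your proof is correct and follows essentially the same route as the paper: both take $x = \varepsilon$, $z = \texttt{a}$ (the final letter of $F_{i+1}$, a proper prefix of $\varphi(\texttt{a})$), and $y$ the remaining prefix. The only difference is in how $y$ is shown to factor into images. The paper gets this for free as $y = \varphi(F_i[1 \ldots f_i-1])$, since the last image in the natural factorization of $\varphi(F_i)$ is $\varphi(\texttt{b}) = \texttt{a}$. You instead re-derive it by greedily parsing $y$ over $\{\texttt{ab}, \texttt{a}\}$, which is equally valid.
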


\begin{proof}
Let $n = |F_i|$ and 
consider the factorization of $\varphi(F_i)= F_{i+1} = X_1 \cdots X_n$
where each $X_j = \varphi(F_i[j]) \in \images{\varphi}$ for $1 \leq j \leq n$.
Note that $i+1$ is even. 
By \cref{obs:fib-properties}, $\a \b \a$ is a suffix of even Fibonacci words.
Thus, $X_{n-1} = \varphi(\a) = \a\b$ and $X_n = \varphi(\b) = \a$.
It follows that $\varphi(F_i)$ admits an interfered image factorization  
$\varphi(F_i)= x \cdot y \cdot z$ with $x = \varepsilon$,
$y = X_1 \ldots X_{n-1}$, and $z = X_n = \a$ being a proper prefix of $\varphi(\a) = \a\b$.
Therefore, $\varphi$ is not \IF on the desired set.
\end{proof}

\noindent
This, combined with
$\mu$  not being \IF on $\{ \a \a \}$ (\cref{example:fib-tm-morphisms}),
gives the following.

\begin{observation}\label{obs:fib-tm-not-sif}
The Fibonacci morphism $\varphi$ and Thue-Morse morphism $\mu$ are not \SIF.
\end{observation}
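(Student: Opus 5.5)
The claim is \cref{obs:fib-tm-not-sif}: neither $\varphi$ nor $\mu$ is \SIF. Since strong interference-freeness means interference-freeness on $\Sigma^*$, it suffices, for each morphism, to exhibit one non-empty word $u \in \Sigma^*$ such that $\phi(u)$ admits an interfered image factorization. Injectivity is not at issue here; the only requirement is that one of the two conditions of \cref{def:if-morph} fails for a single $u$.

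For $\varphi$, I will invoke \cref{obs:fib-not-if-odd}. Take $u = F_5 \in \Sigma^*$; then $\varphi(u)$ admits an interfered image factorization, so $\varphi$ is not \IF on $\Sigma^* \supseteq \{F_5\}$. For a self-contained witness, $u = \a\b$ also works and is the one from \cref{example:fib-tm-morphisms}. Here $\varphi(\a\b) = \a\b\cdot\a$. Take $x = \varepsilon$, $y = \a\b = \varphi(\a)$, and $z = \a$, which is a proper prefix of $\varphi(\a) = \a\b$. Then $x\cdot z \neq \varepsilon$, as required.

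For $\mu$, take $u = \a\a$, so $\mu(u) = \a\b\a\b = \a \cdot \b\a \cdot \b$. Here $x = \a$ is a proper suffix of $\mu(\b) = \b\a$, $y = \b\a = \mu(\b)$ admits an image factorization, and $z = \b$ is a proper prefix of $\mu(\b)$. So $\mu(\a\a)$ admits an interfered image factorization.

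There is essentially no obstacle. The only point to check is that the definition of \SIF presupposes injectivity, and both $\varphi$ and $\mu$ are injective: $\{\a\b, \b\a\}$ is a uniform code, and $\{\a\b, \a\}$ is a suffix code. So the failure comes from the interference condition alone, which the two witnesses above establish.
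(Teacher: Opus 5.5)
Your proof is correct and takes essentially the same approach as the paper, which gets the $\varphi$ case from \cref{obs:fib-not-if-odd} (your direct witness, the factorization $\a\b\cdot\a$ of $\varphi(\a\b)$, is also noted in the paper right after the definition) and the $\mu$ case from the factorization $\a\cdot\b\a\cdot\b$ of $\mu(\a\a)$ in \cref{example:fib-tm-morphisms}. Your explicit check that $\varphi$ and $\mu$ are injective is a small but worthwhile addition, since the definition of interference-freeness presupposes injectivity.
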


\subparagraph*{An algorithm for deciding interference-freeness.}

We next present an efficient algorithm for deciding interference-freeness.
At a high level, the algorithm employs a dynamic programming approach 
supported by an \emph{Aho-Corasick automaton}~\cite{journal/cacm/1975/aho}.
The proof  is included in Appendix~\ref{appendix:algo}.

\begin{theorem}\label{thm:if-algo}
Let $\phi : \Sigma^* \to \Gamma^*$ be an injective morphism
and let $u \in \Sigma^*$.
Define $m = \sum_{x \in \images{\phi}} |x|$,
$w = \phi(u)$, $n = |w|$, and
 $\mathit{occ} = \sum_{x \in \images{\phi}} \occ{x}[w]$.
Then, whether $\phi$ is \IF on $\{u\}$ 
can be decided in $O(m + n  + \mathit{occ})$ expected time.
\end{theorem}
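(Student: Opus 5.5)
We need to decide two conditions for $w=\phi(u)$: (i) whether $w$ admits an interfered image factorization $w = x\cdot y\cdot z$, and (ii) whether $w$ is an inner image factor. The plan is to handle (ii) by a direct search, and (i) by a left-to-right dynamic program over the positions of $w$, driven by an Aho--Corasick automaton built on $\images{\phi}$.

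\textbf{Condition (ii).} This is the easy case. If $n \ge \max_{x\in\images{\phi}}|x|$, then $w$ cannot be a proper factor of any image, so we answer ``no''. Otherwise we run a linear-time pattern matcher (e.g.\ KMP) with pattern $w$ on each image $\phi(c)$. We then check whether some occurrence starts at a position other than $1$ and ends at a position other than $|\phi(c)|$. The total cost is $O(m+n)$.

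\textbf{Condition (i): the candidate sets.} Let $S$ be the set of positions $i\in[0,n]$ such that $w[1\ldots i]$ is a proper suffix of some image; $\varepsilon$ counts, so $0\in S$. Let $P$ be the set of positions $j\in[0,n]$ such that $w[j+1\ldots n]$ is a proper prefix of some image; again $n\in P$. Then $w$ admits an interfered image factorization iff there exist $i\in S$ and $j\in P$ with $i\le j$, $w[i+1\ldots j]\in\images{\phi}^*$, and $(i,j)\ne(0,n)$. Note that the condition $(i,j)\ne(0,n)$ is exactly $x\cdot z\ne\varepsilon$.

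Both $S$ and $P$ are computed in $O(m+n)$ time:
\begin{itemize}
\item For $S$, build the suffix trie (or hash the Karp--Rabin fingerprints) of all proper suffixes of images. Only lengths up to the maximum image length matter, so at most $m$ candidate strings exist. Testing each prefix $w[1\ldots i]$ for membership by rolling fingerprint in a hash set gives $O(m+n)$ expected time.
\item $P$ is handled symmetrically, using fingerprints of proper prefixes of images.
\end{itemize}
The hashing is the source of the word ``expected'' in the bound.

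\textbf{Condition (i): the dynamic program.} We run the DP $D[j]$, defined as true iff there is some $i\in S$, $i\le j$, such that $w[i+1\ldots j]\in\images{\phi}^*$. We need two flags per position:
\begin{itemize}
\item one flag records the case $i=0$;
\item the other records whether some $i>0$ from $S$ was used.
\end{itemize}
This way we can exclude the forbidden pair $(0,n)$ while still permitting $i=0$ with $j<n$. The initialisation is $D[i]=$ true for $i\in S$. For the transitions, feed $w$ into the Aho--Corasick automaton of $\images{\phi}$. At position $j$, enumerate all images ending there by following output (dictionary-suffix) links. For each such occurrence $w[j-|x|+1\ldots j]=x$, set $D[j]\mathrel{|}= D[j-|x|]$, propagating the flags.

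Each reported occurrence costs $O(1)$, so this phase takes $O(m+n+\mathit{occ})$ time. Finally, we answer ``interfered'' iff some $j\in P$ has a valid flag combination.

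\textbf{Correctness and the main obstacle.} Correctness follows by induction on $j$, directly from the definitions. The main obstacle is keeping the enumeration of matches within $O(\mathit{occ})$ rather than $O(n\cdot|\images{\phi}|)$. The Aho--Corasick output links achieve this, since each reported match is an actual occurrence counted in $\mathit{occ}$. Injectivity is not needed for the algorithm itself; it only ensures the definition applies.
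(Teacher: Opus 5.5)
Your architecture matches the paper's: an Aho--Corasick automaton over $\images{\phi}$ drives a left-to-right dynamic program over the positions of $w$. It is seeded where a proper suffix of an image can end and accepts where a proper prefix of an image can start. Your two-flag device for excluding $(i,j)=(0,n)$ is a valid substitute for the paper's second call of \texttt{factorizable} on $(\phi^R,w^R)$.

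The step that does not deliver the claimed guarantee is the computation of $S$ and $P$.
\begin{itemize}
\item Karp--Rabin fingerprints can collide. A collision puts a spurious index into $S$ or $P$, and the DP may then report an interfered image factorization that does not exist. You therefore obtain a Monte Carlo procedure, correct only with high probability.
\item An expected-time decision bound means the answer is always correct and only the running time is random. Verifying each fingerprint match directly costs up to $\Theta(i)$ per index and can push the total to quadratic.
\item Your other option does not fit the budget either. The uncompacted trie of all proper suffixes of the images can have $\Theta(\sum_c|\phi(c)|^2)$ nodes, not $O(m)$.
\end{itemize}

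The paper's fix reuses the automaton you already build.
\begin{itemize}
\item After reading all of $w$, the current state spells the longest suffix of $w$ that is a prefix of an image.
\item Following failure links from that state enumerates exactly the suffixes of $w$ that are prefixes of images, in $O(\max_c|\phi(c)|)\subseteq O(m)$ steps. Those whose state has a child are proper prefixes, which gives $P$.
\item Running the automaton of $\{\phi(c)^R : c\in\Sigma\}$ over $w^R$ gives $S$ symmetrically.
\end{itemize}
With this change, the only randomness is in hashing the trie transitions, which affects the running time but not correctness. A generalized suffix tree of the images would also work.

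A minor remark on condition (ii): your test flags every $w$ with a strictly internal occurrence in some image. This is looser than the literal definition of an inner image factor; for example, $w=\a$ and $\phi(c)=\a\a\a$ is flagged, though $\a$ is a prefix of $\a\a\a$. Every such extra case is a proper prefix or suffix of an image, so (i) already catches it and the final decision is unaffected.
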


\subsection{Properties of Interference-Free Morphisms}

\begin{figure}[t]
\centering
\includegraphics[width=0.75\linewidth]{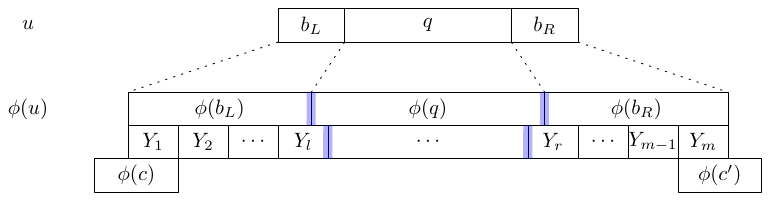}
\caption{Illustration of Case (1) in the proof of \cref{lem:if-barrier}.}
\label{fig:if-barrier}
\end{figure}

In this subsection, we examine several key properties of \IF morphisms.
When proving interference-freeness, 
it is often more convenient to use the following lemma rather than verify the definition directly.
In what follows, $b_L$ and $b_R$ serve as the left and right interference ``barriers''.

\begin{lemma}\label{lem:if-barrier}
Let $\phi : \Sigma^* \rightarrow \Gamma^*$ be an  injective morphism and let $u \in \Sigma^*$.
If there exist $b_L, q, b_R \in \Sigma^*$ such that
$u = b_L \cdot q \cdot b_R$ and $\phi$ is \IF on $\{b_L, b_R\}$,
then $\phi$ is \IF on $\{u\}$.
\end{lemma}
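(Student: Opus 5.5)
The plan is to argue by contradiction, handling the two conditions of \cref{def:if-morph} separately. Throughout, I take $b_L$ and $b_R$ to be non-empty. Without this the statement fails: with $b_L=b_R=\varepsilon$ the hypothesis holds vacuously, yet $\mu$ is not \IF on $\{\a\a\}$. Since $\phi$ is injective it is non-erasing, so $\phi(b_L)$ and $\phi(b_R)$ are non-empty. The key tool is a ``restriction'' principle: if $\phi(u)$ carries some interference, then the suffix $\phi(b_R)$, or the prefix $\phi(b_L)$, inherits an interference of one of the two forbidden kinds.

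\emph{Inner image factor case.} Suppose $\phi(u)$ is an inner image factor of $\phi(c)$. Then $\phi(b_R)$, being a factor of $\phi(u)$, is a non-empty proper factor of $\phi(c)$. Exactly one of three things holds:
\begin{itemize}
\item $\phi(b_R)$ is neither a prefix nor a suffix of $\phi(c)$, so it is an inner image factor;
\item $\phi(b_R)$ is a proper prefix of $\phi(c)$, giving the interfered factorization $\varepsilon\cdot\varepsilon\cdot\phi(b_R)$;
\item $\phi(b_R)$ is a proper suffix of $\phi(c)$, giving the interfered factorization $\phi(b_R)\cdot\varepsilon\cdot\varepsilon$.
\end{itemize}
Each possibility contradicts that $\phi$ is \IF on $\{b_R\}$.

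\emph{Interfered factorization case.} Suppose $\phi(u)=x\cdot Y_1\cdots Y_k\cdot z$ with $x\cdot z\neq\varepsilon$. First assume $z\neq\varepsilon$; the case $x\neq\varepsilon$ is the mirror argument with $b_L$ and a prefix in place of $b_R$ and a suffix. Since $\phi(u)=\phi(b_L)\phi(q)\phi(b_R)$, the word $\phi(b_R)$ is a suffix of $\phi(u)$ starting at some position $p$. Look at the block among $x, Y_1,\dots,Y_k, z$ that contains $p$.
\begin{itemize}
\item If $p$ lies in $x$ or in some $Y_j$, cut that block at $p$. This gives $\phi(b_R)=x'\cdot Y_{j+1}\cdots Y_k\cdot z$, where $x'$ is the remaining part of the block. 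Then $x'$ is a proper suffix of an image: it is a suffix of $x$, which is already a proper suffix of an image; or it is a proper suffix of $Y_j$, or empty if $p$ is the start of $Y_j$. Because $z\neq\varepsilon$, this is an interfered factorization of $\phi(b_R)$, a contradiction.
\item If $p$ lies inside $z$, then $\phi(b_R)$ is a non-empty suffix of $z$, and $z$ is a proper prefix of some $\phi(c)$. So $\phi(b_R)$ is a proper factor of $\phi(c)$, and the trichotomy from the inner image factor case applies verbatim, again a contradiction.
\end{itemize}

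The main obstacle is bookkeeping in this boundary case analysis. One must check that when $p$ falls inside $x$ the leftover piece is still a \emph{proper} suffix of an image, and that the degenerate situation where $\phi(b_R)$ sits entirely inside a single image is covered. This is exactly where the three-way split (inner factor, proper prefix, proper suffix) is needed. Once the $z\neq\varepsilon$ case is written out, the $x\neq\varepsilon$ case follows by the symmetric argument applied to the prefix $\phi(b_L)$. Neither case uses the middle part $q$, so $q$ may be arbitrary, including empty.
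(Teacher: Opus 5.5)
Your proof is correct and follows the same route as the paper's: argue by contrapositive, split into the interfered-factorization and inner-image-factor cases, and transfer the interference to $\phi(b_R)$ when $z\neq\varepsilon$ and to $\phi(b_L)$ when $x\neq\varepsilon$. It is more careful than the paper's proof in two places: the paper tacitly needs $b_L,b_R\neq\varepsilon$ (your counterexample with $\mu$ and $\mathtt{aa}$ is valid), and it loosely asserts that $\phi(b_L),\phi(b_R)$ are again inner image factors, which your prefix/suffix/inner trichotomy handles properly. One small fix: say ``at least one'' rather than ``exactly one'' of the three cases holds, since $\phi(b_R)$ may be both a proper prefix and a proper suffix of $\phi(c)$.
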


\begin{proof}
We proceed by contrapositive and assume that $\phi$ is not \IF on $\{u\}$.
Then one of the following two cases occurs.
\begin{enumerate}[(1)] 
\item\label{case:if-barrier-factorization}
$\phi(u) = \phi(b_L) \cdot \phi(q) \cdot \phi(b_R)$ 
admits an interfered image factorization $\phi(u) = Y_1 \cdots Y_m$,
where $Y_1$ is a proper suffix of $\phi(c)$ for some $c \in \Sigma$,
$Y_i \in \images{\phi}$ for each $2 \leq i \leq m-1$,
$Y_m$ is a proper prefix of $\phi(c')$ for some $c' \in \Sigma$,
and $Y_1 \cdot Y_m \neq \varepsilon$;
or 
\item\label{case:if-barrier-factor}
$\phi(u)$ is an inner image factor.
\end{enumerate}
In Case~(\ref{case:if-barrier-factorization}),
if $\phi(u)$ admits an interfered image factorization, see \cref{fig:if-barrier} for an illustration of this case.
Let $l$ be the smallest index such that $\phi(b_L)$ is a prefix of $Y_1 \cdots Y_l$, and
let $r$ be the largest index such that $\phi(b_R)$ is a suffix of $Y_r \cdots Y_m$.
(Note that these do not have to be proper prefix/suffix relations; 
the non-proper case corresponds to the situation where the blue vertical lines are aligned in \cref{fig:if-barrier}.)
Since at least one of $Y_1$ and $Y_m$ is non-empty,
when $Y_1 \neq \varepsilon$, $\phi$ is not \IF on $\{b_L\}$;
when $Y_m \neq \varepsilon$, $\phi$ is not \IF on $\{b_R\}$.
In Case~(\ref{case:if-barrier-factor}),
if $\phi(u)$ is an inner image factor, then $\phi(b_L)$ and $\phi(b_R)$ are also inner image factors, since both are factors of $\phi(u)$. 
Therefore, we have shown by contrapositive that $\phi$ is \IF on $\{u\}$.
\end{proof}

With \cref{lem:if-barrier}, one can often verify that $\phi$ is \IF on $\{u\}$ simply
by examining suitable choices of $b_L$ and $b_R$.
We next demonstrate the usefulness of \cref{lem:if-barrier} through the following corollaries.

\begin{corollary}\label{cor:fib-if-even}
$\varphi$ is \IF on $\mathcal{L}_{F} = \{ F_i : i \geq 4 \text{~and~} i \text{~is even}  \}$.
\end{corollary}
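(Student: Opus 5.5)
The plan is to apply \cref{lem:if-barrier} with the same short barrier word on both sides. The natural choice is $b_L = b_R = \a\b\a$, which is $F_4$ itself. The proof then reduces to three points: check that $\varphi$ is injective, check that $\varphi$ is \IF on $\{\a\b\a\}$, and check that every $F_i$ with $i \geq 6$ even decomposes as $\a\b\a \cdot q \cdot \a\b\a$.

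First, injectivity. The set $\images{\varphi} = \{\a\b, \a\}$ is a suffix code, because neither image is a proper suffix of the other. A word in $\{\a\b,\a\}^*$ can therefore be parsed uniquely from the right, so $\varphi$ is injective. This also justifies the use of \cref{lem:inj-uniq-fac} wherever it is needed.

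Second, interference-freeness on $\{\a\b\a\}$. We have $\varphi(\a\b\a) = \a\b\a\a\b$, and both conditions of \cref{def:if-morph} can be checked directly.
\begin{itemize}
\item It is not an inner image factor, since every image has length at most $2$ and so has no inner factors at all.
\item For an interfered factorization $x\cdot y\cdot z$, the proper suffixes of images are only $\varepsilon$ and $\b$, and the proper prefixes are only $\varepsilon$ and $\a$. The option $x = \b$ is impossible because $\varphi(\a\b\a)$ starts with $\a$. The option $z = \a$ is impossible because it ends with $\b$. Hence $x = z = \varepsilon$, which violates $x\cdot z \neq \varepsilon$.
\end{itemize}
So no interfered image factorization exists. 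It is worth noting that the shorter choice $\a\b$ fails, since $\varphi(\a\b) = \a\b\cdot\a$ is interfered; this is why the barrier must have length $3$.

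Third, the decomposition. For $i = 4$ the word $u = F_4 = \a\b\a$ is itself the barrier, so the claim follows from the second step. For even $i \geq 6$:
\begin{itemize}
\item Every $F_i$ with $i \geq 5$ begins with $F_5 = \a\b\a\a\b$, since $F_i = F_{i-1}F_{i-2}$, so $\a\b\a$ is a prefix.
\item By \cref{obs:fib-properties}, as used in the proof of \cref{obs:fib-not-if-odd}, $\a\b\a$ is a suffix of every even Fibonacci word. More concretely, $F_i = F_{i-1}F_{i-2}$ with $F_{i-2}$ even, so this follows by induction from $F_4 = \a\b\a$.
\item Since $f_i \geq 8 > 6$, the prefix and suffix occurrences do not overlap.
\end{itemize}
Thus $F_i = \a\b\a\cdot q\cdot \a\b\a$ for some $q$, and \cref{lem:if-barrier} gives that $\varphi$ is \IF on $\{F_i\}$. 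Taking the union over all even $i \geq 4$ yields the claim for $\mathcal{L}_F$.

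The only step with real content is the second: choosing a barrier whose image neither starts with $\b$ nor ends with $\a$. The rest is bookkeeping about prefixes and suffixes of Fibonacci words.
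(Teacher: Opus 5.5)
Your proof is correct and takes essentially the paper's route: \cref{lem:if-barrier} with a barrier at each end of $F_i$, whose interference-freeness you verify directly from the fact that its image starts with $\a$ and ends with $\b$. The only difference is the choice of barrier. The paper's argument amounts to using the single letter $\a$: it notes that $\varphi(\a)=\a\b$ is both a prefix and a suffix of $\varphi(F_i)$, and its phrase ``interference-free on $\{\a\b\}$'' must be read this way, since, as you observe, $\varphi(\a\b)=\a\b\cdot\a$ is interfered. So your remark that the barrier must have length~$3$ is mistaken, because $\a$ already meets your own criterion, though this does not affect the validity of your argument.
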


\begin{proof}
First note that, for any $i \geq 2$, $\a\b$ is a prefix of $\varphi(F_i)$.
Next, for each $F_i \in \mathcal{L}_F$,
$\varphi(F_i) = F_{i+1}$ is an odd Fibonacci word.
Thus, by \cref{obs:fib-properties},
$\a\b$ is also a suffix of $\varphi(F_i)$.
Since $\varphi$ is \IF on $\{ \a\b \}$ by definition,
it follows from \cref{lem:if-barrier} that $\varphi$ is also \IF  on $\mathcal{L}_F$.
\end{proof}

\begin{corollary}\label{cor:tm-if}
$\mu$ is \IF on $\mathcal{L}_{\tm} = \{ \tm_i : i \geq 4 \}$.
\end{corollary}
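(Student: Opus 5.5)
The plan is to apply \cref{lem:if-barrier} in the same way as in the proof of \cref{cor:fib-if-even}: write each $\tm_i$ with $i \geq 4$ as $b_L \cdot q \cdot b_R$ for short barrier words $b_L, b_R$, and check interference-freeness on the barriers directly from \cref{def:if-morph}. First, $\mu$ is injective, because $\images{\mu} = \{\a\b, \b\a\}$ is a prefix code. So the hypotheses of \cref{def:if-morph} and \cref{lem:if-barrier} apply.

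\textbf{Choosing the barriers.} From $\tm_i = \tm_{i-1}\,\flip{\tm_{i-1}}$ we get, by induction, that $\tm_3 = \a\b\b\a$ is a prefix of every $\tm_i$ with $i \geq 3$. For suffixes, $\tm_4 = \a\b\b\a\b\a\a\b$ ends with $\b\a\a\b$. Since the last four letters of $\tm_{i}$ are the flips of the last four letters of $\tm_{i-1}$, the length-$4$ suffix of $\tm_i$ for $i \geq 4$ alternates between $\b\a\a\b$ and $\a\b\b\a$. Hence I take $b_L = \a\b\b\a$ and $b_R \in \{\a\b\b\a, \b\a\a\b\}$. These do not overlap, since $|\tm_i| \geq 8$; for $i = 4$ we simply have $q = \varepsilon$. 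It therefore suffices to show that $\mu$ is \IF on $\{\a\b\b\a, \b\a\a\b\}$.

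\textbf{Checking the barriers.} We have $\mu(\a\b\b\a) = \a\b\b\a\b\a\a\b$ and $\mu(\b\a\a\b) = \b\a\a\b\a\b\b\a$.
\begin{itemize}
\item \emph{No inner image factor.} Both images have length $2$, so the only inner image factor is $\varepsilon$. Both words above are non-empty, so neither is an inner image factor.
\item \emph{No interfered image factorization.} Suppose $w = x\cdot y\cdot z$ with $x$ a proper suffix of an image and $z$ a proper prefix of an image. Then $|x|, |z| \leq 1$. The middle part $y$ lies in $\{\a\b,\b\a\}^*$ and so has even length. Since $|w| = 8$ is even, $|x| + |z|$ must be even. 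Together with $x z \neq \varepsilon$, this forces $|x| = |z| = 1$. So $w[2\ldots 7]$ would have to factor into blocks $\a\b$ and $\b\a$. But $w[2\ldots 7]$ equals $\b\b\a\b\a\a$ or $\a\a\b\a\b\b$, whose first block is $\b\b$ or $\a\a$, a contradiction.
\end{itemize}
Hence $\mu$ is \IF on both barrier words, and \cref{lem:if-barrier} gives the claim for every $\tm_i$ with $i \geq 4$.

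The only step needing care is the suffix identification, namely that the length-$4$ suffix of $\tm_i$ is always $\a\b\b\a$ or $\b\a\a\b$. The flip-recursion handles this cleanly. The parity argument, which rules out one-sided interference, is the key point in the barrier check.
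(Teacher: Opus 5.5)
Your proof is correct and is essentially the paper's argument: both apply \cref{lem:if-barrier} with barriers drawn from $\a\b\b\a$ and $\b\a\a\b$. The paper only asserts interference-freeness on these ``by definition'' and phrases the prefix/suffix facts for $\mu(\tm_i)=\tm_{i+1}$, whereas you place the barriers directly in $\tm_i$ and verify them explicitly with the parity argument. One harmless nit: $\varepsilon$ is a prefix of every image, so $\mu$ has no inner image factors at all, not ``only $\varepsilon$''.
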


\begin{proof}
For each $\tm_i \in \mathcal{L}_{\tm}$,
$\mu(\tm_i) = \tm_{i+1}$.
Observe that \texttt{abba} is a prefix of $\mu(\tm_i)$,
\texttt{baab} is a suffix of $\mu(\tm_i)$ if $i$ is odd,
and \texttt{abba} is a suffix of $\mu(\tm_i)$ if $i$ is even.
Since $\mu$ is \IF on $\{ \texttt{abba}, \texttt{baab} \}$ by definition,
it follows from  \cref{lem:if-barrier} that $\mu$ is \IF on $\mathcal{L}_{\tm}$.
\end{proof}

\cref{lem:if-barrier} also leads to the following 
 simple yet powerful characterization.

\begin{lemma}\label{lem:if-alphabet-sif}
An injective morphism 
$\phi : \Sigma^* \rightarrow \Gamma^*$
is \SIF if and only if $\phi$ is \IF on $\Sigma$.
\end{lemma}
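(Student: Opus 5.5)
The forward direction is immediate from the definitions: if $\phi$ is \SIF, then it is \IF on every subset of $\Sigma^*$, and in particular on $\Sigma \subseteq \Sigma^*$. So the work is entirely in the converse, and the plan is to reduce it to \cref{lem:if-barrier}.

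Assume $\phi$ is \IF on $\Sigma$ and fix a non-empty word $u \in \Sigma^*$. We must show $\phi$ is \IF on $\{u\}$.
\begin{itemize}
\item If $|u| \geq 2$, write $u = b_L \cdot q \cdot b_R$ with $b_L = u[1]$, $b_R = u[|u|]$, and $q = u[2 \ldots |u|-1]$, which may be empty. Since $b_L, b_R \in \Sigma$, $\phi$ is \IF on $\{b_L, b_R\}$. \cref{lem:if-barrier} then gives that $\phi$ is \IF on $\{u\}$.
\item If $|u| = 1$, then $u \in \Sigma$ and the claim holds by hypothesis.
\end{itemize}
Since $u$ was arbitrary, $\phi$ is \IF on $\Sigma^*$, i.e., \SIF.

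The only point to double-check is that \cref{lem:if-barrier} really permits $q = \varepsilon$ and needs no further constraint on $b_L, b_R$. Its proof only uses that $\phi(b_L)$ is a prefix and $\phi(b_R)$ is a suffix of $\phi(u)$, so the decomposition $u = u[1]\cdot q \cdot u[|u|]$ works even when $|u| = 2$. For the same reason, the single-letter case could also be handled by the lemma with $b_L = u$ and $q = b_R = \varepsilon$, but treating it directly is cleaner.

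I do not expect any real obstacle here: once \cref{lem:if-barrier} is available, the lemma is essentially a corollary.
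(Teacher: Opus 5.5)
Your proof is correct and follows the paper's argument. The forward direction is immediate, and for the converse you apply \cref{lem:if-barrier} with $b_L = u[1]$ and $b_R = u[|u|]$. Your explicit treatment of $|u| = 1$ covers a small point the paper leaves implicit, since there the decomposition $u = u[1]\cdot q\cdot u[|u|]$ is unavailable.

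One caution on your aside: \cref{lem:if-barrier} genuinely needs non-empty barriers, so the ``same reason'' justification for $b_R = \varepsilon$ does not hold. For example, $\varphi$ is \IF on $\{\a\}$ but not on $\{\a\b\}$, so taking $b_L = \a$, $q = \b$, $b_R = \varepsilon$ would give a false conclusion. This is harmless here because you do not rely on that remark.
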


\begin{proof}
$(\Rightarrow)$
If $\phi$ is \SIF, then $\phi$ is \IF on $\Sigma^*$,
in particular, $\phi$ is \IF on $\Sigma$.
$(\Leftarrow)$
If $\phi$ is \IF on $\Sigma$,
then,  for each $u \in \Sigma^*$,
we know that $\phi$ is \IF on $\{ u[1], u[|u|] \}$.
It follows that, by \cref{lem:if-barrier}, $\phi$ is \IF on $\{u\}$.
Thus, $\phi$ is \SIF.
\end{proof}

With~\cref{lem:if-alphabet-sif}, we can alternatively prove~\cref{obs:fib-tm-not-sif}
by showing that the Fibonacci morphism $\varphi$ is not \IF on $\{ \b \}$ and 
the Thue-Morse morphism $\mu$ is not \IF on $\{ \a \}$.
Further, the following result also follows directly from~\cref{lem:if-alphabet-sif}.

\begin{corollary}
The following injective  morphisms are \SIF: %
\begin{itemize}
\item 
Mephisto-Waltz~\cite[A064990]{oeis}: $\a \mapsto \a\a\b$, $\b \mapsto \b\b\a$;
\item 
Thue-Morse-Morse~\cite[A189718]{oeis}: $\a \mapsto \a\b\b$, $\b \mapsto \b\a\a$;
\item 
Last nonzero digit~\cite[A080846]{oeis}: $\a \mapsto \a\b\a$, $\b \mapsto \a\b\b$.
\end{itemize}
\end{corollary}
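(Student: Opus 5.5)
By \cref{lem:if-alphabet-sif}, it suffices to show, for each of the three morphisms, that $\phi$ is injective and \IF on $\Sigma = \{\a, \b\}$. This reduces the claim to a finite check for each morphism: for $c \in \{\a,\b\}$, we must verify that $\phi(c)$ admits no interfered image factorization and that $\phi(c)$ is not an inner image factor.

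\textbf{Injectivity.} All three morphisms are $3$-uniform with distinct images. Any uniform morphism with distinct images is injective, because an image factorization of $\phi(u)$ is forced to cut at every multiple of $3$.

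\textbf{Inner image factors.} This condition is immediate. Since $|\phi(c)| = 3$ equals the length of every image, $\phi(c)$ cannot be a proper factor of any image.

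\textbf{Interfered image factorizations.} Suppose $\phi(c) = x\,y\,z$ with $x$ a proper suffix of an image, $z$ a proper prefix of an image, $y$ a concatenation of images, and $xz \neq \varepsilon$. Because $|y|$ is a multiple of $3$ and $|\phi(c)| = 3$, either $y$ is empty or $y = \phi(c)$. The latter forces $x = z = \varepsilon$, which is excluded. Hence $y = \varepsilon$ and $\phi(c) = xz$ with $|x|, |z| \le 2$, so $(|x|,|z|) \in \{(1,2),(2,1)\}$.

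So the check reduces to two conditions, for each morphism and each letter $c$:
\begin{itemize}
\item the length-$1$ prefix of $\phi(c)$ is not a suffix of any image, or the length-$2$ suffix of $\phi(c)$ is not a prefix of any image;
\item the length-$2$ prefix of $\phi(c)$ is not a suffix of any image, or the length-$1$ suffix of $\phi(c)$ is not a prefix of any image.
\end{itemize}

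\emph{Mephisto-Waltz} ($\a\mapsto\a\a\b$, $\b\mapsto\b\b\a$). The image prefixes of length $2$ are $\a\a$ and $\b\b$, and the image suffixes of length $2$ are $\a\b$ and $\b\a$.
\begin{itemize}
\item For $\a\a\b$, the split $\a\cdot\a\b$ fails because $\a\b$ is not a prefix of any image. The split $\a\a\cdot\b$ fails because $\a\a$ is not a suffix of any image.
\item The case $\b\b\a$ is symmetric.
\end{itemize}

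\emph{Thue-Morse-Morse} ($\a\mapsto\a\b\b$, $\b\mapsto\b\a\a$). The image prefixes of length $2$ are $\a\b$ and $\b\a$, and the image suffixes of length $2$ are $\b\b$ and $\a\a$.
\begin{itemize}
\item For $\a\b\b$, the split $\a\cdot\b\b$ fails because $\b\b$ is not a prefix of any image. The split $\a\b\cdot\b$ fails because $\a\b$ is not a suffix of any image.
\item The case $\b\a\a$ is symmetric.
\end{itemize}

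\emph{Last nonzero digit} ($\a\mapsto\a\b\a$, $\b\mapsto\a\b\b$). The image prefixes of length $2$ are both $\a\b$, and the image suffixes of length $2$ are $\b\a$ and $\b\b$. The image suffixes of length $1$ are $\a$ and $\b$, and the only image prefix of length $1$ is $\a$.
\begin{itemize}
\item For $\a\b\a$, the split $\a\cdot\b\a$ fails because $\b\a$ is not a prefix of any image. The split $\a\b\cdot\a$ fails because $\a\b$ is not a suffix of any image.
\item For $\a\b\b$, the split $\a\cdot\b\b$ fails because $\b\b$ is not a prefix of any image. The split $\a\b\cdot\b$ fails because $\b$ is not a prefix of any image.
\end{itemize}

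There is no real obstacle. The only point needing care is the length-divisibility argument showing $y = \varepsilon$, which is what makes the enumeration above finite and exhaustive.
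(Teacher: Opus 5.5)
Your proof is correct and follows the paper's route. The paper only says the corollary follows directly from \cref{lem:if-alphabet-sif}; you apply the same lemma and carry out the finite check on $\Sigma=\{\a,\b\}$ explicitly, supplying details the paper leaves implicit. Those details are the length-divisibility argument forcing $y=\varepsilon$, the observation that $3$-uniformity rules out inner image factors and gives injectivity, and the split-by-split verification.
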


\subsection{Interference-Free and Recognizable Morphisms}

In this subsection, we uncover a connection between \IF and  recognizable morphisms.
We adapt the definition from \cite{conf/mfcs/2025/FRSU}, while
extracting the core idea as follows.

\begin{definition}[Circular Image Factorization]
Let $\phi: \Sigma^* \to \Gamma^*$ be a morphism and let $w \in \Gamma^*$ be a word.
We say $w$ admits a \emph{circular $\phi$-image factorization}
if $w = q \cdot r \cdot p$, where 
$r$ admits an image factorization,
$p \cdot q = \phi(c)$ for some $c \in \Sigma$, \emph{and}\footnote{
The condition $p \neq \varepsilon$ is necessary for uniqueness of circular image factorizations.
Otherwise, any image factorization $w = X_1 \cdots X_n$ 
would always yield at least two distinct circular image factorizations,
 by setting $q = \varepsilon$, $r = X_1 \cdots X_{n-1}$, and $p = X_n$;
or $q = X_1$, $r = X_2 \cdots X_{n}$, and $p = \varepsilon$. 
} $p \neq \varepsilon$. 
When the morphism is clear from the context, we simply say that $w$ admits a \emph{circular image factorization}.
\end{definition}

\begin{definition}[Recognizable Morphisms]\label{def:recognizable}
Let $\phi: \Sigma^* \to \Gamma^*$ be an injective morphism  
and let $\mathcal{L} \subseteq \Sigma^*$.
We say $\phi$ is \emph{recognizable on $\mathcal{L}$} 
if for every non-empty word $u \in \mathcal{L}$ and every rotation $w' \in \mathcal{R}(w)$, where $w = \phi(u)$,
$w'$ admits a unique circular image factorization.
If $\mathcal{L} = \Sigma^*$, we simply say that $\phi$ is recognizable.
\end{definition}

\begin{figure}[t]
\centering
\includegraphics[width=\linewidth]{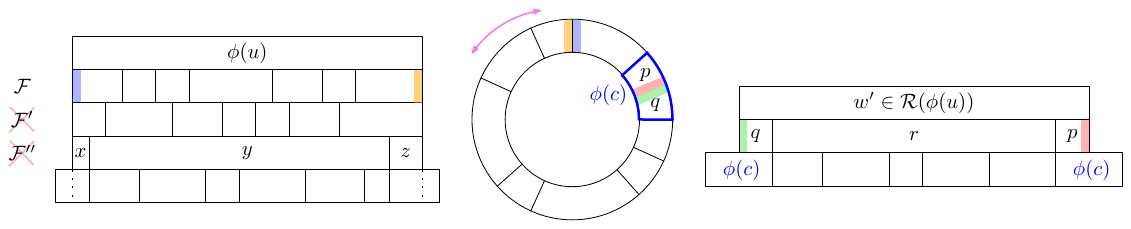}
\caption{
Various illustrations 
of concepts and results 
from \cref{sec:if-morphisms}.
Each unlabeled solid rectangle or annular sector represents an image of $\phi$.
\underline{Left:} for 
\cref{lem:inj-uniq-fac} and  \cref{def:if-morph}.
If $\phi$ is injective, then $\phi(u)$ admits the unique image factorization $\mathcal{F} = \phi(u[1]) \cdots \phi(u[|u|])$, and no alternative image factorization $\mathcal{F}'$ is possible.
If, moreover, $\phi$ is \IF on $\{u\}$, then no interfered image factorization $\mathcal{F}''$ is possible either.
\underline{Right:} for \cref{def:recognizable}.
Given a rotation $w'$ and a circular image factorization of $w'$,
by folding the (linear) word and connecting its two ends (marked in green and red), we obtain the circular word shown in the middle.
\underline{Middle:} for \cref{remark:if-rec-intuition}.
}
\label{fig:inj-vs-if}
\end{figure}

\begin{remark}\label{remark:if-rec-intuition}
The  intuition behind \cref{thm:if-implies-rec} is illustrated 
in \cref{fig:inj-vs-if}.
If $\phi$ is \IF on $\{u\}$,
then by folding the (linear) word $\phi(u)$ and connecting its two ends (marked in blue and yellow on the left),
we obtain the circular word shown in the middle.
Crucially, regardless of how the circular word is rotated, producing different rotations in $\mathcal{R}(\phi(u))$,
the condition for recognizability remains satisfied.
Hence, with interference-freeness, each unique (linear) image factorization
induces a unique circular image factorization.
\lipicsEnd
\end{remark}

\begin{theorem}\label{thm:if-implies-rec}
Let $\phi: \Sigma^* \to \Gamma^*$ be an injective morphism and let $\mathcal{L} \subseteq \Sigma^*$. 
If $\phi$ is \IF on $\mathcal{L}$, then $\phi$ is recognizable on $\mathcal{L}$.  
\end{theorem}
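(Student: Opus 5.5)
The plan is to pass between linear factorizations of $w = \phi(u)$ and circular factorizations of its rotations by cutting the circle at a single point, the ``wrap point'' of $w$. Existence is then immediate from the canonical factorization $\phi(u[1])\cdots\phi(u[|u|])$. For uniqueness, I will show that any other circular factorization would force an interfered image factorization of $w$, which \IF rules out.

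Fix a non-empty $u \in \mathcal{L}$ and let $w = \phi(u) = X_1 \cdots X_n$ with $X_j = \phi(u[j])$; by \cref{lem:inj-uniq-fac} this is the unique image factorization of $w$. Let $w' = w[i \ldots |w|]\cdot w[1 \ldots i-1]$ be a rotation.

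\emph{Existence.} There are three cases, according to where position $i$ falls.
\begin{itemize}
\item If $i = 1$, take $q = \varepsilon$, $r = X_1 \cdots X_{n-1}$ and $p = X_n$.
\item If $i$ is the first position of some $X_j$ with $j \geq 2$, take $q = \varepsilon$, $r = X_j \cdots X_n X_1 \cdots X_{j-2}$ and $p = X_{j-1}$.
\item If $i$ lies strictly inside $X_j$, write $X_j = p q$ with $p, q \neq \varepsilon$, where $q$ starts at position $i$. Take $r = X_{j+1}\cdots X_n X_1 \cdots X_{j-1}$.
\end{itemize}
In every case $pq$ is an image and $p \neq \varepsilon$, so this is a circular image factorization.

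\emph{Uniqueness.} I view any circular factorization $w' = q \cdot r \cdot p$ as a set of cut positions on the circular word of length $|w|$. These are the boundaries between the blocks $pq, R_1, \ldots, R_k$, where $r = R_1 \cdots R_k$ is an image factorization.

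First I check that the cut set determines the factorization. Given the cuts, $q$ is the prefix of $w'$ before the first cut, and $p$ is the suffix after the last cut. The constraint $p \neq \varepsilon$ fixes the convention when a cut lies at the start of $w'$: then $q = \varepsilon$ and $p$ is the last block. Finally, $r$ is factored by the remaining cuts, and $\images{\phi}$ is a code by \cref{lem:inj-uniq-fac}.

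It therefore suffices to show that every circular factorization has the same cut set as the canonical one constructed above. Unfold the circle at the wrap point of $w$, i.e.\ between $w[|w|]$ and $w[1]$, and look at where this point lies relative to the blocks.
\begin{itemize}
\item If the wrap point is a cut, reading the blocks from $w[1]$ gives an image factorization of $w$. By \cref{lem:inj-uniq-fac} it equals $X_1 \cdots X_n$, so the cut set is the canonical one.
\item Otherwise the wrap point lies strictly inside some block $\phi(c) = \alpha\beta$ with $\alpha, \beta \neq \varepsilon$. Reading from $w[1]$ gives $w = \beta \cdot y \cdot \alpha$. Here $y$ is a concatenation of the remaining blocks, so it admits an image factorization, possibly empty; $\beta$ is a proper suffix and $\alpha$ a proper prefix of $\phi(c)$; and $\beta\alpha \neq \varepsilon$. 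This is an interfered image factorization of $\phi(u)$, contradicting that $\phi$ is \IF on $\mathcal{L}$.
\end{itemize}
Hence every circular factorization of $w'$ has the canonical cut set, and is therefore the factorization constructed in the existence step.

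The main obstacle is making the circular-to-linear correspondence rigorous, especially in degenerate configurations. The first is a single block covering the whole circle: $r = \varepsilon$ and $|pq| = |w|$, giving $w = \beta\alpha$ with empty $y$. The second is the wrap point coinciding with the start of $w'$. I will handle both by working throughout with cut sets modulo $|w|$, and by noting that $|pq| \leq |w'| = |w|$, so no block wraps around the circle more than once. The second \IF condition, that $\phi(u)$ is not an inner image factor, does not seem to be needed.
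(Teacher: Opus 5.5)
Your proposal is correct and follows essentially the same route as the paper's proof. Both arguments first obtain a circular factorization of every rotation from the canonical factorization $\phi(u[1])\cdots\phi(u[|u|])$. Both then locate the wrap point of $w$ among the blocks of an arbitrary circular factorization: injectivity settles the case where it falls on a block boundary, and the interfered image factorization $\beta\cdot y\cdot\alpha$ settles the case where it falls strictly inside a block. Your cut-set formulation, including the check via $p\neq\varepsilon$ that the cuts determine $(q,r,p)$, makes explicit a step the paper passes over in its Case~(1), where it asserts that equal linear factorizations of $w$ contradict the distinctness of the two circular factorizations of $w'$.
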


\begin{proof}
We proceed by contrapositive and prove that,
for each non-empty word $u \in \mathcal{L}$,
if $\phi$ is \emph{not} recognizable on $\{u\}$,
then $\phi$ is \emph{not} \IF on $\{u\}$.

Let $w = \phi(u)$.
We have assumed that $\phi$ is not recognizable on $\{u\}$,
which means there exist some $w' \in \mathcal{R}(w)$ such that 
either $w'$ does not admit a circular image factorization, 
or its factorization is not unique.
We first show that each $w' \in \mathcal{R}(w)$ does admit a circular image factorization.
Let $w = X_1 \cdots X_n$ be an image factorization.
Since $w'$ is a rotation of $w$, there exists an index $1 \leq c \leq n$ 
such that  $w' = \suf{X_c} \cdot X_{c+1} \cdots X_n \cdot X_1 \cdots X_{c-1} \cdot \pref{X_c}$, where
$\pref{X_c} \neq \varepsilon$ and  $\pref{X_c} \cdot \suf{X_c} = X_c$.
Setting $q = \suf{X_c}$, $r = X_{c+1} \cdots X_n \cdot X_1 \cdots X_{c-1}$, and $p = \pref{X_c}$,
we have shown that $w'$ admits a circular image factorization. 

Since such a factorization exists for each $w' \in \mathcal{R}(w)$, 
our assumption implies that there must exist $w'$ whose  circular image factorization is not unique.
Hence, consider  one such factorization of $w'$, given by
$w' = \suf{Y_1} \cdot Y_2 \cdots Y_m \cdot \pref{Y_1}$,
where $Y_i \in \images{\phi}$ for $1 \leq i \leq m$ and $\pref{Y_1} \cdot \suf{Y_1} = Y_1$. 
Further, define index $1 \leq k \leq m$, words $\pref{Y_{k}}$ and \suf{Y_{k}}
such that $\pref{Y_{k}} \cdot \suf{Y_{k}} = Y_k$, and
$ \suf{Y_1} \cdot Y_2 \cdots \pref{Y_{k}}  =  \suf{X_c} \cdot X_{c+1} \cdots X_n $.
Now, rotating $w'$ back to $w$ gives
$w = \suf{Y_{k}} \cdot Y_{k+1} \cdots Y_m \cdot Y_1 \cdots Y_{k-1} \cdot \pref{Y_{k}}$.
We now prove that $\phi$ is not \IF on $\{u\}$ by considering this factorization of $w$ in three cases.
\begin{enumerate}[(1)] 
\item 
$\pref{Y_{k}} = Y_k$ and $\suf{Y_k} = \varepsilon$. 
In this case, $w = X_1 \cdots X_n =  Y_{k+1} \cdots Y_m \cdot Y_1 \cdots Y_{k-1} \cdot Y_{k}$.
The injectivity of $\phi$ implies that the factorizations for $w$ must be equivalent, contradicting that the two factorizations for $w'$ were distinct.
\item 
$\pref{Y_{k}} = \varepsilon$ and $\suf{Y_k} = Y_k$. 
By a symmetric argument, $\phi$ is again not injective.
\item 
$\pref{Y_{k}} \neq \varepsilon$ and $\suf{Y_k} \neq \varepsilon$. 
In this case, let $x = \suf{Y_{k}}$, $y =  Y_{k+1} \cdots Y_m \cdot Y_1 \cdots Y_{k-1}$, and $z = \pref{Y_{k}}$. 
Then, $\phi$ is not \IF on $\{u\}$ since $x$ is a proper suffix of $Y_k \in \images{\phi}$, $y \in \images{\phi}^*$, $z$ is a proper prefix of $Y_k \in \images{\phi}$, and $x \cdot z = Y_k \neq \varepsilon$. 
Thus, $w = \phi(u)$ admits an interfered image factorization.
\end{enumerate}
Therefore, we have shown that if $\phi$ is not recognizable on $\{u\}$,
then $\phi$ is not \IF on $\{u\}$.
This completes the proof.
\end{proof}

\begin{figure}[t]
\centering
\includegraphics[width=0.9\linewidth]{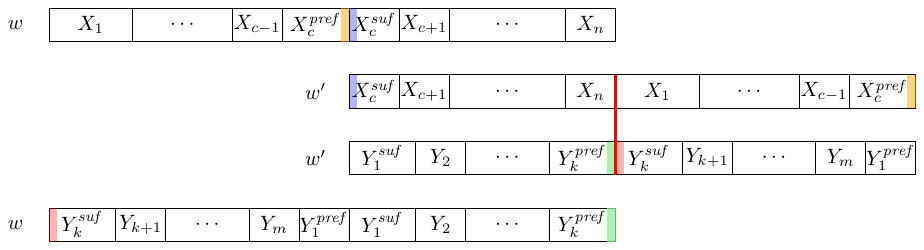}
\caption{Illustration of the proof of \cref{thm:if-implies-rec}.
}
\label{fig:if-rec-proof}
\end{figure}

\cref{thm:if-implies-rec} naturally extends to the following, more general result.

\begin{corollary}
Every \SIF  morphism is recognizable.
\end{corollary}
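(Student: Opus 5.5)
This corollary follows by specializing \cref{thm:if-implies-rec} to $\mathcal{L} = \Sigma^*$. Let $\phi : \Sigma^* \to \Gamma^*$ be \SIF. By \cref{def:if-morph}, being \SIF includes being injective, so $\phi$ meets the standing hypothesis of \cref{thm:if-implies-rec}. Being \SIF also means $\phi$ is \IF on $\Sigma^*$. Applying \cref{thm:if-implies-rec} with $\mathcal{L} = \Sigma^*$ gives that $\phi$ is recognizable on $\Sigma^*$. By \cref{def:recognizable}, this is exactly the statement that $\phi$ is recognizable.

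It may help to note the equivalent route through \cref{lem:if-alphabet-sif}. Strong interference-freeness is equivalent to interference-freeness on $\Sigma$, so it suffices to check the condition on single letters. However, \cref{thm:if-implies-rec} needs interference-freeness on the whole language in question. We therefore use the full set $\Sigma^*$, which strong interference-freeness supplies directly.

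I expect no real obstacle. The only point to confirm is that the definitions line up: ``recognizable'' without qualification means recognizable on $\Sigma^*$, and the injectivity assumption is part of both definitions. Both facts are explicit in \cref{def:if-morph} and \cref{def:recognizable}, so the argument is immediate.
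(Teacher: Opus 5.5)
Your proposal is correct and matches the paper, which presents this corollary as the direct specialization of \cref{thm:if-implies-rec} to $\mathcal{L} = \Sigma^*$. Injectivity is already built into the definition of interference-freeness, so the hypotheses line up exactly as you describe.
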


Having established that interference-freeness implies recognizability, 
it is natural to ask whether the converse also holds.
In \cite{journal/etds/2023/beal}, the Fibonacci morphism was shown to be recognizable. 
Together with \cref{obs:fib-not-if-odd}, this implies 
a morphism $\varphi$ and an infinite family $\mathcal{L}$ of words 
such that $\varphi$ is recognizable on $\mathcal{L}$ but not \IF on $\mathcal{L}$.
Thus, recognizability does not imply interference-freeness.
We summarize the resulting strict hierarchy below.

\begin{remark}\label{remark:hierarchy}
The following inclusion relation among morphisms holds:
\[
\mathtt{INTERFERENCE}\_\mathtt{FREE} 
\subsetneq \mathtt{RECOGNIZABLE} 
\subsetneq  \mathtt{INJECTIVE}.
\]
Moreover, the Thue-Morse morphism is injective but not recognizable, 
whereas the Fibonacci morphism is recognizable but not \SIF.
\lipicsEnd
\end{remark}

\section{Occurrence-Preserving Morphisms}

After  introducing and discussing \IF morphisms, 
we are ready to present the main result,
which establishes sufficient conditions under which factor occurrences are preserved.

\begin{theorem}\label{thm:occ-preserve}
Let $\phi: \Sigma^* \to \Gamma^*$ be an injective morphism,
let $u, v \in \Sigma^*$ be two words,
and let $k \geq 1$ be an integer.
If $\phi$ is interference-free on $\bigcup_{i=0}^{k-1} \{ \phi^i(u) \}$,
then, 
\[
\occ{u}[v] = \occ{\phi^k(u)}[\phi^k(v)].
\]
\end{theorem}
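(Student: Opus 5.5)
The plan is to first prove the case $k=1$ and then obtain the general statement by induction on $k$. For $k=1$, the goal is: if $\phi$ is injective and \IF on $\{u\}$, then $\occ{u}[v] = \occ{\phi(u)}[\phi(v)]$. For the induction, apply the $k=1$ statement to the pair $(\phi^{i}(u), \phi^{i}(v))$ for $i = 0, \ldots, k-1$; the hypothesis gives exactly that $\phi$ is \IF on $\{\phi^i(u)\}$. Chaining the equalities yields $\occ{u}[v] = \occ{\phi(u)}[\phi(v)] = \cdots = \occ{\phi^k(u)}[\phi^k(v)]$. Note that $\phi^i(u)$ is non-empty whenever $u$ is non-empty, since \IF on a non-empty $u$ forces $\phi(u)\neq\varepsilon$ (otherwise $\phi$ would not be injective). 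The case $u=\varepsilon$ is degenerate and should be checked separately, or excluded as the paper's convention presumably intends.

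For $k=1$, write $v = v[1]\cdots v[|v|]$ and $\phi(v) = X_1\cdots X_{|v|}$ with $X_j = \phi(v[j])$. Let $B = \{|X_1\cdots X_{j-1}|+1 : 1\le j\le |v|\}$ be the set of block boundaries in $\phi(v)$. Define the map $i \mapsto |\phi(v[1\ldots i-1])|+1$ from $\Occ{u}[v]$ to $\Occ{\phi(u)}[\phi(v)]$. It is well defined because $\phi(v) = \phi(v[1\ldots i-1])\,\phi(u)\,\phi(\cdots)$. It is injective because $\phi$ is non-erasing, which follows from injectivity, so these start positions are strictly increasing in $i$. It remains to show surjectivity.

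Take any occurrence $[s,t]$ of $w=\phi(u)$ in $\phi(v)$. There are two cases.
\begin{enumerate}[(a)]
\item The occurrence lies strictly inside a single block $X_j$, touching neither its start nor its end. Then $w$ is an inner image factor, which is excluded.
\item Otherwise, let $X_a$ be the block containing position $s$ and $X_b$ the block containing $t$. This induces a decomposition $w = x\cdot y\cdot z$. Here $x$ is a suffix of $X_a$, and we take $x = \varepsilon$ if $s$ is a boundary. Similarly, $z$ is a prefix of $X_b$, with $z=\varepsilon$ if $t$ ends block $X_b$. Finally, $y = X_{a'}\cdots X_{b'}$ consists of the full blocks in between. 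Both $x$ and $z$ are proper by construction. If $x z\neq\varepsilon$, this is an interfered image factorization, which is excluded.
\end{enumerate}
Some edge cases need care: when $a=b$ and the occurrence is a prefix or suffix of $X_a$, it falls under (b) with $y=\varepsilon$. Hence $x=z=\varepsilon$, so the occurrence starts and ends at boundaries, and $w = X_{a}\cdots X_{b} = \phi(v[a\ldots b])$. By injectivity of $\phi$, $v[a\ldots b] = u$. So $a\in\Occ{u}[v]$ maps to $s$, which proves surjectivity.

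The main obstacle is the boundary bookkeeping in the surjectivity step. One must make sure every occurrence not aligned with block boundaries falls into exactly one of the two forbidden patterns. This includes occurrences that start and end inside the same block while touching one of its ends, and occurrences where $y$ is empty but $x$ and $z$ come from different blocks. I would handle this by defining $x$, $y$, $z$ uniformly via the boundary positions in $B\cap[s,t+1]$, splitting on whether that set is empty.
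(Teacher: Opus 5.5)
Your proposal is correct and follows the paper's route. The paper reduces to $k=1$ and inducts, and gets $\le$ by embedding occurrences via $i\mapsto|\phi(v[1\ldots i-1])|+1$. It gets $\ge$ by showing that an occurrence of $\phi(u)$ not aligned with the blocks of $\phi(v)$ yields an interfered image factorization or an inner image factor. An aligned occurrence pulls back to an occurrence of $u$ by injectivity. This is \cref{lem:good-bad-cases} together with the $|q|=1$ versus $|q|>1$ split. Your occurrence-by-occurrence surjectivity argument, with its explicit check of whether $xz\neq\varepsilon$, is in fact tighter than the paper's closing step. That step passes from ``$u$ is a factor of $v$'' directly to a count inequality, and the paper's construction there can produce $x=z=\varepsilon$ without comment.

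There is one small slip in your case (a). An occurrence lying strictly inside $X_j$ does not by itself make the word $\phi(u)$ an inner image factor, because $\phi(u)$ may still be a prefix or suffix of $X_j$ elsewhere (e.g.\ $\a$ strictly inside $\a\a\a$). In that event $\phi(u)$ is a proper prefix or suffix of an image. It then admits an interfered image factorization with $x=y=\varepsilon$ or $y=z=\varepsilon$, so the occurrence is still excluded, exactly as in the paper's $|q|=1$ discussion.

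Also, $u=\varepsilon$ cannot just be ``checked separately''. Interference-freeness on $\{\varepsilon\}$ is vacuous, while under the paper's convention $\occ{\varepsilon}[v]=|v|+1\neq|\phi^k(v)|+1$ in general. So the statement genuinely requires $u$ to be non-empty.
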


\noindent
To prove this theorem, we first observe that it suffices to establish the case $k=1$.
The general case then follows by induction on $k$, using an argument analogous to that for the base case $k=1$.
Accordingly, we aim to show the following.

\begin{proposition}\label{prop:count-invariance}
Let $\phi: \Sigma^* \to \Gamma^*$ be an injective morphism and
let $u, v \in \Sigma^*$ be two words.
If $\phi$ is \IF on $\{u\}$,
then $\occ{u}[v] = \occ{\phi(u)}[\phi(v)]$.
\end{proposition}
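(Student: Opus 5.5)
We prove $\occ{u}[v] = \occ{\phi(u)}[\phi(v)]$ by building an explicit bijection between $\Occ{u}[v]$ and $\Occ{\phi(u)}[\phi(v)]$. We may assume $u \neq \varepsilon$: for $u = \varepsilon$ both sides count empty occurrences, which requires $|\phi(v)| = |v|$. This edge case should be checked separately or excluded, since the definition of interference-freeness only constrains non-empty $u$. Write $v = v[1]\cdots v[n]$ and set $X_j = \phi(v[j])$, so that $\phi(v) = X_1 \cdots X_n$. Let $B_j = 1 + \sum_{t<j} |X_t|$ be the \emph{boundary} position where $X_j$ begins in $\phi(v)$.

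The forward map sends an occurrence $i \in \Occ{u}[v]$ to $B_i$. Since $v[i\ldots i+|u|-1] = u$, the factor of $\phi(v)$ starting at $B_i$ is $X_i\cdots X_{i+|u|-1} = \phi(u)$, so the map is well defined. It is injective because the $B_j$ are strictly increasing; this needs $\phi$ non-erasing, which follows from injectivity when $|\Sigma|\ge 2$. The unary case with an erasing image is trivial, or can be noted as implicit.

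The main step is surjectivity. Take an occurrence $[s, e]$ of $\phi(u)$ in $\phi(v)$ and compare it with the block decomposition. Suppose first that $[s,e]$ lies strictly inside a single block $X_j$ and touches neither of its ends. Then $\phi(u)$ is a proper factor of $\phi(v[j])$ that is neither a prefix nor a suffix, i.e.\ an inner image factor, which contradicts interference-freeness.

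Otherwise the occurrence meets at least one block boundary, or begins or ends at one. Let $j$ be the block containing $s$ and $j'$ the block containing $e$. This gives a decomposition $\phi(u) = x\cdot y\cdot z$, where $x$ is the suffix of $X_j$ starting at $s$, $y = X_{j+1}\cdots X_{j'-1}$ is a product of images, and $z$ is the prefix of $X_{j'}$ ending at $e$. We normalise so that $x$ is a proper suffix: it is empty when $s = B_j$, in which case $X_j$ is absorbed into $y$. Similarly $z$ is empty when $e$ ends $X_{j'}$.

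If $x z \neq \varepsilon$, this is an interfered image factorization of $\phi(u)$, again a contradiction. Hence $x = z = \varepsilon$, so $s = B_j$ and $\phi(u) = X_j \cdots X_{j'}$ is an image factorization. By \cref{lem:inj-uniq-fac}, the factorization $\phi(u[1])\cdots\phi(u[|u|])$ is the unique image factorization of $\phi(u)$. Therefore $j'-j+1 = |u|$ and $X_{j+t-1} = \phi(u[t])$ for all $t$. Injectivity on letters then gives $v[j+t-1] = u[t]$, so $j \in \Occ{u}[v]$ and $s = B_j$ lies in the image of the forward map.

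The main obstacle is the careful case split on where the occurrence starts and ends relative to block boundaries. This includes the degenerate case where $s$ and $e$ fall in the same block $j = j'$ but one of them touches a boundary; it is covered by the interfered factorization with $y = \varepsilon$ and one of $x, z$ empty. We must also ensure that "proper" prefix and suffix are handled consistently, with full blocks absorbed into $y$, so that exactly the two forbidden patterns of \cref{def:if-morph} arise.
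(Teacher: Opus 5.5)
Your overall strategy is sound and is essentially the paper's. You align an occurrence of $\phi(u)$ against the blocks $X_1\cdots X_n$ of $\phi(v)$, show that any misalignment produces an interfered image factorization or an inner image factor, and apply injectivity to an aligned occurrence. Organizing this as an explicit bijection is tighter than the paper's version. The paper ends with the existence-level statement ``if $\phi(u)$ is a factor of $\phi(v)$ then $u$ is a factor of $v$'' and reads off $\occ{u}[v]\ge\occ{\phi(u)}[\phi(v)]$ from it. Your claim that \emph{every} occurrence of $\phi(u)$ starts at some $B_j$ with $j\in\Occ{u}[v]$ is what the count actually needs, and it also gives the position bijection of the next lemma for free.

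There is, however, one incorrect inference, in your first case. You argue that since the occurrence $[s,e]$ touches neither end of $X_j$, $\phi(u)$ is neither a prefix nor a suffix of $\phi(v[j])$, and hence is an inner image factor. But being an inner image factor is a property of the \emph{word} $\phi(u)$, not of this particular occurrence. A word can occur in the interior of an image and also be a prefix of it: for example, $\mathtt{ab}$ occurs at position $3$ of $\mathtt{ababa}$ but is a prefix of $\mathtt{ababa}$, so it is not an inner image factor with respect to that image. The contradiction is still reachable, but through the other clause of the definition. Since $|\phi(u)|<|\phi(v[j])|$, if $\phi(u)$ is a prefix (resp.\ suffix) of $\phi(v[j])$ it is a proper one. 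Then $\varepsilon\cdot\varepsilon\cdot\phi(u)$ (resp.\ $\phi(u)\cdot\varepsilon\cdot\varepsilon$) is an interfered image factorization, and only in the remaining case is $\phi(u)$ an inner image factor. This is exactly the two-way split the paper makes for $|q|=1$.

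Two minor points. Non-erasure follows from injectivity over any alphabet (if $\phi(c)=\varepsilon$ then $\phi(cc)=\phi(c)$), so the unary caveat is unnecessary. You are right to set aside $u=\varepsilon$. Interference-freeness on $\{\varepsilon\}$ holds vacuously, while $\occ{\varepsilon}[v]=|v|+1$ and $\occ{\varepsilon}[\phi(v)]=|\phi(v)|+1$ differ in general, so the statement implicitly requires $u$ to be non-empty.
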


We begin with the following lemma,
which intuitively states that applying a non-erasing morphism cannot ``eliminate'' existing factor occurrences.

\begin{lemma}\label{lem:count:leq}
Let $\phi: \Sigma^* \to \Gamma^*$ be a non-erasing morphism and
let $u, v \in \Sigma^*$ be two words. Then
$\occ{u}[v] \leq \occ{\phi(u)}[\phi(v)]$.
\end{lemma}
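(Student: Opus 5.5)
We will construct an explicit injective map from $\Occ{u}[v]$ to $\Occ{\phi(u)}[\phi(v)]$; the inequality then follows by comparing cardinalities. For each position $1 \leq j \leq |v|+1$, let $P(j) = |\phi(v[1 \ldots j-1])| + 1$ denote the starting position in $\phi(v)$ of the image block $\phi(v[j])$. Equivalently, $P(j)$ is the position right after $\phi$ of the prefix of length $j-1$. The candidate map sends an occurrence $i \in \Occ{u}[v]$ to $P(i)$.

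First, we check that the map is well defined. If $v[i \ldots i+|u|-1] = u$, then by the morphism property
\[
\phi(v) = \phi(v[1 \ldots i-1]) \cdot \phi(u) \cdot \phi(v[i+|u| \ldots |v|]).
\]
Hence $\phi(u)$ occurs in $\phi(v)$ at position $|\phi(v[1\ldots i-1])|+1 = P(i)$, so $P(i) \in \Occ{\phi(u)}[\phi(v)]$.

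Second, we check injectivity, which is where non-erasingness is used. Since $|\phi(c)| \geq 1$ for every $c \in \Sigma$, we have $P(j+1) - P(j) = |\phi(v[j])| \geq 1$. So $P$ is strictly increasing in $j$, and distinct occurrences $i < i'$ of $u$ give $P(i) < P(i')$. Therefore $|\Occ{u}[v]| \leq |\Occ{\phi(u)}[\phi(v)]|$.

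Finally, we treat the empty word separately, following the paper's convention. If $u = \varepsilon$, then $\occ{u}[v] = |v|+1$ and $\occ{\phi(u)}[\phi(v)] = |\phi(v)|+1 \geq |v|+1$, again because $\phi$ is non-erasing. The same argument via $P$ also covers this case, with the occurrence at $[i,i-1]$ mapped to $[P(i), P(i)-1]$. There is no real obstacle here; the only point needing care is the strict monotonicity of $P$, which is exactly what non-erasingness provides.
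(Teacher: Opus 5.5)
Your proof is correct and follows essentially the same route as the paper: each occurrence $v = v[1 \ldots i-1] \cdot u \cdot v[i+|u| \ldots |v|]$ is pushed through $\phi$ to give an occurrence of $\phi(u)$ at position $|\phi(v[1 \ldots i-1])|+1$, and distinct occurrences map to distinct positions. Your strict-monotonicity argument for $P$ makes explicit where non-erasingness is needed, a step the paper covers only by saying the occurrences ``correspond to different prefixes''.
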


\begin{proof}
For each occurrence of $u$ in $v$,
there exist words $\pref{v}, \suf{v} \in \Sigma^*$ such that $v = \pref{v} \cdot u \cdot \suf{v}$.
Applying $\phi$ to both sides gives
$\phi(v) = \phi(\pref{v}) \cdot \phi(u) \cdot \phi(\suf{v})$.
Hence, each occurrence of $u$ in $v$ yields an occurrence of $\phi(u)$ in $\phi(v)$.
Moreover, distinct occurrences of $u$ in $v$ yield distinct occurrences of $\phi(u)$ in $\phi(v)$, 
since they correspond to different prefixes $\pref{v}$. %
Therefore, $\occ{u}[v] \leq \occ{\phi(u)}[\phi(v)]$.
\end{proof}

Note that the non-erasing condition is not explicitly stated in \cref{thm:occ-preserve};
the following observation shows that it is already implied by injectivity.

\begin{observation}
Let $\phi: \Sigma^* \to \Gamma^*$ be a morphism.
If $\phi$ is injective then $\phi$ is non-erasing.
\end{observation}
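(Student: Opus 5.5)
We argue by contrapositive: assume $\phi(a)=\varepsilon$ for some $a\in\Sigma$, and exhibit two distinct words of $\Sigma^*$ that $\phi$ maps to the same word. The natural choice is $u=\varepsilon$ and $v=a$. These are distinct, since $|v|=1\neq 0=|u|$. On the other hand, $\phi(v)=\phi(a)=\varepsilon$, and $\phi(u)=\phi(\varepsilon)=\varepsilon$.

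The only point needing justification is that $\phi(\varepsilon)=\varepsilon$ for any morphism. Take any word $x$. Then $\phi(x)=\phi(x\cdot\varepsilon)=\phi(x)\,\phi(\varepsilon)$. Comparing lengths gives $|\phi(\varepsilon)|=0$.

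Hence $\phi(u)=\phi(v)$ with $u\neq v$, so $\phi$ is not injective. This contradicts the hypothesis, and therefore $\phi(a)\neq\varepsilon$ for every $a\in\Sigma$, i.e., $\phi$ is non-erasing.

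If one prefers to avoid the empty word, the same argument works with $u=b$ and $v=ab$ for any letter $b$, since $\phi(ab)=\phi(a)\phi(b)=\phi(b)$. This requires $\Sigma$ to be non-empty, which already holds because $a\in\Sigma$.

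There is no real obstacle here. The only subtlety is the convention $\phi(\varepsilon)=\varepsilon$, which follows directly from the morphism property as shown above.
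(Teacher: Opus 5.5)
Your proof is correct and takes the same contrapositive route as the paper: exhibit two distinct words with the same image. The paper uses $c$ and $cc$, which is your second variant with $b=a$; it avoids having to verify $\phi(\varepsilon)=\varepsilon$, though your justification of that fact is also fine.
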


\begin{proof}
We proceed by contrapositive.
If $\phi$ is erasing, then there exists $c \in \Sigma$ such that $\phi(c) = \varepsilon$.
Then, $\phi(cc) = \phi(c) = \varepsilon$ but $cc \neq c$, which means $\phi$ is not injective.
\end{proof}

\begin{figure}[t]
\centering
\includegraphics[width=\linewidth]{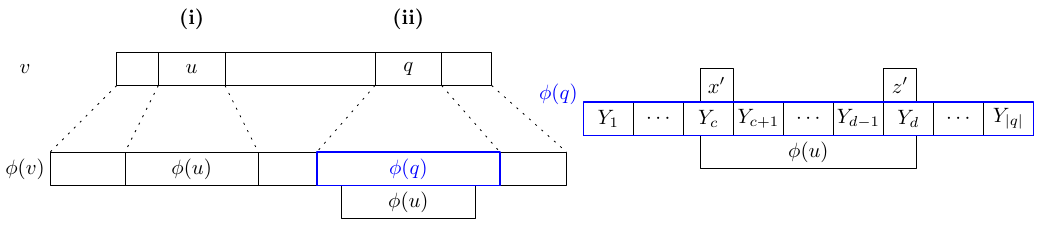}
\caption{Left: illustration of \cref{lem:good-bad-cases}. Right: illustration of the proof of \cref{prop:count-invariance}.}
\label{fig:good-bad-cases}
\end{figure}

The next lemma traces occurrences of $\phi(u)$ in $\phi(v)$ back to
the corresponding occurrences  in $v$,
identifying the only two possible cases, illustrated on the left of \cref{fig:good-bad-cases}.

\begin{lemma}\label{lem:good-bad-cases}
Let $\phi: \Sigma^* \to \Gamma^*$ be an injective morphism and
let $u, v \in \Sigma^*$ be two words.
If $\phi(u)$ is a factor of $\phi(v)$, then at least one of the following two cases holds:
\begin{enumerate}[(i)] %
\item\label{good-case} 
$u$ is a factor of  $v$, or
\item\label{bad-case}
there exists a factor $q$ of $v$ such that 
$q \neq  u$ and $\phi(u)$ is a proper factor of $\phi(q)$.
\end{enumerate}
\end{lemma}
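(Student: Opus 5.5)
Fix an occurrence of $\phi(u)$ in $\phi(v)$, say $\phi(v) = \alpha \cdot \phi(u) \cdot \beta$. Write $v = v[1]\cdots v[n]$, so $\phi(v) = X_1 \cdots X_n$ with $X_j = \phi(v[j])$. Since $\phi$ is injective, it is non-erasing, so every $X_j$ is non-empty. The plan is to look at which blocks $X_j$ the occurrence touches.

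If $u = \varepsilon$, then case~(\ref{good-case}) holds trivially, so assume $u \neq \varepsilon$, hence $\phi(u) \neq \varepsilon$. Let $i$ be the index of the block containing the first letter of the occurrence, and $j$ the index of the block containing its last letter; then $i \le j$. Put $q = v[i \ldots j]$. By construction, $\phi(u)$ occurs in $\phi(q) = X_i \cdots X_j$, beginning inside $X_i$ and ending inside $X_j$. Two cases remain.

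\textbf{Aligned case.} Suppose the occurrence starts exactly at the first position of $X_i$ and ends exactly at the last position of $X_j$. Then $\phi(u) = \phi(q)$, and injectivity of $\phi$ gives $u = q$. Since $q$ is a factor of $v$, case~(\ref{good-case}) holds.

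\textbf{Unaligned case.} Otherwise, the occurrence misses at least one position of $X_i \cdots X_j$. Then $\phi(u)$ is a proper factor of $\phi(q)$, so $|\phi(u)| < |\phi(q)|$. Consequently $\phi(u) \neq \phi(q)$, which forces $u \neq q$, and case~(\ref{bad-case}) holds with this $q$.

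The only delicate points are the bookkeeping choices. Choosing the blocks by the first and last letters of the occurrence, rather than the enclosing boundaries, ensures that $q$ is a factor of $v$ and that the aligned case gives exact equality. Non-erasingness guarantees that each letter position lies in a unique block, so $i$ and $j$ are well defined. I expect this bookkeeping, rather than any real difficulty, to be the main obstacle.
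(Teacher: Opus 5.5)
Your proof is correct and follows the same idea as the paper: cover the occurrence of $\phi(u)$ by the images of a factor $q$ of $v$. Either $\phi(u)=\phi(q)$, and injectivity gives $u=q$, or $\phi(u)$ is a proper factor of $\phi(q)$, which forces $q\neq u$. The paper phrases this as a contradiction and leaves the choice of the covering factor implicit (``the only possible way''), so your explicit $q=v[i\ldots j]$ and your treatment of $u=\varepsilon$ simply make its argument rigorous.
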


\begin{proof}
Assume, for contradiction, that $\phi(u)$ is a factor of $\phi(v)$ while 
neither Case~(\ref{good-case}) nor Case~(\ref{bad-case}) holds.
Since $u$ is not a factor of $v$,
the only possible way for $\phi(u)$ to be a factor of $\phi(v)$
is that there exists a factor $q'$ of $v$ such that $q' \neq u$ and $\phi(q') = \phi(u)$.
However, this contradicts the injectivity of $\phi$.
\end{proof}

\begin{remark}
In the context of \cref{prop:count-invariance},
Case~(\ref{good-case}) corresponds to the situation in which the occurrence count is preserved.
In contrast, Case~(\ref{bad-case}) 
corresponds to the situation in which ``interference'' causes an occurrence-count violation. %
In \cref{lem:good-bad-cases}, only the injectivity of $\phi$ is assumed.
In the proof of \cref{prop:count-invariance}, we show that imposing the \IF condition rules out the undesirable Case~(\ref{bad-case}).
\lipicsEnd
\end{remark}

\begin{proof}[Proof of \cref{prop:count-invariance}]
We aim to show that 
$\occ{u}[v] \geq \occ{\phi(u)}[\phi(v)]$.
We follow the two cases described in \cref{lem:good-bad-cases}.
In particular, we prove that Case~(\ref{bad-case}) is impossible and 
only Case~(\ref{good-case}) is possible under the \IF condition in \cref{thm:occ-preserve}.
Recall that $q$ was introduced in \cref{lem:good-bad-cases}.
First observe that if $|q| = 1$, i.e., $q$ is a symbol,  
then Case~(\ref{bad-case}) is impossible, 
as it would force one of the following two contradictions.
For any $1 \leq i \leq |v|$, 
\begin{itemize}
\item 
if $\phi(u)$ is a proper prefix or a proper suffix of $\phi(v[i])$, 
then $\phi(u)$ admits an interfered image factorization where 
$x = y = \varepsilon$ and $z$ is a proper prefix of $\phi(v[i])$, or 
$x$ is a proper suffix of $\phi(v[i])$ and $y = z = \varepsilon$;
\item 
if $\phi(u)$ is a proper factor of $\phi(v[i])$ but is neither a prefix nor a suffix of $\phi(v[i])$, then $\phi(u)$ is an inner image factor.
\end{itemize}
Thus, we know that $|q|>1$.
We now show that if $\phi$ is \IF on $\{u\}$ 
then Case~(\ref{bad-case}) is impossible.
We proceed by contrapositive: assume  Case~(\ref{bad-case}) does occur, and we will show that $\phi$ is not \IF on $\{u\}$.
Let $\phi(q) = Y_1 \cdots Y_{|q|}$ where  $Y_{i} \in \images{\phi}$ for $1 \leq i \leq |q|$.
If $\phi(u)$ is a proper factor of $\phi(q)$,
then there exists a factor of $\phi(q)$ of the form  $Y_{c} \cdots Y_{d}$
(for some $1 \leq c < d \leq |q|$)  such that 
$\phi(u)$ is a proper factor of $Y_{c} \cdots Y_{d}$ and 
$Y_{c+1} \cdots Y_{d-1}$ is a proper factor of $\phi(u)$.
(We define $Y_{c+1} \cdots Y_{d-1}$ to be $\varepsilon$ when $d = c + 1$.)
Hence, we have 
$\phi(u) = x' \cdot Y_{c+1} \cdots Y_{d-1} \cdot z'$, 
where $x'$ is a suffix of $Y_c$, 
$z'$ is a prefix of $Y_d$, and 
$x' \cdot z' \neq Y_c \cdot Y_d$ (illustrated on the right of \cref{fig:good-bad-cases}).
Now we are ready to show that $\phi$ is not \IF on $\{u\}$ by
constructing an interfered image factorization of 
 $\phi(u) = x \cdot y \cdot z$,
where
\[
x = 
\begin{cases}
\varepsilon \text{~if~} x' = Y_c, \\
x' \text{~otherwise};
\end{cases}
\hspace{-0.8em}
z =
\begin{cases}
\varepsilon \text{~if~} z' = Y_d, \\
z' \text{~otherwise};
\end{cases}
\hspace{-0.8em}
y =
\begin{cases}
Y_{c+1} \cdots Y_{d-1} \text{~if~} x = x' \text{~and~} z = z', \\
Y_{c} \cdots Y_{d-1} \text{~if~} x = \varepsilon \text{~and~} z = z', \\
Y_{c+1} \cdots Y_{d} \text{~if~} x = x' \text{~and~} z = \varepsilon.
\end{cases}
\]
Thus, we have proved by contrapositive that Case~(\ref{bad-case}) is impossible 
when $\phi$ is \IF on $\{u\}$.
Therefore, only Case~(\ref{good-case}) is possible under the conditions in \cref{thm:occ-preserve}.
In other words, we have shown that
if $\phi(u)$ is a factor of $\phi(v)$,
then $u$ is a factor of $v$.
This implies that $\occ{u}[v] \geq \occ{\phi(u)}[\phi(v)]$.
Combining with \cref{lem:count:leq}, we conclude that
$\occ{u}[v] = \occ{\phi(u)}[\phi(v)]$.
\end{proof}

Finally, \cref{thm:occ-preserve} follows from \cref{prop:count-invariance} by induction.

\section{Applications of Occurrence-Preserving Morphisms}

In this section, we present several applications of the characterization of occurrence-preserving morphisms established in the previous section. 
As an immediate consequence of \cref{thm:occ-preserve}, 
occurrence preservation holds at a more granular level:
not only is the number of occurrences preserved, but there also exists a bijection
between the starting positions of the occurrences of $u$ in $v$ and those of $\phi^k(u)$ in $\phi^k(v)$.

\begin{lemma}
Let $\phi: \Sigma^* \to \Gamma^*$ be an injective morphism,
let $u, v \in \Sigma^*$ be two words,
and let $k \geq 1$ be an integer.
If $\phi$ is interference-free on $\bigcup_{i=0}^{k-1} \{ \phi^i(u) \}$,
then,
$p \in \Occ{u}[v]$ if and only if $|\phi^k(v[1 \ldots p - 1])|+1 \in \Occ{\phi^k(u)}[\phi^k(v)]$.
\end{lemma}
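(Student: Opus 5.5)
Define the map $\pi_k(p) = |\phi^k(v[1 \ldots p-1])|+1$ on positions $1 \le p \le |v|$, and prove the two directions separately. Throughout, use that $\phi^k$ is a morphism and is non-erasing (injectivity implies non-erasing, as observed earlier).

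\textbf{Forward direction.} Suppose $p \in \Occ{u}[v]$. Write $v = v[1\ldots p-1]\cdot u \cdot \suf{v}$ and apply $\phi^k$ to get
\[
\phi^k(v) = \phi^k(v[1\ldots p-1]) \cdot \phi^k(u) \cdot \phi^k(\suf{v}).
\]
This exhibits an occurrence of $\phi^k(u)$ starting at $\pi_k(p)$, exactly as in the proof of \cref{lem:count:leq}; no interference-freeness is needed here. Moreover, $\pi_k$ is strictly increasing in $p$ because $\phi^k$ is non-erasing. Hence $\pi_k$ restricted to $\Occ{u}[v]$ is an injection into $\Occ{\phi^k(u)}[\phi^k(v)]$.

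\textbf{Backward direction.} I would use counting rather than trace occurrences back directly. By \cref{thm:occ-preserve}, the two sets $\Occ{u}[v]$ and $\Occ{\phi^k(u)}[\phi^k(v)]$ have the same finite cardinality. An injection between finite sets of equal size is a bijection, so the image $\pi_k(\Occ{u}[v])$ is all of $\Occ{\phi^k(u)}[\phi^k(v)]$.

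It remains to handle a position $p$ with $\pi_k(p) \in \Occ{\phi^k(u)}[\phi^k(v)]$. By surjectivity, $\pi_k(p) = \pi_k(p')$ for some $p' \in \Occ{u}[v]$, and strict monotonicity of $\pi_k$ gives $p = p'$, so $p \in \Occ{u}[v]$.

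The main subtlety is the domain of $p$, which is where I expect the only real care to be needed. If $p$ ranges beyond $[1,|v|]$, or if $u = \varepsilon$ with the convention that $\varepsilon$ occurs at $|v|+1$ positions, then $\pi_k$ should be considered on $1 \le p \le |v|+1$. It is still strictly increasing there, and for $u=\varepsilon$ both sides are all positions, so the statement holds trivially. With that convention fixed, the argument above is complete.
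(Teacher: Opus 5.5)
Your proof is correct and follows the paper's argument. The forward direction applies $\phi^k$ to the factorization $v = v[1\ldots p-1]\cdot u\cdot y$, and the backward direction combines the resulting injection with the equal counts from \cref{thm:occ-preserve}. You also make explicit two points the paper leaves implicit: that $p \mapsto |\phi^k(v[1\ldots p-1])|+1$ is strictly increasing because $\phi^k$ is non-erasing, which is what pulls a matching position back to $p$ itself, and the separate treatment of $u=\varepsilon$. That second point is warranted, since interference-freeness constrains only non-empty words, so for $u=\varepsilon$ the counts $|v|+1$ and $|\phi^k(v)|+1$ need not agree.
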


\begin{proof}
$(\Rightarrow)$
If $u$ occurs at position $p$ in $v$, 
then $v$ can be factorized as $v = v[1 \ldots p -1] \cdot u \cdot y$
for $y \in \Sigma^*$.
It follows that
$\phi^k(v) = \phi^k(v[1 \ldots p -1]) \cdot \phi^k(u) \cdot \phi^k(y)$,
 and thus $\phi^k(u)$ occurs at position $|\phi^k(v[1 \ldots p -1])|+1$ in $\phi^k(v)$.

$(\Leftarrow)$
From the ``$\Rightarrow$'' direction 
we can infer that  $\occ{u}[v] \leq \occ{\phi^k(u)}[\phi^k(v)]$.
By \cref{thm:occ-preserve}, we know that 
$\occ{u}[v] = \occ{\phi^k(u)}[\phi^k(v)]$.
Thus, there does not exist an occurrence $p'$ of $\phi^k(u)$
in $\phi^k(v)$ such that 
$p' \neq |\phi^k(v[1 \ldots p -1])|+1$
for any $p \in \Occ{u}[v]$.
Therefore, the ``$\Leftarrow$'' direction also holds.
\end{proof}

Note that if $\phi$ is $\ell$-uniform,
then $|\phi^k(v[1 \ldots p -1])|+1 = \ell^k (p-1)+1$.

\subsection*{MUSs of Fibonacci and Thue-Morse Words} %

In this subsection, as another application of \cref{thm:occ-preserve},
we identify the minimal unique substrings (MUSs) of Fibonacci and Thue-Morse words.

\subparagraph*{MUSs of Fibonacci words.}

We first state two observations on $F_i^{\lhd} = F_i[1 \ldots f_i-1]$.

\begin{observation}\label{lem:occ-lpp-fk} %
For $i \geq k \geq 4$,
$\occ{F_k^\lhd}[F_i] = \occ{F_k}[F_i]$.
\end{observation}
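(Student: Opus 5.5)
The inequality $\occ{F_k}[F_i] \leq \occ{F_k^\lhd}[F_i]$ is immediate, since $F_k^\lhd$ is a prefix of $F_k$. The plan is therefore to show the converse injection. For every $p \in \Occ{F_k^\lhd}[F_i]$, I will show two things: (a) $p + f_k - 1 \leq f_i$, so the occurrence is not flush against the right end of $F_i$; and (b) $F_i[p+f_k-1] = F_k[f_k]$. Together these say that each occurrence of $F_k^\lhd$ extends to an occurrence of $F_k$ at the same position.

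By \cref{obs:fib-properties}, write $F_k = G_k \Delta_k$. Then $F_k^\lhd = G_k c$, where $c = \Delta_k[1]$; thus $c = \a$ for odd $k$ and $c = \b$ for even $k$. I will use the standard fact that $G_k$ is a palindrome (provable by induction from $F_i = F_{i-1}F_{i-2}$). Hence $G_k$, which begins with $\a\b$, ends with $\b\a$, so $F_k^\lhd$ ends with $\b\a c$.

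For (a), suppose $F_k^\lhd$ were a suffix of $F_i$. Since $F_i$ ends with $\Delta_i$, comparing the last two letters forces $\a c = \Delta_i$. This is only possible when $c = \b$ and $\Delta_i = \a\b$, i.e.\ $k$ even and $i$ odd. In that case $F_k^\lhd$ ends with $\b\a\b$. However, for odd $i \geq 5$ the word $F_i$ ends with $\a\a\b$: indeed $F_i = F_{i-1}F_{i-2}$, $F_{i-2}$ ends with $\a\b$, and $F_{i-1}$ ends with $\a$ when $i-2 \geq 3$ (small cases checked by hand). This contradicts the suffix assumption, so (a) holds.

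For (b), I would argue that $F_k^\lhd$ is not right special in $F_i$, i.e.\ $F_k^\lhd\a$ and $F_k^\lhd\b$ cannot both occur. Since $F_k^\lhd\b\b$-type patterns are already excluded by the absence of $\b\b$ in \cref{obs:fib-properties}, only one case needs real work. Take $k$ odd, so $c=\a$: the bad extension would be $G_k\a\a$, and we must rule out $G_k\a\a$ occurring. For $k$ even, $c=\b$ and the forbidden extension $G_k\b\b$ contains $\b\b$, so that case is immediate. For the remaining case I plan to use the well-known characterization that the right special factors of the Fibonacci word are exactly the reversals of its prefixes. Because $G_k$ is a palindrome, a right special factor of length $f_k-1$ would have to be $\a G_k$, which is not equal to $G_k\a$ since $G_k$ is not a power of $\a$. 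Should I prefer to stay within the paper's tools, the alternative is an induction on $i$ that desubstitutes via $\varphi$ and uses $\Occ{G_{i-1}}[F_i] = \{1, f_{i-2}+1\}$ from \cref{obs:fib-properties} to locate the occurrences. This step (b) is the main obstacle; I would try the right-special-factor argument first, since it is short, and fall back on the desubstitution induction if a self-contained proof is needed. Combining (a) and (b) gives the desired equality.
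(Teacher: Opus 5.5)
\textbf{Step (a) fails at $k=4$, and no argument can repair it, because the statement is false there.}

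You use that $G_k$ begins with $\mathtt{ab}$, hence (as a palindrome) ends with $\mathtt{ba}$, so that $F_k^\lhd$ ends with $\mathtt{ba}c$. This requires $|G_k| = f_k-2 \geq 2$, i.e.\ $k \geq 5$. For $k=4$ we have $G_4=\mathtt{a}$ and $F_4^\lhd=\mathtt{ab}$, which is too short to end with $\mathtt{bab}$. In fact $\mathtt{ab}=\Delta_i$ is a suffix of $F_i$ for every odd $i \geq 5$. That suffix occurrence of $F_4^\lhd$ cannot extend to an occurrence of $F_4$, so the equality breaks. For example, in $F_5=\mathtt{abaab}$ the word $F_4^\lhd=\mathtt{ab}$ occurs at positions $1$ and $4$, but $F_4=\mathtt{aba}$ occurs only at position $1$. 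Hence $\occ{F_4^\lhd}[F_5]=2\neq 1=\occ{F_4}[F_5]$, and likewise $5$ versus $4$ in $F_7$.

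So the observation is false for $k=4$ and odd $i$, and the hypothesis must be strengthened to $k\geq 5$ (or $k=4$ with $i$ even). Under that hypothesis, your steps (a) and (b) together give a complete proof. The paper's own proof has the same blind spot. It begins from ``each occurrence of $F_k^\lhd$ is followed by either $\mathtt{a}$ or $\mathtt{b}$'', which silently discards exactly the suffix case that your step (a) correctly isolates.

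\textbf{Step (b) is much heavier than it needs to be.} You already derived that for odd $k\geq 5$ the word $F_k^\lhd$ ends with $\mathtt{baa}$. So the bad extension $F_k^\lhd\mathtt{a}=G_k\mathtt{aa}$ contains $\mathtt{aaa}$, which \cref{obs:fib-properties} excludes, just as $\mathtt{bb}$ disposes of the even case. This forbidden-factor argument is the paper's entire proof. Your detour through the characterization of right special factors of the infinite Fibonacci word as reversed prefixes, together with the palindromicity of $G_k$, is correct. However, it imports external Sturmian machinery to prove something that follows from the forbidden factors you were already using.
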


\begin{proof}
First observe that each occurrence of $F_k^\lhd$ is followed by either $\a$ or $\b$.
Hence 
$\occ{F_k^\lhd}[F_i]  = \occ{F_k^\lhd\cdot \a}[F_i] + \occ{F_k^\lhd\cdot \b}[F_i]$.
By \cref{obs:fib-properties},
when $k$ is even, $F_k$ ends with $\a\b\a$,
so $F_k^\lhd$ ends with $\a\b$ and $F_k^\lhd \cdot \a = F_k$.
Further, by \cref{obs:fib-properties}, $\b\b$ does not occur in $F_i$,
which means $\occ{F_k^\lhd\cdot \b}[F_i] = 0$.
Thus, $\occ{F_k^\lhd}[F_i] 
= \occ{F_k^\lhd\cdot \a}[F_i] = \occ{F_k}[F_i]$.
The case where $k$ is odd can be proved analogously using the facts that $\a\a\a$ does not occur in $F_i$ and $F_k$ ends with $\a\a\b$.
\end{proof}

\begin{observation}\label{obs:if-lhd}
$\varphi$ is \IF on $\{ F_{i}^{\lhd} :  i \geq 4  \text{~and~} i \text{~is odd}\}$.
\end{observation}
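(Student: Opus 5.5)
The plan is to apply \cref{lem:if-barrier}. I will write each $F_i^{\lhd}$ (with $i \geq 5$ odd) as $b_L \cdot q \cdot b_R$, where $b_L = \a\b$ and $b_R = \a\a$. Then it suffices to check that $\varphi$ is \IF on the two-element set $\{\a\b, \a\a\}$.

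\textbf{Prefix and suffix of $F_i^{\lhd}$.} Every $F_i$ with $i \geq 3$ starts with $\a\b$, since $F_3 = \a\b$ and $F_i = F_{i-1} F_{i-2}$. For the suffix, use $F_i = F_{i-1} \cdot F_{i-2}$ with $i$ odd and $i \geq 5$. Here $F_{i-2}$ is odd with $i-2 \geq 3$, so it ends with $\Delta_1 = \a\b$ by \cref{obs:fib-properties}; the base case $F_3 = \a\b$ is checked directly. Also $F_{i-1}$ is even with $i-1 \geq 4$, so it ends with $\a$. Hence $F_i$ ends with $\a\a\b$, and therefore $F_i^{\lhd}$ ends with $\a\a$. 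Since $|F_i^{\lhd}| = f_i - 1 \geq 4$, the prefix $\a\b$ and suffix $\a\a$ do not overlap. For example, $F_5^{\lhd} = \a\b\a\a$ gives $q = \varepsilon$. So the decomposition $u = \a\b \cdot q \cdot \a\a$ exists.

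\textbf{Interference-freeness on the barriers.} The images are $\a\b$ and $\a$.

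For $\a\b$: we have $\varphi(\a\b) = \a\b\a$. It starts with $\a$, so the only allowed $x$ is $\varepsilon$, because the only non-empty proper suffix of an image is $\b$. The only non-empty proper prefix of an image is $\a$, so take $z = \a$. This leaves $y = \a\b$, which is an image. So $\varphi(\a\b) = \varepsilon \cdot \a\b \cdot \a$ is an interfered factorization.

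So $\a\b$ is \emph{not} a valid barrier, and the paper's earlier claim that $\varphi$ is \IF on $\{\a\b\}$ has to be read as concerning words $w$ with $\varphi(w)$ of that shape, not $\varphi(\a\b)$ itself. I will therefore adjust $b_L$ and use $b_L = \a\b\a\a$, which is a prefix of $F_i$ for $i \geq 5$ and hence of $F_i^{\lhd}$ when $f_i - 1 \geq 4$. Then $\varphi(\a\b\a\a) = \a\b\a\a\b\a\b$.

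For $b_R = \a\a$: we have $\varphi(\a\a) = \a\b\a\b$. It starts with $\a$, which forces $x = \varepsilon$, and it ends with $\b$, which forces $z = \varepsilon$. But $x \cdot z \neq \varepsilon$ is required, so there is no interfered factorization. It is also too long to be an inner image factor. So $\varphi$ is \IF on $\{\a\a\}$.

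For $\a\b\a\a$, the same argument applies. Its image $\a\b\a\a\b\a\b$ starts with $\a$, which forces $x = \varepsilon$, and ends with $\b$, which forces $z = \varepsilon$. It is also longer than every image. So $\varphi$ is \IF on $\{\a\b\a\a\}$.

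\textbf{Handling the smallest case.} With $b_L = \a\b\a\a$ and $b_R = \a\a$, the two barriers would overlap when $|F_i^{\lhd}| < 6$, that is, for $i = 5$. In that case $F_5^{\lhd} = \a\b\a\a$ is itself $b_L$, so take $q = b_R = \varepsilon$. The empty word satisfies the \IF condition vacuously, so \cref{lem:if-barrier} still applies. For odd $i \geq 7$ we have $f_i - 1 \geq 12$, so the decomposition $\a\b\a\a \cdot q \cdot \a\a$ is valid.

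\textbf{Conclusion and main obstacle.} \cref{lem:if-barrier} then yields interference-freeness on each $F_i^{\lhd}$, which is the claim of \cref{obs:if-lhd}. The key step is choosing barriers whose images both begin with $\a$ and end with $\b$. These boundary letters block every non-trivial proper suffix $x$ and proper prefix $z$ at once, and verifying this is the main point to get right.
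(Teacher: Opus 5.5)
You follow the paper's strategy: apply \cref{lem:if-barrier} with barriers whose images begin with $\a$ and end with $\b$. This rules out every non-empty $x$ and $z$, because the only non-empty proper suffix of an image is $\b$ and the only non-empty proper prefix is $\a$. You are also right that $\varphi$ is not interference-free on $\{\a\b\}$, since $\varphi(\a\b)=\a\b\cdot\a$. What the paper's argument (via \cref{cor:fib-if-even}) actually uses is that $\varphi(F_i^{\lhd})=F_{i+1}^{\lhd}$ begins and ends with $\a\b=\varphi(\a)$. In other words, the intended barrier is the single letter $\a$ at each end of $F_i^{\lhd}$.

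One step does not follow as written. From ``$F_{i-1}$ ends with $\a$ and $F_{i-2}$ ends with $\a\b$'' you conclude that $F_i=F_{i-1}F_{i-2}$ ends with $\a\a\b$. That inference is valid only when $|F_{i-2}|=2$, i.e.\ $i=5$. For $i\geq 7$, the letter before the final $\a\b$ of $F_i$ lies inside $F_{i-2}$ and is unrelated to the last letter of $F_{i-1}$. The claim itself is true, but it needs an induction: $F_{i-2}$ is a suffix of $F_i$, so every odd $F_i$ with $i\geq 5$ ends with $F_5=\a\b\a\a\b$.

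A simpler fix is to take $b_L=b_R=\a$. The word $\varphi(\a)=\a\b$ begins with $\a$, ends with $\b$, and is not a proper factor of any image, so $\varphi$ is interference-free on $\{\a\}$. Moreover, $F_i^{\lhd}$ begins and ends with $\a$, because $F_i$ ends with $\Delta_1=\a\b$. Since $|F_i^{\lhd}|\geq 4$, this also removes your special case $i=5$.

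Your justification of that special case is not valid in general. You argue that the empty word is vacuously interference-free, so the lemma still applies. But the lemma fails with an empty barrier: $\varphi$ is interference-free on $\{\a\}$ and on $\{\varepsilon\}$, yet not on $\{\a\b\}$, even though $\a\b=\a\cdot\b\cdot\varepsilon$. It does no harm here only because $F_5^{\lhd}=\a\b\a\a$ is itself your $b_L$, so the conclusion holds directly.
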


\begin{proof}
Since $i$ is odd, $F_i$ ends with $\a\b$ and $F_{i+1}$ ends with $\b\a$ by \cref{obs:fib-properties}.
With $F_i = F_i^\lhd \cdot \b$,
applying $\varphi$, we obtain
$F_{i+1}=\varphi(F_i)=\varphi(F_i^\lhd)\varphi(\b)=\varphi(F_i^\lhd)\cdot \a$.
Since $\varphi(\b) = \a$,  we have $\varphi(F_{i}^{\lhd}) = F_{i+1}^{\lhd}$.
The claim then follows using a similar argument for \cref{cor:fib-if-even}
and the fact that $\a\b$ is both a prefix and a suffix of $\varphi(F_{i}^{\lhd})$.
\end{proof}

Occurrences of smaller-order Fibonacci words 
(i.e., occurrences of $F_{k}$ in $F_{i}$ for $k < i$) 
have been studied previously~\cite{conf/cpm/2025/guo}.
Using our characterization of occurrence-preserving morphisms, we show that $\occ{F_{i-d}}[F_i]$ is independent of $i$ once the offset $d$ is fixed.

\begin{lemma}\label{lem:occ-f-i-d}
For $i,j,d$ with $i \geq j$ and $j-d \geq 4$,
$\occ{F_{i-d}}[F_i] = \occ{F_{j-d}}[F_{j}]$.
\end{lemma}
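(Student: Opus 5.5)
It suffices to show that $\occ{F_{i-d}}[F_i] = \occ{F_{i+1-d}}[F_{i+1}]$ whenever $i-d \geq 4$. Induction on $i$ then connects the value at $j$ to the value at any $i \geq j$. Set $k = i-d \geq 4$. We have $\varphi(F_k) = F_{k+1}$ and $\varphi(F_i) = F_{i+1}$, so the identity reads $\occ{F_k}[F_i] = \occ{\varphi(F_k)}[\varphi(F_i)]$. This is exactly the form of \cref{prop:count-invariance} with $u = F_k$, $v = F_i$ and the Fibonacci morphism $\varphi$. The morphism $\varphi$ is injective, since it is recognizable; we may also note that $\{\a\b,\a\}$ is a prefix code read in reverse.

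We split according to the parity of $k$. If $k$ is even, then $F_k \in \mathcal{L}_F$. By \cref{cor:fib-if-even}, $\varphi$ is interference-free on $\{F_k\}$, and \cref{prop:count-invariance} gives the identity directly.

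If $k$ is odd, \cref{obs:fib-not-if-odd} shows that $\varphi$ is not interference-free on $F_k$, so we go through the prefix $F_k^\lhd$ instead. By \cref{lem:occ-lpp-fk}, which applies since $i \geq k \geq 4$, we have $\occ{F_k}[F_i] = \occ{F_k^\lhd}[F_i]$. By \cref{obs:if-lhd}, $\varphi$ is interference-free on $\{F_k^\lhd\}$, and \cref{prop:count-invariance} then gives $\occ{F_k^\lhd}[F_i] = \occ{\varphi(F_k^\lhd)}[F_{i+1}]$. The proof of \cref{obs:if-lhd} shows that $\varphi(F_k^\lhd) = F_{k+1}^\lhd$ for odd $k$. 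Applying \cref{lem:occ-lpp-fk} once more, now with $i+1 \geq k+1 \geq 4$, yields $\occ{F_{k+1}^\lhd}[F_{i+1}] = \occ{F_{k+1}}[F_{i+1}]$. Chaining these four equalities gives $\occ{F_k}[F_i] = \occ{F_{k+1}}[F_{i+1}]$, as required.

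The only delicate point is this odd case, where direct interference-freeness fails. It is handled by the detour through $F_k^\lhd$: we have to check that the index hypotheses of \cref{lem:occ-lpp-fk} hold at both ends of the chain, and that $\varphi$ maps $F_k^\lhd$ exactly onto $F_{k+1}^\lhd$. Both checks are already available from the earlier observations.
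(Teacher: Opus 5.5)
Your proof is correct and follows the paper's own argument: reduce to a single shift of both indices, settle the even case by \cref{cor:fib-if-even} plus occurrence preservation, and settle the odd case by the detour through $F_k^\lhd$ using \cref{lem:occ-lpp-fk} and \cref{obs:if-lhd}. You split on the parity of the index $k$ of the word that $\varphi$ is applied to, and this is the right split; the paper steps from $i$ down to $i-1$ but splits on the parity of $i-d$, which swaps the two cases. Your bookkeeping also invokes \cref{lem:occ-lpp-fk} only at odd $k\geq 5$ and even $k+1\geq 6$, which matters because that observation is false for $k=4$ and odd $i$ (e.g., $\a\b$ occurs twice in $F_5=\a\b\a\a\b$ but $\a\b\a$ only once).
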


\begin{proof}
It suffices to prove the case $j=i-1$,
i.e., $\occ{F_{i-d}}[F_i] = \occ{F_{i-1-d}}[F_{i-1}]$,
and the result then follows.
When $i-d$ is even, the desired equality holds by \cref{cor:fib-if-even} and \cref{thm:occ-preserve}.
For odd $i-d \geq 4$, we have
\[
\occ{F_{i-d}}[F_i] 
= \occ{F_{i-d}^{\lhd}}[F_i] 
= \occ{F_{i-1-d}^{\lhd}}[F_{i-1}] 
= \occ{F_{{i-1-d}}}[F_{i-1}],
\]
where the first and last equalities hold by \cref{lem:occ-lpp-fk}, while the second equality holds by \cref{obs:if-lhd} and \cref{thm:occ-preserve}.
This completes the proof.
\end{proof}

\begin{figure}[t]
\centering
\includegraphics[width=0.55\linewidth]{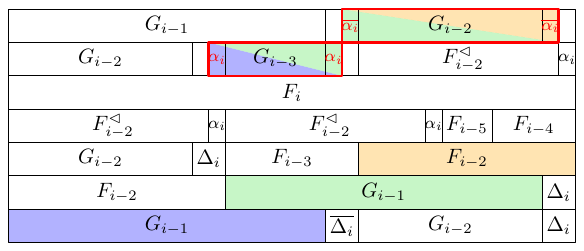}
\caption{
Illustration of \cref{thm:mus-fib}.
In the top two factorizations of $F_i$, each MUS is highlighted in a red rectangle.
In the bottom three factorizations of $F_i$, each net occurrence is colored.
Moreover,  the colors of the two consecutive net occurrences
together are used as the colors to 
highlight the occurrence of the MUS they correspond to. 
The uncolored factorization is used in the proof.
}
\label{fig:fib-mus-no}
\end{figure}

\begin{theorem}\label{thm:mus-fib}
For each $i \geq 6$, let $\alpha_i = \a$ if $i$ is even, and $\alpha_i = \b$ if $i$ is odd. Then,
$
\mus{F_i} = \{  \alpha_i  \, G_{i-3} \, \alpha_i,  \, \flip{\alpha_i} \, G_{i-2}  \, \flip{\alpha_i}   \}
$.
\end{theorem}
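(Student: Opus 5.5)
The plan is to verify the two defining conditions of a MUS for each of the two candidate words, and then to prove that no other MUS exists. Throughout, $i$ is even, so $\alpha_i=\a$; the odd case is symmetric, with $\a$ and $\b$ swapped and the parities of the $\Delta$'s swapped. The first step is to locate explicit occurrences. From $F_i = F_{i-1}F_{i-2} = F_{i-2}F_{i-3}F_{i-2}$ and $F_j = G_j\Delta_j$ (\cref{obs:fib-properties}) we get
\[
F_i = G_{i-2}\,\b\a\,G_{i-3}\,\a\b\,G_{i-2}\,\b\a .
\]
This displays an occurrence of $\a\,G_{i-3}\,\a$ at position $f_{i-2}$ and an occurrence of $\b\,G_{i-2}\,\b$ at position $f_{i-2}+f_{i-3}$. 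For $i=6$, the word $F_6=\texttt{abaababa}$ indeed contains $\a\a$ and $\b\a\b$ exactly once each. For $i=6,7$ (and possibly $8$, to clear the index condition $i\ge 7$ of \cref{obs:fib-properties}) I will check the statement by hand and use these as base cases.

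\textbf{Uniqueness.} I will prove $\occ{\a G_{i-3}\a}[F_i]=1$ and $\occ{\b G_{i-2}\b}[F_i]=1$ by induction on $i$ using \cref{thm:occ-preserve}, in the same way \cref{lem:occ-f-i-d} was proved. To do this I will compute $\varphi$ of each candidate MUS of $F_i$. Using $\varphi(\a)=\a\b$, $\varphi(\b)=\a$, and the identity relating $\varphi(G_j)$ to $G_{j+1}$ (obtained from $\varphi(F_j)=F_{j+1}$ and $\varphi(\Delta_j)\in\{\a\b\a,\a\a\b\}$), I expect $\varphi(\alpha_i G_{i-3}\alpha_i)$ to equal, up to a deterministic one-letter extension forced by the absence of $\b\b$ and $\a\a\a$, the candidate $\flip{\alpha_{i+1}}G_{i-1}\flip{\alpha_{i+1}}$ of $F_{i+1}$. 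In that case counts transfer just as in \cref{lem:occ-lpp-fk}. Interference-freeness of $\varphi$ on these words will come from \cref{lem:if-barrier} with barriers $\a\b$ at both ends, as in \cref{cor:fib-if-even} and \cref{obs:if-lhd}. When a candidate ends in $\b$, I will first peel off or extend a letter, as in \cref{obs:if-lhd}, so that the barrier argument applies.

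\textbf{Minimality.} For $\b G_{i-2}\b$, the proper factors $\b G_{i-2}$ and $G_{i-2}\b$ must be repeated. Since $G_{i-1}=G_{i-2}\b\a G_{i-3}$ occurs at positions $1$ and $f_{i-2}+1$ by \cref{obs:fib-properties}, the word $G_{i-2}\b$ occurs at both of these positions. Likewise, $\b G_{i-2}$ occurs inside the second copy of $F_{i-2}$ and also straddling the boundary. For $\a G_{i-3}\a$, the factor $G_{i-3}$ sits inside $G_{i-1}$, and I will use the same two occurrences of $G_{i-1}$ to exhibit two occurrences each of $\a G_{i-3}$ and $G_{i-3}\a$.

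\textbf{Completeness, the main obstacle.} We must also show there is no third MUS. I would use the standard fact that every unique factor contains a MUS, and that MUS occurrences are pairwise non-nested, so they are ordered by both start and end position. It then suffices to show that every factor of $F_i$ whose interval contains neither of the two displayed occurrences is repeated. Any such interval avoids at least one letter of each of the two occurrences, so it lies inside one of a few maximal windows. These windows are $F_i$ with a short prefix or suffix cut off, or the middle region $\b\a G_{i-3}\a\b G_{i-2}$ with its ends trimmed. I would show that each window is a factor of $G_{i-1}$, or of its reverse-type counterpart, that occurs twice at distinct positions, using $\Occ{G_{i-1}}[F_i]=\{1,f_{i-2}+1\}$ together with the palindromic structure of $G_j$. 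The delicate part is the bookkeeping of these windows and the off-by-one alignment between the two copies. If that becomes unwieldy, I would instead transfer completeness inductively: $\varphi$ maps MUS-containing intervals of $F_i$ onto those of $F_{i+1}$, and repeated factors stay repeated by \cref{lem:count:leq}.
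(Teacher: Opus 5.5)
Your overall plan is sound. For minimality and completeness it is essentially the paper's argument: verify the two candidates, note that any further MUS would need an occurrence containing neither candidate occurrence, and show every such interval carries a repeated factor using $\Occ{G_{i-1}}[F_i]=\{1,f_{i-2}+1\}$. The genuinely different part is uniqueness. The paper gets $\occ{F_{i-2}}[F_i]=3$ from \cref{lem:occ-f-i-d}, transfers this to $F_{i-2}^{\lhd}=G_{i-2}\flip{\alpha_i}$ by \cref{lem:occ-lpp-fk}, and then inspects which of the three occurrences is preceded by $\flip{\alpha_i}$. You instead push uniqueness of the candidate words themselves from $F_i$ to $F_{i+1}$ with \cref{thm:occ-preserve}, starting at $F_6$. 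This works and avoids the figure-based inspection. It costs you the identity $G_{j+1}=\varphi(G_j)\,\a$, which follows from $\varphi(\Delta_j)=\a\,\Delta_{j+1}$, plus some boundary checks; the paper's route instead recycles lemmas it has already proved. You also do not need barriers. Since $\varphi(u)$ always begins with $\a$, the only possible interference is a trailing $z=\a$, so $\varphi$ is \IF on a non-empty $u$ exactly when $u$ ends in $\a$.

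Three specific claims need repair.

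First, the pairing you expect is wrong. For even $i$, $\varphi(\a G_{i-3}\a)=\a\b\,G_{i-2}\,\b$, which is $\a$ followed by $\alpha_{i+1}G_{i-2}\alpha_{i+1}$. It is not a one-letter variant of $\flip{\alpha_{i+1}}G_{i-1}\flip{\alpha_{i+1}}$; that word comes from the other candidate, via $\varphi(\b G_{i-2}\b\,\a)=\a G_{i-1}\a\,\b$. So each candidate maps to the candidate of the same type, and the induction still closes. (Note that $\flip{\alpha_i}G_{i-2}\flip{\alpha_i}=\alpha_{i+1}G_{i-2}\alpha_{i+1}$, so consecutive MUS sets share a word.) For candidates ending in $\b$ you must extend by $\a$ rather than peel, since $\b G_{i-2}$ is repeated. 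Each extension must also be checked at the ends of the word; for instance, for odd $i$ you need that $\b G_{i-3}\b$ is not a suffix of $F_i$.

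Second, $\b G_{i-2}$ does not occur inside the last copy of $F_{i-2}$. It is a suffix of the occurrence of $G_{i-1}$ at $f_{i-2}+1$ (both end at $f_i-2$), so it also occurs at $f_{i-3}$.

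Third, your windows are misidentified: $F_i$ minus a short prefix or suffix still contains a candidate occurrence and is therefore unique. What you are missing is that the two occurrences $[f_{i-2},f_{i-1}-1]$ and $[f_{i-1},f_i-1]$ are adjacent. Hence the maximal intervals containing neither are $[1,f_{i-1}-2]$ and $[f_{i-2}+1,f_i-2]$, the two occurrences of $G_{i-1}$, together with $[f_{i-1}+1,f_i]$, the suffix $F_{i-2}$, which is also a prefix of $F_i$. These are exactly the paper's three cases; neither palindromicity nor your inductive fallback is needed.
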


\begin{proof}
We proceed to show the set equality by proving 
subset relations in both directions.

We first show that 
$\flip{\alpha_i} \, G_{i-2} \, \flip{\alpha_i} \in \mus{F_i}$.
Its unique occurrence is outlined in red in \cref{fig:fib-mus-no}.
By \cref{lem:occ-f-i-d}, we have $\occ{F_{i-2}}[F_i] = \occ{F_{4}}[F_6] = 3$.
The factorizations 
$F_i = F_{i-2}  F_{i-2}  F_{i-5}   F_{i-4}$ and 
$F_i = F_{i-2}  F_{i-3}  F_{i-2}$
reveal all three occurrences of $F_{i-2}$.
(In \cref{fig:fib-mus-no}, these two factorizations correspond to
$F_i = F_{i-2}^\lhd \alpha_i  F_{i-2}^\lhd \alpha_i  F_{i-5}   F_{i-4}$ and 
$F_i = G_{i-2} \Delta_i  F_{i-3}  F_{i-2}$.)
Moreover, by \cref{lem:occ-lpp-fk}, we have $\occ{F_{i-2}^{\lhd}}[F_i] = \occ{F_{i-2}}[F_i] = 3$.
Among the three occurrences of $F_{i-2}^{\lhd} = G_{i-2} \, \flip{\alpha_i}$ in $F_{i}$, 
exactly one is preceded by $\flip{\alpha_i}$ 
(namely the one corresponding to the occurrence of $F_{i-2}$ highlighted in yellow in \cref{fig:fib-mus-no}).
Hence, $\flip{\alpha_i} \, G_{i-2} \, \flip{\alpha_i}$ is unique in $F_i$.
Further, $\flip{\alpha_i} \,G_{i-2} $ is not unique in $F_i$, 
since it is a substring of $G_{i-1}$, which occurs twice in $F_i$ (by \cref{obs:fib-properties}).
Now, since $\flip{\alpha_i} \, G_{i-2} \, \flip{\alpha_i}$ is unique, 
while both $G_{i-2} \, \flip{\alpha_i}$ and $\flip{\alpha_i} \,G_{i-2}$ are repeated,
we have shown that $\flip{\alpha_i} \, G_{i-2} \, \flip{\alpha_i}$ is a MUS.
By an analogous argument, 
$\alpha_i \, G_{i-3} \, \alpha_i \in \mus{F_i}$ can be proved similarly. 

We next show $ \mus{F_i} \subset \{ \alpha_i  \, G_{i-3} \, \alpha_i,  \ \flip{\alpha_i} \, G_{i-2} \, \flip{\alpha_i}  \}$. 
Let $[s, e]$  be an occurrence of a substring $P = F_i[s\ldots e]$ of $F_i$ such that
$[s, e]$ is not a proper sub-occurrence of
$[f_{i-2}, f_{i-1}-1]$ or $[f_{i-1}, f_i-1]$,
which are the occurrences of $\alpha_i  \, G_{i-3} \, \alpha_i $ and $ \flip{\alpha_i} \, G_{i-2} \, \flip{\alpha_i}$, respectively.
We claim that $P$ is repeated and thus cannot be a MUS.
We consider the following cases.
\begin{itemize}
\item 
When $e \leq f_{i-1}-2$, $P$ is contained in the repeated substring $F_i[1 \ldots f_{i-1}-2] = G_{i-1}$,
highlighted in blue in \cref{fig:fib-mus-no}.
\item 
When $f_{i-2}+1 \leq s \leq f_{i-1}-1$ and $f_{i-1} \leq e \leq f_i-2$, 
$P$ is a contained in the repeated substring $F_i[f_{i-2}+1 \ldots f_i-2] = G_{i-1}$,
highlighted in green in \cref{fig:fib-mus-no}.
\item
When $f_{i-1}+1 \leq s$ and $e = f_i$, 
$P$ is contained in the repeated substring $F_i[f_{i-1}+ 1 \ldots f_{i}]  = F_{i-2}$,
highlighted in yellow in \cref{fig:fib-mus-no}.
\end{itemize}
In all cases, $P$ is repeated in $F_i$ and thus cannot be a MUS.
Therefore, we conclude that 
$
\mus{F_i} = \{  \alpha_i  \, G_{i-3} \, \alpha_i,  \, \flip{\alpha_i} \, G_{i-2}  \, \flip{\alpha_i}   \}
$.
\end{proof}

\subparagraph*{MUSs of Thue-Morse words.}
Before proving the MUSs of Thue-Morse words, we first state the following lemma, 
which serves as the Thue-Morse analogue of \cref{lem:occ-f-i-d}.

\begin{lemma}\label{lem:occ-tm-i-3-tm-5}
For each $i \geq j$ and $j - d \geq 2$, we have 
$\occ{\tm_{i-d}}[\tm_{i}] = \occ{\tm_{j-d}}[\tm_{j}]$
and 
$\occ{\flip{\tm_{i-d}}}[\tm_{i}] = \occ{\flip{\tm_{j-d}}}[\tm_{j}]$.
\end{lemma}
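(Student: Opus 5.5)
As in \cref{lem:occ-f-i-d}, it suffices to treat the case $j = i-1$ and chain it: we show $\occ{\tm_{i-d}}[\tm_{i}] = \occ{\tm_{i-1-d}}[\tm_{i-1}]$ whenever $i-1-d \geq 2$, and likewise with $\flip{\tm_{\cdot}}$ in place of $\tm_{\cdot}$. Iterating from $i$ down to $j$ then gives the lemma.

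For the step, observe that $\mu(\tm_{i-1-d}) = \tm_{i-d}$, $\mu(\tm_{i-1}) = \tm_i$, and $\mu(\flip{w}) = \flip{\mu(w)}$, so that $\mu(\flip{\tm_{i-1-d}}) = \flip{\tm_{i-d}}$. Hence by \cref{thm:occ-preserve} with $k=1$, it is enough to know that $\mu$ is \IF on $\{\tm_m\}$ and on $\{\flip{\tm_m}\}$ for every $m \geq 2$ (here $m = i-1-d$). Injectivity of $\mu$ is standard, since $\mu$ is $2$-uniform with distinct images.

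To prove interference-freeness I would use \cref{lem:if-barrier}, as in \cref{cor:tm-if}, but this requires extending that corollary down to small orders.
\begin{itemize}
\item For $m \geq 3$, $\tm_m$ begins with $\a\b\b\a$ and ends with $\a\b\b\a$ or $\b\a\a\b$ (for $m=3$, $\tm_3 = \a\b\b\a$ itself).
\item The words $\flip{\tm_m}$ begin with $\b\a\a\b$ and end with one of these two blocks.
\item For $m = 2$, the words are $\tm_2 = \a\b$ and $\flip{\tm_2} = \b\a$.
\end{itemize}
By \cref{lem:if-barrier}, the claim reduces to checking directly that $\mu$ is \IF on $\{\a\b, \b\a, \a\b\b\a, \b\a\a\b\}$. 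Note that \cref{lem:if-barrier} allows $q=\varepsilon$ and even $b_L = b_R = u$, so short words cause no trouble.

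The main obstacle is this finite verification. For $\mu(\a\b) = \a\b\b\a$, one must rule out any interfered factorization $x\cdot y\cdot z$, where $x$ and $z$ are single letters (or empty) and $y \in \{\a\b,\b\a\}^*$. The only nontrivial candidates are $x = \a$, $y = \b\b$, $z = \a$ and similar, and $\b\b$ is not an image product; parity of lengths forces $|x|+|z|$ to be even, so $x$ and $z$ are both single letters. It is also not an inner image factor, since it has length $4 > 2$. The same check handles $\mu(\b\a) = \b\a\a\b$, and the images of the length-$4$ words, of length $8$, are handled by the same parity argument: the middle six letters, split into pairs, must contain $\a\a$ or $\b\b$. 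In the paper's own terms, these cases were already asserted "by definition" in \cref{cor:tm-if}, so I would cite that reasoning and spell out the $\a\b$ case.
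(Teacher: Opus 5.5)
Your proposal is correct and follows essentially the paper's route: \cref{thm:occ-preserve} is applied to $\mu$, with interference-freeness on $\tm_m$ and $\flip{\tm_m}$ obtained from \cref{lem:if-barrier} in the manner of \cref{cor:tm-if}. Your explicit handling of the orders $m=2,3$ is a worthwhile addition, since the range $j-d\geq 2$ needs them (for example, \cref{thm:mus-tm} uses $\occ{\tm_2}[\tm_5]$) while \cref{cor:tm-if} is stated only for $i\geq 4$; your aside that \cref{lem:if-barrier} permits $b_L=b_R=u$ is not literally right, because $u=b_L\,q\,b_R$ is a concatenation, but it is harmless since those short words are in the set you check directly.
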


\begin{proof}
The first equality follows directly from \cref{cor:tm-if} and \cref{thm:occ-preserve}.
For the second equality,  note that $\mu$ is \IF on $\{ \flip{\tm_i} : i \geq 4 \}$, as can be shown
by an argument analogous to \cref{cor:tm-if};
combining this with \cref{thm:occ-preserve} yields the desired result.
\end{proof}

\begin{figure}[t]
\centering
\includegraphics[width=\linewidth]{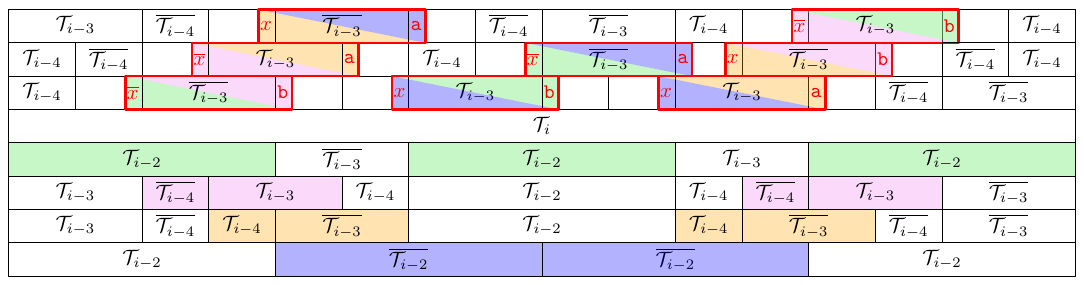}
\caption{
Illustration of \cref{thm:mus-tm},
following the conventions of \cref{fig:fib-mus-no}.
Let $x = \b$ for even $i$
and $x = \a$ for odd $i$.
}
\label{fig:mus-tm}
\end{figure}

\begin{theorem}\label{thm:mus-tm}
For a word $w$, let $\ext{w} = \{ 
\a \, w \, \a,
\a \, w \, \b,
\b \, w \, \a,
\b \, w \, \b
\}$
be the set of extensions of $w$.
Then, 
for each $i \geq 5$,
$
\mus{\tm_i} = \ext{\tm_{i-3}} \cup \ext{\flip{\tm_{i-3}}}
$.
\end{theorem}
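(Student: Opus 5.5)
The plan is to reduce everything to the base case $i=5$ via the position bijection lemma proved above, and then handle completeness separately.

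\textbf{Block structure and base case.} Write $\tm_i = \mu^{i-5}(\tm_5)$. Since $\mu$ is $2$-uniform, $\mu^{i-5}$ is $2^{i-5}$-uniform and $\mu^{i-5}(\a\b)=\tm_{i-3}$. For $i=5$ we have $\tm_5=\texttt{abbabaabbaababba}$ and $\tm_2=\a\b$. I will verify directly that each of the eight words in $\ext{\a\b}\cup\ext{\b\a}$ occurs exactly once in $\tm_5$. I will also check that every factor of $\tm_5$ of length at most $3$ is repeated, which covers the proper factors of these eight words of length $3$. Together these show the statement holds for $i=5$.

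\textbf{Transfer of occurrences.} Combine \cref{cor:tm-if} (and its flipped analogue used in \cref{lem:occ-tm-i-3-tm-5}) with the position bijection lemma, applied to $u=\a\b$ lifted to $\tm_{i-3}$ and $u=\b\a$ lifted to $\flip{\tm_{i-3}}$. Here I expect to have to arrange the interference-freeness hypotheses by working from the level $\tm_4$ onward, where \cref{cor:tm-if} applies, rather than from $\a\b$ itself. The conclusion should be that the occurrences of $\tm_{i-3}$ in $\tm_i$ are exactly the positions $2^{i-5}(p-1)+1$ with $p\in\Occ{\a\b}[\tm_5]$, and similarly for $\flip{\tm_{i-3}}$. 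Then:
\begin{itemize}
\item The letter immediately before such an occurrence is the last letter of $\mu^{i-5}(\tm_5[p-1])$.
\item The letter immediately after it is the first letter of $\mu^{i-5}(\tm_5[p+2])$, which equals $\tm_5[p+2]$.
\end{itemize}
Both maps on letters are bijections of $\{\a,\b\}$: the last-letter map is the identity or the flip depending on the parity of $i-5$. Hence the number of occurrences of $x\,\tm_{i-3}\,y$ in $\tm_i$ equals that of $x'\a\b y$ in $\tm_5$ for corresponding letters $x',y$. The same correspondence holds for $x\,\tm_{i-3}$ and $\tm_{i-3}\,y$. Therefore uniqueness of each extension, and repetition of its two maximal proper factors, transfer from the base case. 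This gives $\ext{\tm_{i-3}}\cup\ext{\flip{\tm_{i-3}}}\subseteq\mus{\tm_i}$.

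\textbf{Completeness (expected main obstacle).} I need to show that there are no other MUSs. I would split into short and long factors.
\begin{itemize}
\item \emph{Short factors.} Show every factor of $\tm_i$ of length at most $2^{i-4}+1$ is repeated. The idea is that every such factor lies inside a window of length about $2^{i-3}$ that is a factor of $\mu^{i-5}(t)$ for some factor $t$ of $\tm_5$ of length at most $3$. Such $t$ is repeated in $\tm_5$, so the window is repeated by \cref{lem:count:leq}.
\item \emph{Long factors.} Let $w$ be a MUS of length greater than $2^{i-4}+2$. It contains a full aligned block $\tm_{i-3}$ or $\flip{\tm_{i-3}}$ together with a letter on each side. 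That is, $w$ contains some $x\,\tm_{i-3}\,y$ or $x\,\flip{\tm_{i-3}}\,y$, and each of these is already unique. So $w$ has a proper unique factor, contradicting minimality.
\end{itemize}
The delicate point is the long case: justifying that a long factor must contain such an extended block occurrence even when it is not aligned to block boundaries. For this I would fall back on the Fibonacci-style covering argument. Explicitly list the eight unique occurrences in $\tm_i=\mu^3(\tm_{i-3}\text{-blocks})$ following the block pattern $\texttt{abbabaab}$. Then exhibit a few repeated intervals, such as the occurrences of $\tm_{i-1}$-type windows visible from $\tm_i=\tm_{i-1}\flip{\tm_{i-1}}$. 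Every interval that avoids strictly containing one of the eight occurrences should be covered by one of these repeated windows.
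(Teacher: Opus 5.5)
Your argument for $\ext{\tm_{i-3}}\cup\ext{\flip{\tm_{i-3}}}\subseteq\mus{\tm_i}$ is sound. It is essentially the paper's idea made explicit. Occurrences of $\tm_{i-3}$ are the aligned images of occurrences of $\a\b$ in $\tm_5$; there are five of these, at $1,4,7,11,13$, and your correspondence correctly handles the prefix one. The neighbouring letters are then read off from $\mu^{i-5}(c)$.

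Your hedge about interference-freeness is unnecessary. $\mu$ is \IF on $\{\a\b\}$ directly: since $x,z\in\{\varepsilon,\a\}$, the middle part $y$ of an interfered factorization of $\a\b\b\a$ would be $\b\b\a$, $\a\b\b$ or $\b\b$, none in $\{\a\b,\b\a\}^*$. The same holds symmetrically on $\{\b\a\}$, and \cref{lem:if-barrier} with $b_L=\a\b$, $b_R=\b\a$ then covers $\tm_3$. So $u=\a\b$, $v=\tm_5$, $k=i-5$ can be used as is.

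The genuine gap is the long case of completeness. Length alone does not force a factor to contain an occurrence of $x\,\tm_{i-3}\,y$ or $x\,\flip{\tm_{i-3}}\,y$. With blocks of size $B=2^{i-5}$, a factor starting at a block boundary needs length $3B+1$ before it contains two full blocks preceded by a letter. For instance, $\tm_7[1\ldots 11]$ has length $11>2^{3}+2$ but contains no such occurrence, because all occurrences of $\tm_4$ in $\tm_7$ are aligned. Moreover, two consecutive full blocks may be $\mu^{i-5}(\a\a)$ or $\mu^{i-5}(\b\b)$. Your argument never uses the uniqueness of $w$ to locate a block.

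The fallback covering is also not carried out, and the window you suggest would not work: $\tm_{i-1}$ occurs only once in $\tm_i$, since it contains several of the eight MUS occurrences. The paper's covering instead uses the repeated windows $\tm_{i-2}$, $\flip{\tm_{i-2}}$, $\flip{\tm_{i-4}}\,\tm_{i-3}$, $\tm_{i-4}\,\flip{\tm_{i-3}}$ and their analogues in the second half.

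A cleaner repair inside your own framework is to generalize your short case. Take an occurrence $[s,e]$ of $w$ with $s$ in block $a$ and $e$ in block $b$, and put $t=\tm_5[a\ldots b]$.
\begin{itemize}
\item If $t$ is repeated in $\tm_5$, then $w$ is repeated, since it sits at the same offset inside each aligned copy of $\mu^{i-5}(t)$.
\item Otherwise $[a,b]$ contains a MUS occurrence $[p-1,p+2]$ of $\tm_5$, as every unique occurrence contains one. Its image $[(p-1)B,\,(p+1)B+1]$ in $\tm_i$ is an occurrence of one of the eight words. It lies inside $[s,e]$ because $s\le aB\le(p-1)B$ and $e\ge(b-1)B+1\ge(p+1)B+1$. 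So a MUS $w$ must equal that word.
\end{itemize}
For this you need $\mus{\tm_5}$ to be exactly the eight words, which your base-case check does not yet establish. You should add that $\a\b\b\a$ and $\b\a\a\b$ are the only repeated length-$4$ factors of $\tm_5$, and that no two consecutive length-$4$ factors are both repeated.
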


\begin{proof}
As in the proof of \cref{thm:mus-fib}, 
we establish the set equality by proving both inclusions.
We first show that $\ext{\tm_{i-3}} \subset \mus{\tm_i}$.
By \cref{lem:occ-tm-i-3-tm-5},
we have 
$\occ{\tm_{i-3}}[\tm_{i}] = \occ{\tm_2}[\tm_{5}] = 4$.
Consider the four occurrences of $\tm_{i-3}$ in $\tm_{i}$. 
Each occurrence is preceded and followed by either $\tm_{i-4}$ or $\flip{\tm_{i-4}}$.
More precisely, across the four occurrences, all four possible combinations occur:
$\tm_{i-4}  \tm_{i-3} \tm_{i-4}$,
$\flip{\tm_{i-4}} \tm_{i-3} \flip{\tm_{i-4}}$,
$\tm_{i-4} \tm_{i-3} \flip{\tm_{i-4}}$, and
$\flip{\tm_{i-4}} \tm_{i-3} \tm_{i-4}$.
Since $\tm_{i-4}$ and $\flip{\tm_{i-4}}$ have different first and last letters,
It follows that each $u \in \ext{\tm_{i-3}}$ is unique.
Further, since 
$\tm_{i-4}  \tm_{i-3}$,
$\flip{\tm_{i-4}}  \tm_{i-3}$,
$\tm_{i-3}  \flip{\tm_{i-4}}$, and
$\tm_{i-3}  \tm_{i-4}$
all occur twice in $\tm_i$,
it follows that 
$\a \, \tm_{i-3}$,
$\b \, \tm_{i-3}$,
$\tm_{i-3} \, \a$, and
$\tm_{i-3} \, \b$
also all occur twice in $\tm_i$.
Therefore, $\ext{\tm_{i-3}} \subset \mus{\tm_i}$.
With a similar argument, it can be shown that 
$\ext{\flip{\tm_{i-3}}} \subset \mus{\tm_i}$.
All these eight MUSs are outlined in red in \cref{fig:mus-tm}.

We next prove $\mus{\tm_i} \subset \ext{\tm_{i-3}} \cup \ext{\flip{\tm_{i-3}}}$.
Let $[s, e]$ be an occurrence of a substring $P = \tm_i[s\ldots e]$ of $\tm_i$
such that $[s, e]$ is not a proper sub-occurrence of any  
occurrences of substrings in $\ext{\tm_{i-3}} \cup \ext{\flip{\tm_{i-3}}}$. 
We claim that $P$ is repeated and thus cannot be a MUS.
We consider the following cases.
\begin{itemize}
\item
When $s \leq \ltm_{i-3}-1$ and $e \leq \ltm_{i-2}$,
$P$ is contained in the repeated substring $\tm_i[1 \ldots \ltm_{i-2}] = \tm_{i-2}$, 
highlighted in green on the left of \cref{fig:mus-tm}.

\item
When $\ltm_{i-3}+1 \leq s \leq \ltm_{i-3}+\ltm_{i-4}-1$
and $\ltm_{i-2}+1 \leq e \leq \ltm_{i-2}+\ltm_{i-4}$,
$P$ is contained in the repeated substring 
$\tm_i[\ltm_{i-3}+1 \ldots \ltm_{i-2}+\ltm_{i-4}] = \flip{\tm_{i-4}}\,\tm_{i-3}$, 
highlighted in pink on the left of \cref{fig:mus-tm}.

\item
When $\ltm_{i-3}+\ltm_{i-4}+1 \leq s \leq \ltm_{i-2}-1$
and $\ltm_{i-2}+\ltm_{i-4}+1 \leq e \leq \ltm_{i-2}+\ltm_{i-3}$,
$P$ is contained in the repeated substring 
$\tm_i[\ltm_{i-3}+\ltm_{i-4}+1 \ldots \ltm_{i-2}+\ltm_{i-3}] = \tm_{i-4}\,\flip{\tm_{i-3}}$, 
highlighted in yellow on the left of \cref{fig:mus-tm}.

\item
When $\ltm_{i-2}+1 \leq s \leq \ltm_{i-2}+\ltm_{i-3}-1$
and $\ltm_{i-2}+\ltm_{i-3}+1 \leq e \leq \ltm_{i-1}$,
$P$ is contained in the repeated substring 
$\tm_i[\ltm_{i-2}+1 \ldots \ltm_{i-1}] = \flip{\tm_{i-2}}$, 
highlighted in blue on the left of \cref{fig:mus-tm}.

\item
When $\ltm_{i-2}+\ltm_{i-3}+1 \leq s \leq \ltm_{i-1}-1$
and $\ltm_{i-1}+1 \leq e \leq \ltm_{i-1}+\ltm_{i-3}$,
$P$ is contained in the repeated substring 
$\tm_i[\ltm_{i-2}+\ltm_{i-3}+1 \ldots \ltm_{i-1}+\ltm_{i-3}] = \tm_{i-2}$, 
highlighted in green in the middle of \cref{fig:mus-tm}.

\end{itemize}
The remaining four cases can be proved analogously, and are illustrated by the blue, yellow, pink, and green substrings on the right of \cref{fig:mus-tm}.
In all cases, $P$ is repeated in $\tm_i$ and thus cannot be a MUS.
Therefore, we conclude that 
$
\mus{\tm_i} = \ext{\tm_{i-3}} \cup \ext{\flip{\tm_{i-3}}}
$.
\end{proof}

Finally, with the two theorems above, we can 
exploit the connection between MUSs and net occurrences (\cref{lem:no-mus}) to simplify existing proofs on net occurrences in these words~\cite{conf/cpm/2025/guo} (\cref{lem:no-fib-tm}).
These results are also illustrated in \cref{fig:fib-mus-no} and \cref{fig:mus-tm}.

\begin{lemma}[\cite{conf/cpm/2025/mieno, journal/fuin/2011/ilie}]\label{lem:no-mus}
For a string $w$, let 
$[i_1, j_1], [i_2, j_2],   \ldots,  [i_m, j_m] $ be the sequence of MUS occurrences in $w$, 
ordered by increasing starting position.
Then, $\no{w} = \{ [1, j_1-1], [i_1+1, j_2-1], \ldots, [i_{m-1}+1, j_m-1], [i_m+1, |w|] \} $.
\end{lemma}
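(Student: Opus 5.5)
The plan is to prove the two inclusions separately. The main tool is the characterisation of net occurrences as maximal repeated occurrences whose both one-letter extensions are unique. The link between these and consecutive MUS occurrences is the standard fact that an occurrence $[s,e]$ is unique if and only if it contains some MUS occurrence. To see this, shrink a unique occurrence from both ends while it stays unique; you end at a MUS occurrence inside it. Conversely, any superstring occurrence of a unique occurrence is unique. Two further facts are needed. First, no MUS occurrence contains another, since proper substrings of a MUS are repeated. Hence ordering the MUS occurrences by starting position also orders them strictly by ending position: $i_1<\dots<i_m$ and $j_1<\dots<j_m$. Second, $[s,e]$ is repeated if and only if it contains no $[i_t,j_t]$, that is, if and only if there is no $t$ with $s\le i_t$ and $j_t\le e$.

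For the inclusion ``$\supseteq$'', take the candidate $[i_{t-1}+1, j_t-1]$, using the conventions $i_0=0$ and $j_{m+1}=|w|+1$. By the monotonicity of the $i$'s and $j$'s, the only MUS occurrences that could lie inside this interval would need $i_u\ge i_{t-1}+1$, which forces $u\ge t$. They would also need $j_u\le j_t-1$, which forces $u<t$. These are incompatible, so the interval is repeated. Its left extension $[i_{t-1}, j_t-1]$ contains $[i_{t-1},j_{t-1}]$, because $j_{t-1}<j_t$. Hence that extension is unique, or it falls outside the word when $t=1$, which the convention treats as unique. Symmetrically, the right extension contains $[i_t,j_t]$, because $i_t>i_{t-1}$. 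So the candidate is a net occurrence. Nonemptiness is not required by the definition, so no further check is needed here. If one wants $\varepsilon$-occurrences handled, the paper's convention on $\varepsilon$ covers them.

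For the inclusion ``$\subseteq$'', let $[s,e]$ be a net occurrence. Its left extension $[s-1,e]$ is unique, or else $s=1$. In the unique case it contains some MUS occurrence, and that occurrence must start exactly at $s-1$, since $[s,e]$ itself is repeated. Call this occurrence $[i_a,j_a]$, so $i_a=s-1$ and $j_a\le e$. Symmetrically, either $e=|w|$, or there is a MUS occurrence $[i_b,j_b]$ with $j_b=e+1$ and $i_b\ge s$. It remains to show $b=a+1$, with the boundary cases corresponding to $a=0$ and $b=m+1$. Any index strictly between $a$ and $b$ would give a MUS occurrence with $i\ge i_a+1=s$ and $j\le j_b-1=e$, contradicting that $[s,e]$ is repeated. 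Also $a<b$, since $i_b\ge s>i_a$. Thus $[s,e]=[i_{a}+1,j_{a+1}-1]$ is one of the listed intervals.

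The main obstacle is the bookkeeping at the boundaries, namely $s=1$, $e=|w|$, and the case $m$ small. The cleanest approach is to adjoin the sentinels $i_0=0$ and $j_{m+1}=|w|+1$, so that all cases reduce to the uniform argument above.
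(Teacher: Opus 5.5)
The paper gives no proof of this lemma. It imports it from Mieno--Inenaga and Ilie--Smyth and uses it only to read off the net occurrences of the Fibonacci and Thue--Morse words from their MUSs. Your argument is a correct, self-contained proof along the standard lines, and both inclusions go through as you wrote them. It rests on three facts:
\begin{itemize}
\item an occurrence is unique if and only if it contains a MUS occurrence;
\item MUS occurrences are pairwise non-nested, so ordering them by start also orders them strictly by end;
\item the sentinels $i_0=0$ and $j_{m+1}=|w|+1$ preserve this strict monotonicity.
\end{itemize}

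What your proof adds over the citation is a check that the statement holds under exactly the conventions fixed in this paper's preliminaries. Out-of-range extensions count as unique, and $\varepsilon$ occurs $|w|+1$ times. Consequently the candidate interval $[i_{t-1}+1, j_t-1]$ can be empty. This happens exactly when $j_t=i_{t-1}+1$, which forces two single-letter MUSs at adjacent positions, as in $w=\mathtt{ab}$. Such an empty interval is a genuine net occurrence, and your argument handles it without change.

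Three small points are used implicitly and are worth stating.
\begin{itemize}
\item You need $|w|\ge 1$. This makes $\varepsilon$ repeated, and it gives $m\ge 1$, because $w$ itself is unique and shrinks to a MUS. Without $m\ge 1$, $j_1$ is undefined.
\item You need $i_{t-1}<j_t$, so that each candidate has non-negative length. This follows from $i_{t-1}<i_t\le j_t$ together with the sentinels.
\item In the $\subseteq$ part, when $b=m+1$ there is no $i_b$. There $a<b$ holds simply because $a\le m$.
\end{itemize}
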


\begin{corollary}[\cite{conf/cpm/2025/guo}]\label{lem:no-fib-tm}
For each $i \geq 7$, 
$\no{F_i}$ consists of the two occurrences of $G_{i-1}$ and the last occurrence of $F_{i-2}$.
For each $i \geq 5$, $\no{\tm_i}$ consists of all occurrences of each of the following substrings:
$\tm_{i-2}$,
$\flip{\tm_{i-2}}$,
$\tm_{i-4} \ \flip{\tm_{i-3}}$, and
$\flip{\tm_{i-4}} \ \tm_{i-3}$.
\end{corollary}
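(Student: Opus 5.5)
The plan is to apply \cref{lem:no-mus} directly to the MUS sets computed in \cref{thm:mus-fib} and \cref{thm:mus-tm}. For each word, I will list the MUS occurrences in increasing order of starting position, read off the consecutive net occurrences $[i_{t}+1, j_{t+1}-1]$, and identify the substring at each such interval with the claimed one.

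\textbf{Fibonacci words.} Take $i \geq 7$, so that \cref{obs:fib-properties} applies to $G_{i-1}$. By the proof of \cref{thm:mus-fib}, there are exactly two MUS occurrences. The first is $\alpha_i G_{i-3}\alpha_i$ at $[f_{i-2}, f_{i-1}-1]$. The second is $\flip{\alpha_i}G_{i-2}\flip{\alpha_i}$ at $[f_{i-1}, f_i-1]$. Their starting positions are ordered as listed. \cref{lem:no-mus} then gives three net occurrences:
\begin{itemize}
\item $[1, f_{i-1}-2]$, which is $F_i[1\ldots f_{i-1}-2]=G_{i-1}$, the first occurrence of $G_{i-1}$;
\item $[f_{i-2}+1, f_i-2]$, which has length $f_{i-1}-2$ and by \cref{obs:fib-properties} is the second occurrence of $G_{i-1}$, starting at $f_{i-2}+1$;
\item $[f_{i-1}+1, f_i]$, which is the suffix of length $f_{i-2}$, i.e.\ the last occurrence of $F_{i-2}$ (recall $F_i=F_{i-1}F_{i-2}$).
\end{itemize}
This matches the blue, green and yellow substrings in the proof of \cref{thm:mus-fib}.

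\textbf{Thue-Morse words.} Take $i \geq 5$. There are eight MUS occurrences, two for each of the four $\tm_{i-3}$- and $\flip{\tm_{i-3}}$-blocks flanked by neighbours. Concretely, I will write $\tm_i$ as a concatenation of eight blocks of length $\ltm_{i-3}$, namely $\tm_{i-3}$ and $\flip{\tm_{i-3}}$ in Thue-Morse order $\tm_4$ (or $\flip{}$ of it), refined into half-blocks of length $\ltm_{i-4}$. Each MUS $x\,\tm_{i-3}\,y$ occupies one full block plus the last letter of the preceding block and the first letter of the following block. So the MUSs start at positions $t\,\ltm_{i-3}$ for $t=1,\dots,6$, with the right ends sorted accordingly.

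The only awkward point is that the first and last blocks are not centres of MUSs. Indeed, eight MUSs on six interior blocks seems inconsistent, so I will recount from \cref{fig:mus-tm}. The four occurrences of $\tm_{i-3}$ and four of $\flip{\tm_{i-3}}$ in $\tm_i$ include non-aligned occurrences at half-block offsets, since $\tm_{i-3}=\tm_{i-4}\flip{\tm_{i-4}}$. Hence the MUS starting positions lie on the grid of multiples of $\ltm_{i-4}$.

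The main step is therefore to tabulate all eight MUS intervals explicitly, using the occurrence lists supplied in the proof of \cref{thm:mus-tm}. This is the same data that defines the cases there. Then, for each consecutive pair, I compute $[i_t+1, j_{t+1}-1]$ and check it against the repeated substrings already highlighted in that proof. These are $\tm_{i-2}$ (green), $\flip{\tm_{i-2}}$ (blue), $\tm_{i-4}\flip{\tm_{i-3}}$ (yellow) and $\flip{\tm_{i-4}}\tm_{i-3}$ (pink). The $i$-parity (the letter $x$ in \cref{fig:mus-tm}) only swaps symmetric cases.

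Finally, I will show the listed intervals exhaust all occurrences of these four substrings. Their occurrence counts can be obtained from \cref{thm:occ-preserve}, via \cref{cor:tm-if} and its flipped analogue, by reduction to $\tm_5$, where they are checked by hand. I expect this bookkeeping of the eight MUS intervals and their ordering to be the main obstacle. Everything else is direct substitution into \cref{lem:no-mus}.
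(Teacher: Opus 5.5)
Your route is the paper's own: you feed the MUS sets of \cref{thm:mus-fib} and \cref{thm:mus-tm} into \cref{lem:no-mus}. Your Fibonacci half is complete and correct. The Thue--Morse half, however, stops at the step that carries the content: you never establish where the eight MUSs of $\tm_i$ lie.

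Your first layout gives only the six block-aligned ones, starting at $t\,\ltm_{i-3}$ for $1\le t\le 6$. The repair (``multiples of $\ltm_{i-4}$'') is asserted without saying which two further positions occur, or why. Nor can you take them from the proof of \cref{thm:mus-tm}. That proof does not list them; its case analysis presupposes them. Its count $\occ{\tm_2}[\tm_5]=4$ is also off by one. In $\tm_5=\a\b\b\a\b\a\a\b\b\a\a\b\a\b\b\a$, the factor $\a\b$ occurs at $1,4,7,11,13$ and $\b\a$ at $3,5,9,12,15$. The two occurrences touching an end of the word yield no MUS.

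The clean fix is to compute in $\tm_5$ and lift by $\mu^{i-5}$, which turns each letter into a block of length $L=\ltm_{i-4}$. Each MUS is unique, so the forward direction of the position bijection suffices here, and that direction needs no interference-freeness. The interior occurrences $p\in\{3,4,5,7,9,11,12,13\}$ give MUS occurrences $[L(p-1),\,L(p+1)+1]$ in $\tm_i$. The four distinct flanking pairs survive because $\mu^{i-5}(\b)=\flip{\mu^{i-5}(\a)}$. So the MUSs start at $2L,3L,4L,6L,8L,10L,11L,12L$; the two non-aligned ones sit at $\tfrac{3}{2}\ltm_{i-3}$ and $\tfrac{11}{2}\ltm_{i-3}$. \cref{lem:no-mus} then yields $[1,4L]$, $[2L+1,5L]$, $[3L+1,6L]$, $[4L+1,8L]$, $[6L+1,10L]$, $[8L+1,12L]$, $[10L+1,13L]$, $[11L+1,14L]$ and $[12L+1,16L]$. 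These are the images of the $\tm_5$-factors $\a\b\b\a$, $\b\a\b$, $\a\b\a$, $\b\a\a\b$, $\a\b\b\a$, $\b\a\a\b$, $\a\b\a$, $\b\a\b$, $\a\b\b\a$. That is, they are three occurrences of $\tm_{i-2}$, two of $\flip{\tm_{i-2}}$, and two each of $\flip{\tm_{i-4}}\tm_{i-3}$ and $\tm_{i-4}\flip{\tm_{i-3}}$.

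Your exhaustiveness step is genuinely needed, since the statement says \emph{all} occurrences. However, \cref{cor:tm-if} and its flipped analogue do not justify it. The mixed words are $\mu^{i-5}(\a\b\a)$ and $\mu^{i-5}(\b\a\b)$, so \cref{thm:occ-preserve} needs interference-freeness on $\mu^j(\a\b\a)$ and $\mu^j(\b\a\b)$ for $0\le j\le i-6$. Even lifting the count of $\tm_3$ needs $\tm_3$ itself, which \cref{cor:tm-if} omits because it covers only indices $\ge 4$.

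A one-line argument covers every case. Suppose $\mu(u)=x\,y\,z$ is an interfered factorization. Parity forces $|x|=|z|=1$. Then $y\in\{\a\b,\b\a\}^*$ says $\flip{u[k]}\neq u[k+1]$, i.e.\ $u[k]=u[k+1]$, for every $k$. The inner-factor condition is vacuous once $|\mu(u)|\ge 4$. Hence $\mu$ is interference-free on every word containing both letters. With this, the counts $3,2,2,2$ of $\a\b\b\a$, $\b\a\a\b$, $\a\b\a$, $\b\a\b$ in $\tm_5$ lift to $\tm_i$. Their total of $9$ matches the nine net occurrences, which completes the argument.
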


\section{Conclusion and Future Work}

In this work, we investigated morphisms that preserve factor occurrences.  
We introduced the notion of \emph{interference-free morphisms} and studied their key properties.
Building on this notion, we established sufficient conditions for \emph{occurrence-preserving morphisms} and
applied these conditions to identify the MUSs of the Fibonacci and Thue-Morse words.
We now outline several directions for future work.

First, one could consider improving the conditions in \cref{thm:occ-preserve} 
so that they are both sufficient and necessary.
A key observation is that interference-freeness is defined with respect to a morphism $\phi$ and a single word $u$,
whereas occurrence-preservation concerns $\phi$ and two words $u$ and $v$.
Thus, extending the definition of interference-freeness to involve the two words may lead to a tighter characterization.
(A natural starting point would be to restrict attention to cases where $\occ{u}[v] > 0$,
since when $\occ{u}[v] = 0$, occurrences can be trivially preserved even if interference-freeness does not hold.)
Furthermore, 
it would be interesting to examine the class of morphisms
that lie between recognizable and \SIF morphisms in the inclusion hierarchy (\cref{remark:hierarchy}).
Another natural direction is to investigate the class of infinite words generated as fixed points of \SIF morphisms.

A second line of future work is algorithmic.
For example, 
in Section 3.2 of \cite{conf/latin/2002/braga}, a particular class of morphisms %
is used  in an algorithm for the inverse problem of overlap-graph construction.
Since their morphisms are defined for a specific algorithmic setting, 
the assumptions on the morphisms are naturally more restrictive: 
for example, they require the morphisms to be uniform.
Further, their definition explicitly excludes  Case~(\ref{bad-case}) in our \cref{lem:good-bad-cases}; 
in contrast, we do not impose such a restriction directly, but instead capture 
the essential behavior 
through the conditions in our \cref{thm:occ-preserve}. 
It would be worthwhile to explore other algorithmic settings in which interference-free morphisms may prove useful.

\clearpage

\bibliography{references}

@inproceedings{conf/mfcs/2025/FRSU,
  author       = {Gabriele Fici and
                  Giuseppe Romana and
                  Marinella Sciortino and
                  Cristian Urbina},
  editor       = {Pawel Gawrychowski and
                  Filip Mazowiecki and
                  Michal Skrzypczak},
  title        = {Morphisms and {BWT}-Run Sensitivity},
  booktitle    = {50th International Symposium on Mathematical Foundations of Computer
                  Science, {MFCS} 2025, August 25-29, 2025, Warsaw, Poland},
  series       = {LIPIcs},
  volume       = {345},
  pages        = {49:1--49:18},
  publisher    = {Schloss Dagstuhl - Leibniz-Zentrum f{\"{u}}r Informatik},
  year         = {2025},
  doi          = {10.4230/LIPIcs.MFCS.2025.49}
}

@article{journal/tit/2021/navarro,
  author       = {Gonzalo Navarro and
                  Carlos Ochoa and
                  Nicola Prezza},
  title        = {On the Approximation Ratio of Ordered Parsings},
  journal      = {{IEEE} Trans. Inf. Theory},
  volume       = {67},
  number       = {2},
  pages        = {1008--1026},
  year         = {2021},
  doi          = {10.1109/TIT.2020.3042746}
}

@article{journal/etds/2023/beal,
    author={Marie{-}Pierre B{\'{e}}al and
                  Dominique Perrin and
                  Antonio Restivo},
    title={Recognizability of morphisms}, 
    volume={43}, 
    DOI={10.1017/etds.2022.109}, 
    number={11}, 
    journal={Ergod. Th. \& Dynam. Sys.}, 
    year={2023},    
    pages={3578–3602}
}

@inproceedings{conf/latin/2002/braga,
  author       = {Mar{\'{\i}}lia D. V. Braga and
                  Joao Meidanis},
  editor       = {Sergio Rajsbaum},
  title        = {An Algorithm That Builds a Set of Strings Given Its Overlap Graph},
  booktitle    = {{LATIN} 2002: Theoretical Informatics, 5th Latin American Symposium,
                  Cancun, Mexico, April 3-6, 2002, Proceedings},
  series       = {Lecture Notes in Computer Science},
  volume       = {2286},
  pages        = {52--63},
  publisher    = {Springer},
  year         = {2002},
  doi          = {10.1007/3-540-45995-2\_10}
}

@inproceedings{conf/cpm/2016/matsuoka,
  author       = {Yoshiaki Matsuoka and
                  Shunsuke Inenaga and
                  Hideo Bannai and
                  Masayuki Takeda and
                  Florin Manea},
  editor       = {Roberto Grossi and
                  Moshe Lewenstein},
  title        = {Factorizing a String into Squares in Linear Time},
  booktitle    = {27th Annual Symposium on Combinatorial Pattern Matching, {CPM} 2016,
                  June 27-29, 2016, Tel Aviv, Israel},
  series       = {LIPIcs},
  volume       = {54},
  pages        = {27:1--27:12},
  publisher    = {Schloss Dagstuhl - Leibniz-Zentrum f{\"{u}}r Informatik},
  year         = {2016},
  doi          = {10.4230/LIPICS.CPM.2016.27}
}

@inproceedings{conf/spire/2015/dumitran,
  author       = {Marius Dumitran and
                  Florin Manea and
                  Dirk Nowotka},
  editor       = {Costas S. Iliopoulos and
                  Simon J. Puglisi and
                  Emine Yilmaz},
  title        = {On Prefix/Suffix-Square Free Words},
  booktitle    = {String Processing and Information Retrieval - 22nd International Symposium,
                  {SPIRE} 2015, London, UK, September 1-4, 2015, Proceedings},
  series       = {Lecture Notes in Computer Science},
  volume       = {9309},
  pages        = {54--66},
  publisher    = {Springer},
  year         = {2015},
  doi          = {10.1007/978-3-319-23826-5\_6}
}

@inproceedings{conf/focs/1982/fredman,
  author       = {Michael L. Fredman and
                  J{\'{a}}nos Koml{\'{o}}s and
                  Endre Szemer{\'{e}}di},
  title        = {Storing a Sparse Table with {O(1)} Worst Case Access Time},
  booktitle    = {23rd Annual Symposium on Foundations of Computer Science, Chicago,
                  Illinois, USA, 3-5 November 1982},
  pages        = {165--169},
  publisher    = {{IEEE} Computer Society},
  year         = {1982},
  doi          = {10.1109/SFCS.1982.39}
}

@inproceedings{conf/dlt/2022/frosini,
  author       = {Andrea Frosini and
                  Ilaria Mancini and
                  Simone Rinaldi and
                  Giuseppe Romana and
                  Marinella Sciortino},
  editor       = {Volker Diekert and
                  Mikhail V. Volkov},
  title        = {Logarithmic Equal-Letter Runs for {BWT} of Purely Morphic Words},
  booktitle    = {Developments in Language Theory - 26th International Conference, {DLT}
                  2022, Tampa, FL, USA, May 9-13, 2022, Proceedings},
  series       = {Lecture Notes in Computer Science},
  volume       = {13257},
  pages        = {139--151},
  publisher    = {Springer},
  year         = {2022},
  doi          = {10.1007/978-3-031-05578-2\_11}
}

@inproceedings{conf/cpm/2023/fici,
  author       = {Gabriele Fici and
                  Giuseppe Romana and
                  Marinella Sciortino and
                  Cristian Urbina},
  editor       = {Laurent Bulteau and
                  Zsuzsanna Lipt{\'{a}}k},
  title        = {On the Impact of Morphisms on {BWT}-Runs},
  booktitle    = {34th Annual Symposium on Combinatorial Pattern Matching, {CPM} 2023,
                  June 26-28, 2023, Marne-la-Vall{\'{e}}e, France},
  series       = {LIPIcs},
  volume       = {259},
  pages        = {10:1--10:18},
  publisher    = {Schloss Dagstuhl - Leibniz-Zentrum f{\"{u}}r Informatik},
  year         = {2023},
  doi          = {10.4230/LIPICS.CPM.2023.10}
}

@inproceedings{conf/iwoca/2019/brlek,
  author       = {Srecko Brlek and
                  Andrea Frosini and
                  Ilaria Mancini and
                  Elisa Pergola and
                  Simone Rinaldi},
  editor       = {Charles J. Colbourn and
                  Roberto Grossi and
                  Nadia Pisanti},
  title        = {{B}urrows-{W}heeler {T}ransform of Words Defined by Morphisms},
  booktitle    = {Combinatorial Algorithms - 30th International Workshop, {IWOCA} 2019,
                  Pisa, Italy, July 23-25, 2019, Proceedings},
  series       = {Lecture Notes in Computer Science},
  volume       = {11638},
  pages        = {393--404},
  publisher    = {Springer},
  year         = {2019},
  doi          = {10.1007/978-3-030-25005-8\_32}
}

@article{journal/dam/1993/berstel,
  author       = {Jean Berstel and
                  Patrice S{\'{e}}{\'{e}}bold},
  title        = {A Characterization of Overlap-Free Morphisms},
  journal      = {Discret. Appl. Math.},
  volume       = {46},
  number       = {3},
  pages        = {275--281},
  year         = {1993},
  doi          = {10.1016/0166-218X(93)90107-Y}
}

@article{hsiao2003square,
  title={Square-free-preserving and primitive-preserving homomorphisms},
  author={Hsiao, H.K. and Yeh, Y.T. and Yu, S.S.},
  journal={Acta Mathematica Hungarica},
  volume={101},
  number={1},
  pages={113--130},
  year={2003},
  publisher={Springer},
  doi = {10.1023/B:AMHU.0000003896.79824.c8}
}

@article{journal/fuin/2022/dolce,
  author       = {Francesco Dolce and
                  Edita Pelantov{\'{a}}},
  title        = {On Morphisms Preserving Palindromic Richness},
  journal      = {Fundam. Informaticae},
  volume       = {185},
  number       = {1},
  pages        = {1--25},
  year         = {2022},
  doi          = {10.3233/FI-222102}
}

@article{journal/dam/1999/richomme,
  author       = {Gw{\'{e}}na{\"{e}}l Richomme and
                  Patrice S{\'{e}}{\'{e}}bold},
  title        = {Characterization of Test-sets for Overlap-free Morphisms},
  journal      = {Discret. Appl. Math.},
  volume       = {98},
  number       = {1-2},
  pages        = {151--157},
  year         = {1999},
  doi          = {10.1016/S0166-218X(99)00118-3}
}

@article{journals/dam/2004/richomme,
  author       = {Gw{\'{e}}na{\"{e}}l Richomme and
                  Francis Wlazinski},
  title        = {Overlap-free morphisms and finite test-sets},
  journal      = {Discret. Appl. Math.},
  volume       = {143},
  number       = {1-3},
  pages        = {92--109},
  year         = {2004},
  doi          = {10.1016/J.DAM.2003.10.005}
}

@article{journal/dmtcs/2007/richomme,
  author       = {Gw{\'{e}}na{\"{e}}l Richomme},
  title        = {On Morphisms Preserving Infinite {L}yndon Words},
  journal      = {Discret. Math. Theor. Comput. Sci.},
  volume       = {9},
  number       = {2},
  year         = {2007},
  doi          = {10.46298/DMTCS.411}
}

@article{journal/afp/2023/holub,
  author       = {Stepan Holub and
                  Martin Raska},
  title        = {Binary codes that do not preserve primitivity},
  journal      = {Arch. Formal Proofs},
  volume       = {2023},
  year         = {2023},
  doi          = {10.1007/s10817-023-09674-2}
}

@article{journal/ijac/1993/carpi,
  author       = {Arturo Carpi},
  title        = {On {A}belian Power-Free Morphisms},
  journal      = {Int. J. Algebra Comput.},
  volume       = {3},
  number       = {2},
  pages        = {151--168},
  year         = {1993},
  url          = {https://doi.org/10.1142/S0218196793000123},
  doi          = {10.1142/S0218196793000123},
  bibsource    = {dblp computer science bibliography, https://dblp.org}
}

@article{journal/dm/2008/huang,
  author       = {C. C. Huang and
                  S. S. Yu},
  title        = {Prefix-primitivity-preserving homomorphisms},
  journal      = {Discret. Math.},
  volume       = {308},
  number       = {7},
  pages        = {1025--1032},
  year         = {2008},
  doi          = {10.1016/J.DISC.2007.03.057}
}

@article{journal/tcs/2009/keranen,
  author       = {Veikko Ker{\"{a}}nen},
  title        = {A powerful abelian square-free substitution over 4 letters},
  journal      = {Theor. Comput. Sci.},
  volume       = {410},
  number       = {38-40},
  pages        = {3893--3900},
  year         = {2009},
  doi          = {10.1016/J.TCS.2009.05.027}
}

@inproceedings{conf/cpm/2025/mieno,
  author       = {Takuya Mieno and
                  Shunsuke Inenaga},
  editor       = {Paola Bonizzoni and
                  Veli M{\"{a}}kinen},
  title        = {Space-Efficient Online Computation of String Net Occurrences},
  booktitle    = {36th Annual Symposium on Combinatorial Pattern Matching, {CPM} 2025,
                  June 17-19, 2025, Milan, Italy},
  series       = {LIPIcs},
  volume       = {331},
  pages        = {23:1--23:13},
  publisher    = {Schloss Dagstuhl - Leibniz-Zentrum f{\"{u}}r Informatik},
  year         = {2025},
  doi          = {10.4230/LIPICS.CPM.2025.23}
}

@article{journal/algorithmica/2022/mieno,
  author       = {Takuya Mieno and
                  Yuta Fujishige and
                  Yuto Nakashima and
                  Shunsuke Inenaga and
                  Hideo Bannai and
                  Masayuki Takeda},
  title        = {Computing Minimal Unique Substrings for a Sliding Window},
  journal      = {Algorithmica},
  volume       = {84},
  number       = {3},
  pages        = {670--693},
  year         = {2022},
  doi          = {10.1007/S00453-021-00864-1}
}

@article{journal/fuin/2011/ilie,
  author       = {Lucian Ilie and
                  William F. Smyth},
  title        = {Minimum Unique Substrings and Maximum Repeats},
  journal      = {Fundam. Informaticae},
  volume       = {110},
  number       = {1-4},
  pages        = {183--195},
  year         = {2011},
  doi          = {10.3233/FI-2011-536}
}

@inproceedings{conf/cpm/2017/mieno,
  author       = {Takuya Mieno and
                  Shunsuke Inenaga and
                  Hideo Bannai and
                  Masayuki Takeda},
  editor       = {Juha K{\"{a}}rkk{\"{a}}inen and
                  Jakub Radoszewski and
                  Wojciech Rytter},
  title        = {Tight Bounds on the Maximum Number of Shortest Unique Substrings},
  booktitle    = {28th Annual Symposium on Combinatorial Pattern Matching, {CPM} 2017,
                  July 4-6, 2017, Warsaw, Poland},
  series       = {LIPIcs},
  volume       = {78},
  pages        = {24:1--24:11},
  publisher    = {Schloss Dagstuhl - Leibniz-Zentrum f{\"{u}}r Informatik},
  year         = {2017},
  doi          = {10.4230/LIPICS.CPM.2017.24}
}

@inproceedings{conf/mfcs/2016/mieno,
  author       = {Takuya Mieno and
                  Shunsuke Inenaga and
                  Hideo Bannai and
                  Masayuki Takeda},
  editor       = {Piotr Faliszewski and
                  Anca Muscholl and
                  Rolf Niedermeier},
  title        = {Shortest Unique Substring Queries on Run-Length Encoded Strings},
  booktitle    = {41st International Symposium on Mathematical Foundations of Computer
                  Science, {MFCS} 2016, August 22-26, 2016 - Krak{\'{o}}w, Poland},
  series       = {LIPIcs},
  volume       = {58},
  pages        = {69:1--69:11},
  publisher    = {Schloss Dagstuhl - Leibniz-Zentrum f{\"{u}}r Informatik},
  year         = {2016},
  doi          = {10.4230/LIPICS.MFCS.2016.69}
}

@inproceedings{conf/spire/2024/ohlebusch,
  author       = {Enno Ohlebusch and
                  Thomas B{\"{u}}chler and
                  Jannik Olbrich},
  editor       = {Zsuzsanna Lipt{\'{a}}k and
                  Edleno Silva de Moura and
                  Karina Figueroa and
                  Ricardo Baeza{-}Yates},
  title        = {Faster Computation of {C}hinese Frequent Strings and Their Net Frequencies},
  booktitle    = {String Processing and Information Retrieval - 31st International Symposium,
                  {SPIRE} 2024, Puerto Vallarta, Mexico, September 23-25, 2024, Proceedings},
  series       = {Lecture Notes in Computer Science},
  volume       = {14899},
  pages        = {249--256},
  publisher    = {Springer},
  year         = {2024},
  doi          = {10.1007/978-3-031-72200-4\_19}
}

@article{DBLP:journals/corr/abs-2410-06837,
  author       = {Shunsuke Inenaga},
  title        = {Faster and Simpler Online Computation of String Net Frequency},
  journal      = {CoRR},
  year         = {2024},
  doi          = {10.48550/arXiv.2410.06837}
}

@inproceedings{conf/spire/2024/guo,
  author       = {Peaker Guo and
                  Seeun William Umboh and
                  Anthony Wirth and
                  Justin Zobel},
  editor       = {Zsuzsanna Lipt{\'{a}}k and
                  Edleno Silva de Moura and
                  Karina Figueroa and
                  Ricardo Baeza{-}Yates},
  title        = {Online Computation of String Net Frequency},
  booktitle    = {String Processing and Information Retrieval - 31st International Symposium,
                  {SPIRE} 2024, Puerto Vallarta, Mexico, September 23-25, 2024, Proceedings},
  series       = {Lecture Notes in Computer Science},
  volume       = {14899},
  pages        = {159--173},
  publisher    = {Springer},
  year         = {2024},
  doi          = {10.1007/978-3-031-72200-4\_12}
}

@inproceedings{conf/cpm/2024/guo,
  author       = {Peaker Guo and
                  Patrick Eades and
                  Anthony Wirth and
                  Justin Zobel},
  editor       = {Shunsuke Inenaga and
                  Simon J. Puglisi},
  title        = {Exploiting New Properties of String Net Frequency for Efficient Computation},
  booktitle    = {35th Annual Symposium on Combinatorial Pattern Matching, {CPM} 2024,
                  June 25-27, 2024, Fukuoka, Japan},
  series       = {LIPIcs},
  volume       = {296},
  pages        = {16:1--16:16},
  publisher    = {Schloss Dagstuhl - Leibniz-Zentrum f{\"{u}}r Informatik},
  year         = {2024},
  doi          = {10.4230/LIPICS.CPM.2024.16}
}

@inproceedings{conf/cpm/2025/guo,
  author       = {Peaker Guo and
                  Kaisei Kishi},
  editor       = {Paola Bonizzoni and
                  Veli M{\"{a}}kinen},
  title        = {Net Occurrences in {F}ibonacci and {T}hue-{M}orse Words},
  booktitle    = {36th Annual Symposium on Combinatorial Pattern Matching, {CPM} 2025,
                  June 17-19, 2025, Milan, Italy},
  series       = {LIPIcs},
  volume       = {331},
  pages        = {16:1--16:22},
  publisher    = {Schloss Dagstuhl - Leibniz-Zentrum f{\"{u}}r Informatik},
  year         = {2025},
  doi          = {10.4230/LIPICS.CPM.2025.16}
}

@article{journal/cacm/1975/aho,
  author       = {Alfred V. Aho and
                  Margaret J. Corasick},
  title        = {Efficient String Matching: An Aid to Bibliographic Search},
  journal      = {Commun. {ACM}},
  volume       = {18},
  number       = {6},
  pages        = {333--340},
  year         = {1975},
  doi          = {10.1145/360825.360855}
}

@book{book/daglib/0025093,
  author       = {Jean Berstel and
                  Dominique Perrin and
                  Christophe Reutenauer},
  title        = {Codes and Automata},
  series       = {Encyclopedia of mathematics and its applications},
  volume       = {129},
  publisher    = {Cambridge University Press},
  year         = {2010},
  isbn         = {978-0-521-88831-8}
}

@incollection{book/Rozenberg1976,
author="Rozenberg, G.
and Salomaa, A.",
editor="Tou, Julius T.",
title="The Mathematical Theory of {L} Systems",
bookTitle="Advances in Information Systems Science: Volume 6",
year="1976",
publisher="Springer US",
address="Boston, MA",
pages="161--206",
isbn="978-1-4615-8249-6",
doi="10.1007/978-1-4615-8249-6_4"
}

@article{journal/tcs/2025/navarrourbina,
title = {Repetitiveness measures based on string morphisms},
journal = {Theoretical Computer Science},
volume = {1043},
pages = {115259},
year = {2025},
issn = {0304-3975},
doi = {https://doi.org/10.1016/j.tcs.2025.115259},
author = {Gonzalo Navarro and Cristian Urbina}
}

@inproceedings{conf/cpm/2026/KI,
  author       = {Kotaro Kimura and
                  Tomohiro I},
  editor       = {Philip Bille and
                  Nicola Prezza},  
  title        = {R-enum Revisited: Speedup and Extension for Context-Sensitive Repeats
                  and Net Frequencies},
  booktitle    = {37th Annual Symposium on Combinatorial Pattern Matching, {CPM} 2026,
                  June 15-17, 2026, Copenhagen, Denmark},
  series       = {LIPIcs},
  volume       = {},
  pages        = {},
  publisher    = {Schloss Dagstuhl - Leibniz-Zentrum f{\"{u}}r Informatik},
  year         = {2026},
  doi          = {10.48550/arXiv.2511.11057}
}

@inproceedings{conf/mfcs/1993/berstel,
  author       = {Jean Berstel and
                  Patrice S{\'{e}}{\'{e}}bold},
  editor       = {Andrzej M. Borzyszkowski and
                  Stefan Sokolowski},
  title        = {A Characterization of {S}turmian Morphisms},
  booktitle    = {Mathematical Foundations of Computer Science 1993, 18th International
                  Symposium, MFCS'93, Gdansk, Poland, August 30 - September 3, 1993,
                  Proceedings},
  series       = {Lecture Notes in Computer Science},
  volume       = {711},
  pages        = {281--290},
  publisher    = {Springer},
  year         = {1993},
  doi          = {10.1007/3-540-57182-5\_20}
}

@misc{oeis,
  author = {{OEIS Foundation Inc.}},
  title  = {The {O}n-{L}ine {E}ncyclopedia of {I}nteger {S}equences},
  url    = {https://oeis.org},
  note   = {Founded by Neil J. A. Sloane}
}

\appendix

\section{An Algorithm for Deciding Interference-Freeness}\label{appendix:algo}

In this section, we present the detailed proofs of \cref{thm:if-algo}.
The main algorithm is presented in \cref{algo:if-check}.
We begin by introducing the data structure used in the algorithm.

An \emph{Aho-Corasick automaton} is a trie over a set of strings,
augmented by \emph{failure} and \emph{output} links.
For a node $q$ in the trie, let $\str{q}$ denote the concatenation of the edge labels 
on the path from the root to $q$.
There is a failure link from node $q$ to node $v$
if $\str{v}$ is the longest proper suffix of $\str{q}$ that is also a prefix of some pattern in the dictionary.

We next prove the following two lemmas to establish \cref{thm:if-algo}.

\begin{lemma}\label{lem:s-ppref-psuf}
Let $S(w) = (S_1(w), \ldots, S_n(w))$ be the sequence of sets where for each $1 \leq i \leq n$, 
$S_i(w)$ is the set of symbols $c \in \Sigma$ such that 
$\phi(c)$ has an occurrence starting at position $i$ in $w$.
Let $\ppref(w)$ be the set of positions $i$ in  $w$ such that 
prefix $w[1 \ldots i]$ is a proper suffix of some image in $\images{\phi}$.
Let $\psuf(w)$ be the set of positions $i$ in  $w$ such that 
suffix $w[i \ldots n]$ is a proper prefix of some image in $\images{\phi}$.
Then, $S(w)$, $\ppref(w)$, and $\psuf(w)$ can be computed in $O(m + n + \mathit{occ})$ expected time.
\end{lemma}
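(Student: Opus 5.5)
We plan to build one Aho-Corasick automaton and reuse it three times: once on $w$, and once each on the reversals of the images and of $w$ for the proper-suffix and proper-prefix checks. Concretely, build the automaton $\mathcal{A}$ for the dictionary $\images{\phi}$ in $O(m)$ time. Since $\Gamma$ may be large, the goto transitions are stored in a hash table (e.g., FKS-style perfect hashing or dynamic hashing). Each lookup then costs $O(1)$ expected time, so construction, including failure and output links, takes $O(m)$ expected time. Each terminal node records the set of symbols $c$ with $\phi(c)$ equal to its string. If $\phi$ is injective, images of distinct symbols are distinct, so each terminal node carries exactly one symbol.

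\textbf{Computing $S(w)$.} Run $w$ through $\mathcal{A}$. After reading $w[j]$, follow the output-link chain from the current state. Each terminal node reached spells an image $\phi(c)$ occurring at position $j-|\phi(c)|+1$, so we add $c$ to $S_{j-|\phi(c)|+1}(w)$. The scan costs $O(n)$ amortized, via the standard failure-link potential argument. Each output link traversed reports a distinct occurrence, so reporting costs $O(\mathit{occ})$ in total.

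\textbf{Computing $\psuf(w)$.} Position $i$ belongs to $\psuf(w)$ iff $w[i\ldots n]$ is a proper prefix of some image. The key fact is that the state of $\mathcal{A}$ after reading all of $w$ corresponds to the longest suffix of $w$ that is a prefix of some pattern, i.e., a trie node. Every suffix of $w$ that is a pattern prefix is obtained by following failure links from that final state. A node $q$ on this chain gives position $i=n-|\str{q}|+1$. It is a proper prefix of some image iff $q$ is not a leaf, or more generally iff $q$ has a child; this is precomputed per node. The root is excluded, since its string has length $0$ and gives no position. The chain has length at most $n+1$, so this step is $O(n)$.

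\textbf{Computing $\ppref(w)$.} This is symmetric. Build a second automaton $\mathcal{A}^R$ on $\{x^R : x\in\images{\phi}\}$ in $O(m)$ expected time, and run $w^R$ through it. A prefix $w[1\ldots i]$ is a proper suffix of an image iff $w[1\ldots i]^R$, a suffix of $w^R$, is a proper prefix of some reversed image. Apply the same failure-chain procedure and map each length $\ell$ back to the position $i=\ell$.

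\textbf{Main obstacle.} The delicate points are, first, justifying that the failure chain from the final state enumerates exactly all suffixes of $w$ that are pattern prefixes. This follows from the definition of failure links: each failure link gives the longest proper suffix that is still in the trie, and the final state holds the longest such suffix overall. Second, we must account for the hashing needed for the transitions when $\Gamma$ is not constant, which is where the expected-time bound comes from. Everything else is linear bookkeeping, giving $O(m+n+\mathit{occ})$ expected time overall.
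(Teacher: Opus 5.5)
Your proposal is correct and follows essentially the same route as the paper: an Aho--Corasick automaton over $\images{\phi}$ with hashed transitions, scanned over $w$ to obtain $S(w)$; the failure-link chain from the final state with a non-leaf test to obtain $\psuf(w)$; and the analogous automaton on the reversed images, scanned over $w^R$, to obtain $\ppref(w)$. The differences are cosmetic: you bound the failure chain by $O(n)$ where the paper bounds it by $O(\max_{c} |\phi(c)|) \subseteq O(m)$, and your opening line speaks of one automaton reused three times although your argument, like the paper's, uses two.
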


\begin{proof}
After constructing the Aho--Corasick automaton for the set $\images{\phi}$ in $O(m)$ expected time,
using perfect hashing~\cite{conf/focs/1982/fredman} for trie node representation,
we process $w$ through the automaton to compute $S(w)$, which takes $O(n + \mathit{occ})$ time.

To compute $\psuf(w)$,
let $q$ be the node reached after processing the entire $w$.
Then, $\str{q}$ is the longest suffix of $w$ that is also a prefix of some $\phi(c)$ for $c \in \Sigma$.
Let $i$ be the position such that $\str{q} = w[i \ldots n]$.
We have $i \in \psuf(w)$ if and only if $q$ is not a leaf, 
that is, when $\str{q}$ is a proper prefix of $\phi(c)$.
The remaining positions in $\psuf(w)$ can be obtained  
by iteratively following the failure link: 
at each step, we set $q$ to be the node pointed to by the failure link,
check whether $q$ is a leaf, and stop once $q$ becomes the root.
Thus, computing $\psuf(w)$ takes $O(\ell) \subseteq O(m)$ time where
$\ell = \max_{c \in \Sigma} |\phi(c)|$. 

Analogously, $\ppref(w)$ can be computed using the Aho--Corasick automaton for the set $\{ \phi(c)^R : c \in \Sigma \}$
and process $w^R$ through it.
\end{proof}

\begin{algorithm}[t]
\caption{An algorithm for deciding whether $\phi$ is \IF on $\{u\}$.}
\label{algo:if-check}
\Input{an injective morphism $\phi: \Sigma^* \to \Gamma^*$, a word $u \in \Sigma^*$;}
\Output{\texttt{true} if and only if $\phi$ is \IF on $\{u\}$;}
\SetKwFunction{Fproc}{factorizable}
\SetKwProg{Fn}{function}{:}{}
\Fn{\Fproc{$\phi, w$}}{
    compute $S(w)$, $\ppref(w)$, and $\psuf(w)$; \;
    $C[1 \ldots n] \gets 0$;\;
    \lFor{$i \in \ppref(w)$}{$C[i] \gets 1$;
    }
    \For{$i \gets 2 \ldots n$}{
        \If{$C[i-1] = 1$}{
            \lFor{$c \in S_i(w)$}{$C[i + |\phi(c)|] \gets 1$;
            }
        }
    }
    \For{$j \in \psuf(w)$}{
        \lIf{$C[j-1] = 1$}{\Return \texttt{true};
        }
    }
    \Return \texttt{false};
}
$w \gets \phi(u)$;
$\phi^R \gets $ the morphism defined by $\phi^R(c) = \phi(c)^R$ for each $c \in \Sigma$;  \;
\Return  \Not  (\Fproc{$\phi, w$} or  \Fproc{$\phi^R, w^R$});\;
\end{algorithm}

\begin{lemma}\label{lem:factorizable}
In $O(m + n + \mathit{occ})$ expected time, 
function \textup{\texttt{factorizable}} in \cref{algo:if-check}
returns \textup{\texttt{true}} if and only if word $w$ 
admits a factorization  $x \cdot y \cdot z$ such that
$x$ is a (possibly empty) proper suffix of an image, $y$ is a  concatenation of images, and 
$z$ is a non-empty proper prefix of an image.
\end{lemma}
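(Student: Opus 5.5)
The plan is to prove correctness of \texttt{factorizable} through an invariant on the array $C$, and then bound the running time using \cref{lem:s-ppref-psuf}. I read $C[i]$ as a statement about the prefix $w[1 \ldots i]$, with the convention that $C[0] = 1$. This convention stands for the empty prefix: $x = \varepsilon$ together with $y = \varepsilon$. In the pseudocode it is the base case the main loop needs when $i-1 = 0$, i.e., when an image starts at position~$1$. I also read any assignment $C[i + |\phi(c)|]$ with index larger than $n$ as discarded, since that image would run past the end of $w$.

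The invariant I aim for is the following. For every $0 \le t \le n$, after the main loop we have $C[t] = 1$ if and only if $w[1 \ldots t] = x \cdot y$, where $x$ is a (possibly empty) proper suffix of some image and $y \in \images{\phi}^*$. Two points need care in the indexing. First, the occurrence of $\phi(c)$ starting at position $i$ is $w[i \ldots i+|\phi(c)|-1]$, so the index it sets should be read as the end position $i+|\phi(c)|-1$. Second, the check $C[i-1]$ asks whether the prefix ending just before position $i$ is good. I will state the off-by-one reading consistently; the logic does not depend on it.

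\emph{Soundness.} I argue by induction on $t$. If $C[t]$ was set by the initialization, then either $t = 0$ or $t \in \ppref(w)$. In both cases $w[1 \ldots t]$ itself is an admissible $x$, and we take $y = \varepsilon$. Otherwise $C[t]$ was set while processing some position $i$. Then $C[i-1] = 1$, and an image $\phi(c)$ occurs at $i$ and ends at $t$, so we append $\phi(c)$ to $y$. The induction goes through because the loop scans $i$ in increasing order: every assignment writes to an index at least $i$, so $C[i-1]$ is final by the time it is read.

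\emph{Completeness.} Take $w[1 \ldots t] = x \cdot Y_1 \cdots Y_r$ and induct on $r$. For $r = 0$, the initialization covers it. For $r > 0$, the prefix $x \cdot Y_1 \cdots Y_{r-1}$ is marked by induction. Moreover $Y_r = \phi(c)$ occurs at the next position, so $c \in S_i(w)$ for that position $i$, and the loop sets $C[t]$.

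\emph{Final loop.} Given the invariant, the last loop returns \texttt{true} exactly when there is some $j \in \psuf(w)$ with $C[j-1] = 1$. That means $w = (x \cdot y) \cdot z$ with $z = w[j \ldots n]$ a proper prefix of an image. Since $j \le n$, $z$ is non-empty. This is precisely the claimed factorization.

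\emph{Running time.} By \cref{lem:s-ppref-psuf}, computing $S(w)$, $\ppref(w)$ and $\psuf(w)$ takes $O(m+n+\mathit{occ})$ expected time. Initializing $C$ costs $O(n)$. The set $\ppref(w)$ has size at most $\ell \le m$, so marking it costs $O(m)$. The main loop costs $O(n + \sum_i |S_i(w)|) = O(n + \mathit{occ})$, because each element of $S_i(w)$ is one occurrence of an image in $w$. The final loop costs $O(|\psuf(w)|) = O(m)$.

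The only genuinely delicate step is the invariant's indexing: the $C[0]$ base case and the end-position convention. Once those are fixed, everything else is routine induction.
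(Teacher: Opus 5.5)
Your proof is correct and follows the same route as the paper. The paper defines $C[i]=1$ iff $w[1 \ldots i]$ factors as $x \cdot y$, with $x$ a proper suffix of an image and $y$ a concatenation of images, states that correctness follows from this definition, and bounds the time by \cref{lem:s-ppref-psuf} together with $\sum_{i} |S_i(w)| \in O(\mathit{occ})$. Your soundness/completeness induction and your repairs of the pseudocode's indexing make explicit what the paper's ``follows from the definition'' leaves implicit: you write to the end position $i+|\phi(c)|-1$, and you add the base case $C[0]=1$, which also requires the main loop to start at $i=1$ rather than at $i=2$ as printed.
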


\begin{proof}
Let $C$ be a bit array of length $n$ such that $C[i] = 1$ if and only if
$w[1 \ldots i]$ admits a factorization  $x \cdot y$ such that
$x$ is a (possibly empty) proper suffix of an image and $y$ is a  concatenation of images.
The correctness of \textup{\texttt{factorizable}} follows from this definition.

For the time complexity,
after computing $S(w)$, $\ppref(w)$, and $\psuf(w)$  by \cref{lem:s-ppref-psuf} in $O(m + n + \mathit{occ})$ expected time,
the remaining steps also run in 
$O(m + n + \mathit{occ})$ time,
noting that 
$\sum_{i=1}^n |S_i(w)| \in O(\mathit{occ})$.
\end{proof}

\begin{proof}[Proof of \cref{thm:if-algo}]
The correctness and time complexity of \cref{algo:if-check} follow from \cref{lem:factorizable},
with the additional observation that the definition \IF requires at least one of $x$ and $z$ to be non-empty,
hence the two calls to \texttt{factorizable}.
\end{proof}

Note that in the description of the algorithm,
we focus only on the interfered image factorization case in \cref{def:if-morph},
and observe that the inner image factor case can be verified trivially within the stated time complexity.
We also remark that the use of dynamic programming to decide whether a word admits a factorization satisfying a given property
has appeared previously in~\cite{conf/spire/2015/dumitran, conf/cpm/2016/matsuoka}.

\end{document}